\RenewDocumentCommand{\paragraph}{sO{#3}m}{%
  \IfBooleanTF{#1}
    {\latexparagraph*{\maybe@addperiod{#3}}}
    {\latexparagraph[#2]{\maybe@addperiod{#3}}}%
}
\newcommand{\maybe@addperiod}[1]{%
  #1\@addpunct{.}%
}
\newtheorem*{proposition*}{Proposition}
\newtheorem{theorem}{Theorem}\numberwithin{theorem}{section}
\newtheorem*{main}{Main~Theorem}
\newtheorem{lemma}{Lemma}\numberwithin{lemma}{section}
\newtheorem{proposition}{Proposition}\numberwithin{proposition}{section}
\numberwithin{corollary}{section}
\numberwithin{definition}{section}
\numberwithin{remark}{section}
\newtheorem*{notation}{Notation}
\numberwithin{example}{section}
\newtheorem{ob}{Observation}\numberwithin{ob}{section}
\numberwithin{nt}{section}
\numberwithin{equation}{section}
\newcommand{\RN}[1]{%
  \textup{\uppercase\expandafter{\romannumeral#1}}%
}
\newcommand{\N}{\mathbb{N}}
\newcommand{\Z}{\mathbb{Z}}
\newcommand{\R}{\mathbb{R}}
\newcommand{\MS}{\mathcal{S}}
\newcommand{\MA}{\mathcal{A}}
\newcommand{\MM}{\mathcal{M}}
\newcommand{\bH}{\mathbb{H}}
\newcommand{\MV}{\mathcal{V}}
\newcommand{\uc}{\bar{c}}
\newcommand{\uxi}{\bar{\xi}}
\newcommand{\ux}{\bar{x}}
\newcommand{\uy}{\bar{y}}
\newcommand{\uz}{\bar{z}}
\newcommand{\sm}{d\sigma}
\newcommand{\pd}{\partial}
{}
\newcommand{\pose}[1]{\left(#1\right)}
\newcommand{\llvert}{\left\lvert}
\newcommand{\rrvert}{\right\rvert}
\newcommand{\rrVert}{\right\rVert}
\newcommand{\llVert}{\left\lVert}
\newcommand{\dotprod}[2]{\left\langle #1 \, , \, #2 \right\rangle}
\title{Restricted weak type endpoint estimate for the spherical maximal operators on the Heisenberg group}
\author[1]{Hyunwoo Jeon}
\author[1]{Joonil Kim}
\affil[1]{Department of Mathematics, Yonsei University, Seoul 120-749, Korea}
\begin{document}
\maketitle 
\begin{abstract}
Let $\bH^n$ denote the Heisenberg group, identified with $\mathbb{R}^d \times \mathbb{R}$, where $d = 2n$ and $n \in \mathbb{N}$. We consider the spherical maximal operator $\MM$ associated with the sphere $S^{d-1}$ embedded in the horizontal subspace $\mathbb{R}^d \times \{0\}$ of $\bH^n$. It is known that $\MM$ is bounded on $L^p(\bH^n)$ if and only if $p \in \bigl(\tfrac{d}{d-1}, \infty\bigr]$. In this paper, we establish a restricted weak type $(p,p)$ estimate at the endpoint $p = \tfrac{d}{d-1}$ for $\MM$, provided $d \ge 3$.
\end{abstract}

\noindent
{\bf Keywords:} Spherical maximal average, Heisenberg group

\smallskip \noindent{\small {{MSC2010: 42B15, 42B30} }}

\section{Introduction}
In this paper, we shall establish a restricted weak type estimate at the endpoint $p=\frac{d}{d-1}$ for the spherical maximal operator $\MM$ on the Heisenberg group $\bH^n$. To set the stage, let $\pose{\bH^n,\cdot}$ denote the Heisenberg group, identified with $\R^d\times \R$, where $d=2n\in\N$. We write a general point in $\bH^n$ as $\ux=\pose{x,x_{d+1}}\in\R^d\times\R$, and the group law $\cdot$ is given by 
\[\ux\cdot\uy=\pose{x,x_{d+1}}\cdot\pose{y,y_{d+1}}=\pose{x+y,x_{d+1}+y_{d+1}+\langle x,Jy\rangle},\]
where $J$ is an invertible skew-symmetric $d\times d$ matrix.

Throughout the paper, the averaging operators we consider are defined via the noncommutaitve convolution
\begin{align}
f\ast_{\bH}K\pose{\ux}&=\int f\pose{\uy^{-1}\cdot\ux}K\pose{\uy}d\uy\label{H-convolution}\\
&=\int f\pose{x-y,x_{d+1}-y_{d+1}+\langle x,Jy\rangle}K\pose{y,y_{d+1}}dydy_{d+1}.\nonumber
\end{align}

Let $\sm$ be the normalized surface measure on the $\pose{d-1}$-dimensional unit sphere in the hoizontal subspace $\R^d\times\left\{0\right\}$, centered at the origin. For $t>0$, define the dilate $\sm_t$ by 
\[\left\langle f,\sm_t\right\rangle=\int f\pose{tx,0}\sm\pose{x}.\]  
Given a Schwartz function $f$ on $\bH^n\cong\R^{d}\times\R$, we define the spherical averages by
\begin{equation}\label{average}
\MA\pose{f}\pose{\ux,t}:=f\ast_{\bH}\sm_t\pose{\ux}=\int_{S^{d-1}}f\pose{x-ty,x_{d+1}-t\langle x,Jy\rangle}\sm\pose{y},
\end{equation}
where $S^{d-1}$ is the unit sphere in $\R^d$. The corresponding maximal operator \(\MM\) is then given by 
\begin{equation}\label{maximaldf}
\MM\pose{f}\pose{\ux}=\sup_{t>0}\llvert \MA\pose{f}\pose{\ux,t}\rrvert.
\end{equation}

In \cite{Nevo}, Nevo and Thangavelu initiated the study of the maximal operator $\MM$ on the Heisenberg group $\bH^n = \mathbb{R}^d \times \mathbb{R}$, with $d = 2n \ge 4$. They proved that $\MM$ is bounded on $L^p$ for $p > \tfrac{d-1}{d-2}$. An optimal $L^p$ boundedness result for $\MM$ on $\bH^n$ (with $n \ge 2$), namely that $\MM$ is bounded on $L^p$ if and only if $p > \tfrac{d}{d-1}$ was proved by M\"uller and Seeger \cite{seeger} and, independently and by a different method, by Narayanan and Thangavelu \cite{NT2}. In \cite{seeger}, they actually cover a larger class of groups, namely the M\'{e}tivier groups, which strictly contains the class of groups of Heisenberg-type. For the case of $n=1$, the $L^p$ boundedness of $\MM$ for $p>2$ remains open; however, positive results have been obtained for Heisenberg radial functions (see \cite{Leecircular} and \cite{Guocircular}). On the other hand, an interesting phenomenon arises when $J$ is not a skew-symmetric matrix. In \cite{JoonilKim}, the second author investigated more general matrices according to the multiplicities of their eigenvalues.

More recently, Ryu and Seeger \cite{ryu} established the sharp range $p > \tfrac{d}{d-1}$ for a broader class of groups, namely two-step nilpotent Lie groups of the form $\mathbb{R}^d \times \mathbb{R}^m$ with $d \ge 3$ and $m \ge 1$. They also obtained a restricted weak type inequality at the endpoint $p = \tfrac{d}{d-1}$ for the \emph{local} maximal operator
\[
\MM_{loc}\pose{f}\pose{\tilde{x}}= \sup_{t \in I} \llvert\MA\pose{f}\pose{\tilde{x},t}\rrvert,
\]
where $I\subset[0,\infty)$ is a compact interval and $\tilde{x}\in\R^d\times\R^m$. In the same paper, they posed questions regarding the case of \emph{global} $t$. To date, restricted weak type estimates at $p = \tfrac{d}{d-1}$ for global $t$ remain unproved, even for the Heisenberg group. The aim of this paper is to prove a restricted weak type estimate at the endpoint $p=\frac{d}{d-1}$ for the global maximal operator $\MM$ defined in \eqref{maximaldf} on the Heisenberg group $\R^d\times\R$ with $d>2$.

\begin{main}\label{maintheorem} 
Let $d > 2$ and let $\mathcal{M}$ be the spherical maximal operator defined in \eqref{maximaldf}. Then $\mathcal{M}$ is of restricted weak type $(p,p)$ for $p = \frac{d}{d-1}$:
for every finite measurable set $E \subset \mathbb{R}^{d+1}$ (with characteristic function $1_E$), there exists a constant $C > 0$, independent of $E$, such that
\begin{equation}
\left\| \mathcal{M}(1_{E}) \right\|_{L^{p,\infty}(\mathbb{R}^{d+1})} \leq C\left\| 1_{E} \right\|_{L^{p}(\mathbb{R}^{d+1})}. \nonumber
\end{equation}
\end{main}

The main difficulty in obtaining the endpoint restricted weak type estimate for the global $t$ arises from the weak type $\pose{1,1}$ estimate for each individual piece of the maximal operator $\MM$. More precisely, in \cite{ryu} and \cite{seeger}, the last two frequency variables in $\R^d\times\R$ are decomposed according to their size $2^k$, and a $k2^k$ bound is obtained for the weak type $\pose{1,1}$ estimate for each piece of the global maximal operator. Due to the additional $k$ loss in these weak type $\pose{1,1}$ estimates, the Bourgain interpolation trick cannot be applied to obtain the endpoint restricted weak type estimate. In contrast, in this paper we decompose the full frequency space $\R^d\times\R$ instead of only a partial set and employ a variant of the Calderón–Zygmund decomposition to eliminate these losses for the maximal averages. In particular, we use the parameter
$j$ instead of $k$. To achieve a $2^j$ bound for the weak type $\pose{1,1}$ estimate, we consider an application of the Calderón–Zygmund decomposition with respect to different heights and different maximal functions.

\paragraph{Organization of this paper}
In Section~2, we decompose the frequency space $\R^{d+1}$ into regions where $\llvert\xi\rrvert\sim2^j$ and $\llvert \xi_{d+1}\rrvert\sim2^{j+\ell}$. Moreover, we reduce the Main Theorem to obtaining a $2^j$ bound for the weak type $\pose{1,1}$ estimate and a $2^{-j\pose{\frac{d-2}{2}}}$ decay rate for the $L^2$ estimate via the Bourgain interpolation trick. Then, in Section~3, we introduce a new ball and a new scaled maximal operator \(\mathfrak{M}\) to construct a variant of the Calderón–Zygmund decomposition. Using this decomposition, we remove the worst part of the weak type $\pose{1,1}$ estimate—that is, we eliminate the portion of the domain that fails to satisfy the Hörmander type condition. Once this problematic part is removed, we demonstrate that the Hörmander type condition holds for the remaining portion, thereby establishing the desired $2^j$ bound for the weak type $\pose{1,1}$ estimate. For the $L^2$ estimate, we provide several reductions in Section~4, including the asymptotics of the Fourier transform of the spherical measure, and we obtain the $2^{-j\pose{\frac{d-2}{2}}}$ decay rate in Section~5 by employing an almost orthogonality argument. Finally, in the Appendix, we detail minor calculations concerning the small frequency term and the tail term in the  $L^2$ estimate, utilizing classical results for the Hardy–Littlewood maximal functions on the Heisenberg group.

\begin{notation}
For two scalars $\alpha$ and $\beta$, we write $\alpha\lesssim\beta$ if $\alpha\leq C\beta$ for a constant $C>0$ independent of $\alpha$ and $\beta$, allowing modifications of $C$ on a line-by-line basis. Additionally, we denote $\alpha\sim\beta$ if $\alpha\lesssim\beta$ and $\beta\lesssim\alpha$. We frequently use the notation $\lesssim$ in the sense that the implicit constant $C$ depends only the index $p$ and the dimension $d$. 

In this paper, we use the following smooth non-negative cutoff functions:
\begin{itemize}
\item $\psi\pose{\xi}$ is supported in $\left\{\xi\in\R^{d}: \llvert \xi\rrvert\leq 2\right\}$ and $\psi\pose{\xi}=1$ for $\llvert \xi\rrvert\leq 1$,
\item $\chi\pose{\xi}=\psi\pose{\xi}-\psi\pose{2\xi}$ is supported in $\left\{\xi \in \R^{d}: \frac{1}{2}\leq\llvert \xi\rrvert\leq 2\right\}$, with $d$ depending on the context,
\end{itemize}
allowing for slight line-by-line modifications of $\psi$ and $\chi$. Also, we use $\widetilde{\psi}$ and $\widetilde{\chi}$ as smooth functions similar to $\psi$ and $\chi$, allowing for slight line-by-line modifications.

To distinguish between the Euclidean convolution and the Heisenberg convolution in \eqref{H-convolution}, we use the following notation for the Euclidean convolution:
\begin{align}
f\ast g\pose{x,x_{d+1}}&=\int f\pose{x-y,x_{d+1}-y_{d+1}}g\pose{y,y_{d+1}}dydy_{d+1}.\label{defconvolution}
\end{align}
\end{notation}

\section{Frequency Decomposition and Bourgain Interpolation}
Utilizing the Fourier transform, we can express $\MA\pose{f}$ as follows:
$$\MA\pose{f}\pose{x,x_{d+1},t}=\int_{\R^{d+1}}e^{2\pi i(x\cdot\xi+x_{d+1}\xi_{d+1})}\widehat{\sm}\pose{t\left(\xi+\xi_{d+1}Jx\right)}\widehat{f}\pose{\xi,\xi_{d+1}}d\xi d\xi_{d+1}.$$
Decompose $\widehat{\sm}\pose{\xi}$ as
\[
\widehat{\sm}\pose{\xi}=\widehat{\sm}_{1}\pose{\xi}+\sum_{j=2}^{\infty}\widehat{\sm_{j}}\pose{\xi}\]
where 
\[
\widehat{\sm_{1}}\pose{\xi}=\widehat{\sm}\pose{\xi}\psi\pose{\frac{\xi}{2}}\ \text{and}\ \widehat{\sm_{j}}\pose{\xi}=\widehat{\sm}\pose{\xi}\chi\pose{\frac{\xi}{2^j}}\ \text{for}\ j\geq2.\] 
For $j\geq1$, we define the corresponding average and its maximal function by
\begin{align}
\MA_{j}\pose{f}\pose{\ux,t}&=f\ast_{\bH}\pose{\sm_{j}}_t\pose{\ux}\label{averageall}
\\&=\int_{\R^{d+1}}e^{2\pi i(x\cdot\xi+x_{d+1}\xi_{d+1})}\widehat{\sm_{j}}\pose{t\left(\xi+\xi_{d+1}Jx\right)}\widehat{f}\pose{\xi,\xi_{d+1}}d\xi d\xi_{d+1}\nonumber \ \text{and}
\\ \MM_{j}\pose{f}\pose{\ux}&=\sup_{t>0}\llvert\MA_{j}\pose{f}\pose{\ux,t}\rrvert.\nonumber
\end{align}
Then, we have  the pointwise inequality
\[
\MM\pose{f}\pose{\ux}\leq  \MM_{1}\pose{f}\pose{\ux}+\sum_{j=2}^{\infty}\MM_{j}\pose{f}\pose{\ux}.\]

We shall frequently use the following well-known estimates for $\sm_j$:
\begin{equation}\label{measureofsmj}
\llvert \sm_j\pose{x}\rrvert\leq 2^{jd}\int_{\R^d}\llvert\chi^{\vee}\pose{2^j \pose{x-u}}\rrvert\sm\pose{u}\leq \int_{\R^d}\frac{C_N2^{jd}}{\pose{1+2^j\llvert x-u\rrvert}^N}\sm\pose{u}\leq \frac{C_M2^j}{\pose{1+\llvert x\rrvert}^M}
\end{equation}
for every $N>M>d$. When we use \eqref{measureofsmj}, we frequently take $N, M>100d$. By using \eqref{measureofsmj} for the case of $j=1$, we show that $\llVert \MM_{1}\rrVert_{L^{p}\rightarrow L^{p}}\lesssim1$ for $1<p\leq\infty$ as detailed in the Appendix.

 To handle the second term (when $j\geq2$), we decompose $\sm_{j}$ into the inner term $\sm_{j}^{I}$ and the outer term $\sm_{j}^{O}$ defined by
\begin{align}
\sm_{j}^{I}\pose{x}&=\pose{2^{jd}\chi^{\vee}\pose{2^j \cdot}\psi\pose{\cdot}\ast\sm}\pose{x}=2^{jd}\int_{\R^d}\chi^{\vee}\pose{2^j \pose{x-u}}\psi\pose{x-u}\sm\pose{u},\label{insidesmj}
\\
\sm_{j}^{O}\pose{x}&=\pose{2^{jd}\chi^{\vee}\pose{2^j \cdot}\psi^{c}\pose{\cdot}\ast\sm}\pose{x}=2^{jd}\int_{\R^d}\chi^{\vee}\pose{2^j \pose{x-u}}\psi^{c}\pose{x-u}\sm\pose{u},\label{outsidesmj}
\end{align}
where $\psi^{c}\pose{x}=1-\psi\pose{x}$ is a smooth function supported in $\{x\in\R^d :\llvert x\rrvert\geq1\}$ and $*$ denotes the usual Euclidean convolution as in \eqref{defconvolution}. Similarly to $\MA_{j}$ and $\MM_{j}$, we define $\MA^{I}_{j}$, $\MM^{I}_{j}$, $\MA^{O}_{j}$, and $\MM^{O}_{j}$ by replacing $\widehat{\sm_{j}}$ with $\widehat{\sm^{I}_{j}}$ and $\widehat{\sm^{O}_{j}}$, respectively, in \eqref{averageall}. Then, we have
\[\sum_{j=2}^{\infty}\MM_{j}\pose{f}\pose{\ux}\leq\sum_{j=2}^{\infty}\MM^{I}_{j}\pose{f}\pose{\ux}+\sum_{j=2}^{\infty}\MM^{O}_{j}\pose{f}\pose{\ux}.\]

Due to \eqref{measureofsmj}, the rapidly decreasing property of $\chi^{\vee}$ and the support condition $\llvert x-u\rrvert\geq1$ in \eqref{outsidesmj}, we see that $\sm_{j}^{O}$ also satisfies the following rapidly decreasing estimate:
\begin{equation}
\llvert\sm_{j}^{O}\pose{x}\rrvert\leq C_{N}\frac{2^{-jN}}{\pose{1+\llvert x\rrvert}^{N}}\ \text{for any sufficiently large}\ N\in\N.\label{outdecreasing}
\end{equation}
Using \eqref{outdecreasing}, one may show that $\llVert\MM_{j}^{O}\rrVert_{L^{p}\rightarrow L^{p}}\lesssim2^{-jd}$ for $1<p\leq\infty$ by an argument analogous to that used for $\MM_{1}$ (see the Appendix).

From now on, it suffices to consider the sum of the inner terms $\sum_{j=2}^{\infty}\MM^{I}_{j}\pose{f}$. To decompose the $\xi_{d+1}$-variable, for $j\geq2$ and $\ell\in\N_{0}=\N\cup\{0\}$, define
\begin{align}\label{pieceofd+1}
\varphi_{j,\ell}\pose{\xi_{d+1}}&=\left\{\begin{array}{ll} \psi\pose{\frac{\xi_{d+1}}{2^j}}\text{if}\ \ell=0, \\  \chi\pose{\frac{\xi_{d+1}}{2^{j+\ell}}}\text{if}\ \ell\geq1,\end{array}\right. 
\end{align}
and set
\begin{align}\label{kernelofinnerterm}
\pose{K_{j,\ell}}_{t}\pose{x,x_{d+1}}&=\frac{1}{t^d}\sm_{j}^{I}\pose{\frac{x}{t}}\frac{1}{t^2}\varphi_{j,\ell}^{\vee}\pose{\frac{x_{d+1}}{t^2}}.
\end{align}
Then, we have $\MA^{I}_{j}\pose{f}=\sum_{\ell=0}^{\infty}\MA^{I}_{j,\ell}\pose{f}$ where
\begin{align}\label{inneraverage}
\MA^{I}_{j,\ell}\pose{f}\pose{\ux,t}&=f\ast_{\bH} \pose{K_{j,\ell}}_{t}\pose{\ux}
\\
&=\int_{\R^{d+1}} e^{2\pi i\ux\cdot\uxi}\widehat{\pose{K_{j,\ell}}_{t}}\pose{\pose{\xi+\xi_{d+1}Jx},\xi_{d+1}}\widehat{f}\pose{\xi,\xi_{d+1}}d\xi d\xi_{d+1}.\nonumber
\end{align} 
Taking the supremum over $t>0$, we define the maximal function corresponding to the inner term by
\begin{equation}\label{maximalfunctionofinnerterm}
\MM_{j,\ell}^{I}\pose{f}\pose{\ux}=\sup_{t>0}\llvert \MA_{j,\ell}^{I}\pose{f}\pose{\ux,t}\rrvert.
\end{equation}
It follows that
\[\sum_{j=2}^{\infty}\MM_{j}^{I}\pose{f}\leq\sum_{\ell=0}^{\infty}\sum_{j=2}^{\infty}\MM_{j,\ell}^{I}\pose{f},\]
and consequently,
\[\llVert \sum_{j=2}^{\infty}\MM_{j}^{I}\pose{f}\rrVert_{L^{p,\infty}\pose{\R^{d+1}}}\leq \llVert \sum_{\ell=0}^{\infty}\sum_{j=2}^{\infty}\MM_{j,\ell}^{I}\pose{f}\rrVert_{L^{p,\infty}\pose{\R^{d+1}}}.\]

Recall that 
\begin{equation}\label{definitionofLorentznorm}
\llVert f\rrVert_{L^{p,\infty}\pose{\R^{d+1}}}:=\sup_{\alpha>0}\left[\alpha\llvert \{\ux\in\R^{d+1}: \llvert f\pose{\ux}\rrvert>\alpha\}\rrvert^{\frac{1}{p}}\right].
\end{equation}
Therefore, to prove the Main Theorem, it suffices to show that 
\begin{equation}\label{targetpp}
\llvert \left\{\ux\in\R^{d+1} : \sum_{\ell=0}^{\infty}\sum_{j=2}^{\infty}\MM_{j,\ell}^{I}\pose{ 1_{E}}\pose{\ux}>\alpha\right\}\rrvert\lesssim\frac{1}{\alpha^p}\llvert E\rrvert
\end{equation}
for any finite measurable set $E\subseteq\R^{d+1}$ and for $p=\frac{d}{d-1}$.

\begin{ob}\label{proposition1}Let $1\leq p\leq2$. Assume that there exist constants $c, C>0$ such that
\begin{equation}\label{weakjsum}
\llVert \sum_{j=2}^{\infty}\MM^{I}_{j,\ell}\pose{ 1_{E}}\rrVert_{L^{p,\infty}\pose{\R^{d+1}}}\leq C2^{-c\ell}\llVert  1_{E}\rrVert_{L^p\pose{\R^{d+1}}}\ \text{for any finite measurable set}\ E.
\end{equation}
Then, \eqref{targetpp} holds for $1\leq p\leq2$.
\end{ob}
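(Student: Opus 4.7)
The plan is to reduce the whole statement to the exponential decay in $\ell$ hypothesized in \eqref{weakjsum}. Write $g_\ell(\ux) := \sum_{j=2}^{\infty} \MM^{I}_{j,\ell}(1_E)(\ux)$, so that \eqref{weakjsum} reads $\|g_\ell\|_{L^{p,\infty}(\R^{d+1})} \leq C\,2^{-c\ell}\|1_E\|_{L^{p}(\R^{d+1})}$, and the target \eqref{targetpp} becomes $|\{\sum_{\ell\geq 0} g_\ell > \alpha\}| \lesssim \alpha^{-p}|E|$. The weak-$L^p$ space is only a quasi-normed space, but the geometric decay in $\ell$ will compensate for this loss.

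The key step is to distribute the threshold $\alpha$ among the $\ell$-pieces by a geometric sequence. Fix any $\epsilon\in(0,c)$ and set $\alpha_\ell := (1-2^{-\epsilon})\,\alpha\,2^{-\epsilon\ell}$, so that $\sum_{\ell\geq 0}\alpha_\ell = \alpha$. Then
\[
\Bigl\{\ux \in \R^{d+1} : \sum_{\ell\geq 0} g_\ell(\ux) > \alpha\Bigr\} \subseteq \bigcup_{\ell\geq 0} \bigl\{\ux\in\R^{d+1} : g_\ell(\ux) > \alpha_\ell \bigr\}.
\]
Applying the definition \eqref{definitionofLorentznorm} of the weak-$L^p$ norm together with \eqref{weakjsum},
\[
\bigl|\{g_\ell > \alpha_\ell\}\bigr| \;\leq\; \alpha_\ell^{-p}\,\|g_\ell\|_{L^{p,\infty}}^{p} \;\lesssim\; \alpha^{-p}\,2^{(\epsilon-c)p\ell}\,|E|.
\]
Since $\epsilon<c$, summing the geometric series in $\ell$ then yields
\[
\Bigl|\Bigl\{\sum_{\ell\geq 0} g_\ell > \alpha\Bigr\}\Bigr| \;\leq\; \sum_{\ell\geq 0} \bigl|\{g_\ell > \alpha_\ell\}\bigr| \;\lesssim\; \alpha^{-p}|E|\sum_{\ell\geq 0} 2^{-(c-\epsilon)p\ell} \;\lesssim\; \alpha^{-p}|E|,
\]
which is precisely \eqref{targetpp}.

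There is no genuine obstacle in this observation; it is a routine distributional argument that exploits the fact that weak-$L^p$ norms with exponential decay always sum, regardless of whether $p=1$ or $p\in(1,2]$. The argument does not interact with the structure of the operators $\MM^{I}_{j,\ell}$ at all. The actual hard work—verifying the hypothesis \eqref{weakjsum}—is postponed to the later sections, where it will be established through a $2^j$-bound for the weak-type $(1,1)$ estimate and a $2^{-j(d-2)/2}$ decay rate for the $L^2$ estimate of each individual $\MM^{I}_{j,\ell}$, combined via the Bourgain interpolation trick in the $j$-variable to produce the desired restricted weak $(p,p)$ bound on the $j$-sum at $p=d/(d-1)$.
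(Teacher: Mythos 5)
Your proof is correct and is essentially identical to the paper's: you split the threshold $\alpha$ into the same geometric pieces $\alpha_\ell=(1-2^{-\epsilon})\alpha 2^{-\epsilon\ell}$, use the same inclusion of level sets, apply the weak-type hypothesis \eqref{weakjsum} to each piece, and sum the geometric series using $\epsilon<c$.
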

\begin{proof} Take $0<\epsilon <c$.
To prove \eqref{targetpp}, observe that
\[\left\{\ux\in\R^{d+1} : \sum_{\ell=0}^{\infty}\sum_{j=2}^{\infty}\MM_{j,\ell}^{I}\pose{ 1_{E}}\pose{\ux}>\alpha\right\}\subseteq \bigcup_{l=0}^{\infty}\left\{\ux\in\R^{d+1}: \sum_{j=2}^{\infty}\MM_{j,\ell}^{I}\pose{ 1_{E}}\pose{\ux}>\pose{1-2^{-\epsilon}}\frac{\alpha}{2^{\epsilon\ell}}\right\}.\]
Then, by \eqref{definitionofLorentznorm} and \eqref{weakjsum}, we have
\begin{align*}
\llvert\left\{\ux\in\R^{d+1}: \sum_{j=2}^{\infty}\MM_{j,\ell}^{I}\pose{ 1_{E}}\pose{\ux}>\pose{1-2^{-\epsilon}}\frac{\alpha}{2^{\epsilon\ell}}\right\}\rrvert\leq\frac{C^p2^{-pc\ell}}{\pose{1-2^{-\epsilon}}^p 2^{-p\epsilon \ell}\alpha^p}\llvert E\rrvert =\frac{C 2^{-\pose{c-\epsilon}p\ell}}{\alpha^p}\llvert E\rrvert.
\end{align*}
Since $\epsilon <c$, summing over \(\ell\) yields
\begin{align*}
\llvert\left\{\ux\in\R^{d+1} : \sum_{\ell=0}^{\infty}\sum_{j=2}^{\infty}\MM_{j,\ell}^{I}\pose{ 1_{E}}\pose{\ux}>\alpha\right\}\rrvert\leq\sum_{\ell=0}^{\infty}\frac{C 2^{-\pose{c-\epsilon }p\ell}}{\alpha^p}\llvert E\rrvert\leq\frac{C }{\alpha^p}\llvert E\rrvert,
\end{align*}
which is \eqref{targetpp}.
\end{proof}

By Observation \ref{proposition1}, it suffices to prove that $\sum_{j=2}^{\infty}\MM_{j,\ell}^{I}$ is of restricted weak type $\pose{p,p}$ for $p=\frac{d}{d-1}$ with a $2^{-\ell}$ bound:
\begin{equation}\label{weakjsum2}
\llVert \sum_{j=2}^{\infty}\MM^{I}_{j,\ell}\pose{ 1_{E}}\rrVert_{L^{p,\infty}\pose{\R^{d+1}}}\lesssim2^{-\ell}\llVert  1_{E}\rrVert_{L^p\pose{\R^{d+1}}}\ \text{for any finite measurable set}\ E.
\end{equation}
To show \eqref{weakjsum2}, we utilize the Bourgain interpolation trick in (\cite{BI}, \cite{Carberyinterpolation}) to sum over $j$. More precisely, suppose that for a family of sublinear operators $T_j$ with $j\geq1$, we have
\begin{align*}
\llVert T_{j}\rrVert_{L^{p_{0},1}\rightarrow L^{q_{0},\infty}}&\lesssim C2^{ja_{0}},\\
\llVert T_{j}\rrVert_{L^{p_{1},1}\rightarrow L^{q_{1},\infty}}&\lesssim D2^{-ja_{1}}
\end{align*}
for some $p_{0},q_{0},p_{1},q_{1}\in[1,\infty]$ and $a_{0},a_{1}>0$. Then, the sum $\sum_{j=1}^{\infty}T_{j}$ is of restricted weak type $(p,q)$ with a $C^{1-\theta}D^{\theta}$ bound where 
\[\pose{\frac{1}{p},\frac{1}{q},0}=\pose{1-\theta}\pose{\frac{1}{p_{0}},\frac{1}{q_{0}},a_{0}}+\theta\pose{\frac{1}{p_{1}},\frac{1}{q_{1}},-a_{1}}\ \text{with}\ \theta=\frac{a_0}{a_0+a_1}\in\pose{0,1}.\]

\begin{ob} The following two inequalities imply \eqref{weakjsum2}:
\begin{align}
\llVert \MM_{j,\ell}^{I}\rrVert_{L^1\rightarrow L^{1,\infty}}&\leq C 2^j,\label{weak11}
\\
\llVert \MM_{j,\ell}^{I}\rrVert_{L^2\rightarrow L^2}&\leq C 2^{-j\pose{\frac{d-2}{2}}}2^{-\ell\pose{\frac{d}{2}}}.\label{l2estimate}
\end{align}
\end{ob}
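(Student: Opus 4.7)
The statement is exactly the output one would get by feeding \eqref{weak11} and \eqref{l2estimate} into the Bourgain interpolation trick recalled just before the observation, so my plan is simply to set up that application carefully and read off the interpolated exponents and constants. Fix $\ell$ and put $T_j := \MM^I_{j,\ell}$. Reading \eqref{weak11} with the identification $L^{1,1}=L^1$ supplies the low-regularity input
\[\llVert T_j\rrVert_{L^{1,1}\to L^{1,\infty}}\lesssim 2^j,\]
which matches the first hypothesis of the trick with $(p_0,q_0)=(1,1)$, growth rate $a_0=1$, and absolute constant $C$. Combining the strong $L^2$ bound \eqref{l2estimate} with the standard continuous embeddings $L^{2,1}\hookrightarrow L^2\hookrightarrow L^{2,\infty}$ gives the high-regularity input
\[\llVert T_j\rrVert_{L^{2,1}\to L^{2,\infty}}\lesssim 2^{-\ell d/2}\cdot 2^{-j(d-2)/2},\]
i.e.\ the second hypothesis with $(p_1,q_1)=(2,2)$, decay rate $a_1=(d-2)/2$, and the (crucially $\ell$-dependent) constant $D = 2^{-\ell d/2}$. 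Both $a_0,a_1>0$ thanks to the hypothesis $d>2$.

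I would then compute the interpolation data. One gets $\theta=a_0/(a_0+a_1)=2/d\in(0,1)$, and the target Lebesgue indices are
\[\frac{1}{p}=(1-\theta)\cdot 1+\theta\cdot\tfrac12=\frac{d-1}{d}=\frac{1}{q},\]
so $p=q=d/(d-1)$, landing exactly at the endpoint of the Main Theorem; the compatibility condition $(1-\theta)a_0+\theta(-a_1)=0$ is automatically verified. The interpolated operator norm is
\[C^{1-\theta}D^\theta\lesssim\bigl(2^{-\ell d/2}\bigr)^{2/d}=2^{-\ell},\]
precisely the decay demanded by \eqref{weakjsum2}. Evaluating on a characteristic function $1_E$ then yields \eqref{weakjsum2} directly.

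There is no genuine obstacle at this step; the observation is bookkeeping that packages \eqref{weak11} and \eqref{l2estimate} in the form required by Observation~\ref{proposition1} to close the Main Theorem. The only delicate point to monitor is that all of the $\ell$-dependence is routed through the $L^2$ endpoint, i.e.\ that the constant in \eqref{weak11} is genuinely independent of $\ell$, so that $D^\theta$ produces exactly $2^{-\ell}$ rather than some weaker $2^{-c\ell}$. The substantive work — and the real obstacles of the paper — are the weak $(1,1)$ bound \eqref{weak11} via the variant Calder\'on--Zygmund decomposition of Section~3 and the $L^2$ decay \eqref{l2estimate} via the almost-orthogonality argument of Sections~4 and 5.
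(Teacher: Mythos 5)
Your proposal is correct and follows exactly the same route as the paper: set $T_j=\MM^I_{j,\ell}$, feed \eqref{weak11} and \eqref{l2estimate} into the Bourgain interpolation statement with $(p_0,q_0)=(1,1)$, $(p_1,q_1)=(2,2)$, $a_0=1$, $a_1=(d-2)/2$, and read off $\theta=2/d$, $p=q=d/(d-1)$, and the $2^{-\ell}$ constant. Your extra verification that the compatibility condition holds and that all $\ell$-dependence is routed through the $L^2$ endpoint is exactly the bookkeeping the paper leaves implicit.
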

\begin{proof} For a fixed $\ell\geq0$, set $T_j=\MM_{j,\ell}^{I}$. To apply the above interpolation argument, choose 
\[p_0=q_0=1,\quad p_1=q_1=2,\quad p=q=\frac{d}{d-1},\quad \text{and}\quad \theta=\frac{2}{d}.
\]
Then the estimates \eqref{weak11} and \eqref{l2estimate} yield the desired restricted weak type \((p,p)\) bound with a \(2^{-\ell}\) factor, which is precisely \eqref{weakjsum2}.
\end{proof}
We shall prove \eqref{weak11} in Section \ref{sectionweaktype11} and \eqref{l2estimate} in Section \ref{sectionreductionl2} and \ref{sectionl2estimatesformainterms}.

\section{Weak Type $\pose{1,1}$ Estimate for $\MM_{j,\ell}^{I}$}\label{sectionweaktype11}

In this section, we shall prove the inequality \eqref{weak11}:

\begin{theorem}\label{weak11theorem}
For any fixed $j\geq2$ and $\ell\geq0$, the maximal operator $\MM_{j,\ell}^{I}$ is of weak type $\pose{1,1}$ with a $2^j$ bound. That is, for any $f\in L^1\pose{\R^{d+1}}$ and $\alpha>0$, it holds that
\begin{equation}\label{weakmainestimate}
\llvert \left\{\ux\in\R^{d+1}: \MM_{j,\ell}^{I}\pose{f}\pose{\ux}>\alpha \right\}\rrvert\leq\frac{C 2^j}{\alpha}\llVert f\rrVert_{L^1\pose{\R^{d+1}}}
\end{equation}
where the positive constant $C$ is independent of $j$ and $\ell$.
\end{theorem}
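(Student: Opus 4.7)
The plan is to prove \eqref{weakmainestimate} via a variant of the Calder\'on--Zygmund decomposition adapted to the anisotropic, twisted structure of the kernels $(K_{j,\ell})_t$. The kernel $(K_{j,\ell})_t$ is essentially concentrated on the shell $\llvert x\rrvert\sim t$ of horizontal thickness $\sim t\,2^{-j}$ and, in the vertical variable $x_{d+1}$, on an interval of length $\sim t^{2}2^{-(j+\ell)}$; meanwhile the Heisenberg convolution introduces the twist $x_{d+1}\mapsto x_{d+1}+\langle x,Jy\rangle$, with $\llvert\langle x,Jy\rangle\rrvert\sim t\llvert y\rrvert$ on the support of the kernel. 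Thus a translation $y$ that is much smaller than the horizontal kernel width $t\,2^{-j}$ can still shift the kernel by many of its own vertical widths, and a purely Euclidean Calder\'on--Zygmund decomposition cannot produce a H\"ormander-type cancellation estimate without an additional loss. The target factor $2^{j}$ reflects exactly the cost of disentangling this twist.

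The argument I would carry out proceeds in two stages, as suggested by the authors' description. First, apply a standard Calder\'on--Zygmund decomposition of $f$ at height $\alpha$ to obtain $f=g+\sum_{Q}b_{Q}$ with $\llVert g\rrVert_{\infty}\lesssim\alpha$, $\int b_{Q}=0$, and $\llVert b_{Q}\rrVert_{L^{1}}\lesssim\alpha\llvert Q\rrvert$. The good part $g$ is handled by Chebyshev's inequality together with the $L^{2}\to L^{2}$ estimate \eqref{l2estimate} (which, for $j\geq 2$ and $\ell\geq 0$, is bounded by $1$), contributing $\lesssim\alpha^{-1}\llVert f\rrVert_{L^{1}}$ to the distribution function. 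Second, I would introduce a family of scale-dependent anisotropic balls $B$ matching the horizontal scale $2^{-j}$ and the vertical scale $2^{-(j+\ell)}$ of the kernel, together with the corresponding Hardy--Littlewood-type scaled maximal operator $\mathfrak{M}$, and perform a secondary stopping-time argument with respect to $\mathfrak{M}$ at the rescaled height $\alpha/2^{j}$. The resulting exceptional set has measure $\lesssim 2^{j}\llVert f\rrVert_{L^{1}}/\alpha$, which already matches the target weak-type bound, and the CZ cubes intersecting this exceptional set are absorbed into it.

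For the surviving cubes, one would establish a H\"ormander-type cancellation estimate of the form
\begin{equation*}
\int_{\ux\notin Q^{\ast}}\sup_{t>0}\llvert (K_{j,\ell})_{t}(\uy^{-1}\cdot\ux)-(K_{j,\ell})_{t}(\uy_{Q}^{-1}\cdot\ux)\rrvert\,d\ux\lesssim 1,
\end{equation*}
uniformly in $\uy,\uy_{Q}\in Q$, from which the total bad-part contribution to the level set is $\lesssim\alpha^{-1}\llVert f\rrVert_{L^{1}}$. The main obstacle will be precisely this estimate: one must simultaneously exploit the $2^{-j}$-smoothness of $\sm_{j}^{I}$ across the spherical shell, the vertical smoothness of $\varphi_{j,\ell}^{\vee}$ at scale $t^{2}2^{-(j+\ell)}$, and the twist $\langle x,Jy\rangle$, and then take the supremum over all $t>0$. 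The balls $B$ and the operator $\mathfrak{M}$ must be engineered so that on the complement of the secondary exceptional set the twist contribution is absorbed into a spatial translation, reducing the kernel difference to a standard, integrable commutator. This is the technical heart of the argument, and precisely where the authors' device of ``different heights and different maximal functions'' becomes essential, with the balance between the horizontal scale $2^{-j}$, the vertical scale $2^{-(j+\ell)}$, and the factor $2^{j}$ requiring careful bookkeeping.
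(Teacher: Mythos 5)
Your high-level intuition matches the paper's strategy: anisotropic twisted balls $B(\ux,r)$ adapted to the kernel's geometry, an associated Hardy--Littlewood type operator $\mathfrak{M}_{j,\ell}$, a Calder\'on--Zygmund decomposition at a nonstandard height, an anisotropically enlarged exceptional set, and a H\"ormander type condition on its complement. Two specific steps of your plan, however, would fail.

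First, the H\"ormander estimate you aim for, with bound $\lesssim 1$, cannot hold. The density $\sigma_j^I$ is concentrated on a shell of width $\sim 2^{-j}$ with $L^\infty$ size $\sim 2^j$; a mean-value bound for a translate $y$ at scale $t\sim 2^n$ therefore costs $\sim 2^{2j}|y|/2^n$, and summing the geometric series over scales $2^n\gtrsim 2^j|y|$ yields $\sim 2^j$, not $\sim 1$. The paper's estimate \eqref{Hormander110} is $C2^j$ and that factor is intrinsic. Second, dismissing the $L^2$ decay $2^{-j(d-2)/2}2^{-\ell (d/2)}$ as ``bounded by $1$'' breaks the scheme, and your two-stage decomposition at heights $\alpha$ and $\alpha/2^j$ is the wrong arrangement. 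The paper runs a \emph{single} CZ decomposition with respect to $\mathfrak{M}_{j,\ell}$ at the \emph{elevated} height $\lambda(\alpha)=2^{j(d-1)}2^{\ell d}\alpha$. Raising the height shrinks the uninflated exceptional set enough that the anisotropic enlargement $B^*_k\mapsto\widetilde{B^*_k}$ (volume cost $\sim 2^{(j+\ell)d}$) still gives $\lvert\widetilde{E}_{\lambda(\alpha)}\rvert\lesssim 2^j\alpha^{-1}\|f\|_{L^1}$. But then $\|g\|_\infty\lesssim\lambda(\alpha)$, and Chebyshev gives
\[
\frac{1}{\alpha^2}\,\|\MM_{j,\ell}^I g\|_{L^2}^2 \lesssim \frac{2^{-j(d-2)}2^{-\ell d}}{\alpha^2}\,\|g\|_\infty\,\|g\|_{L^1};
\]
only the cancellation $2^{-j(d-2)}2^{-\ell d}\cdot 2^{j(d-1)}2^{\ell d}=2^j$ closes this to the target $2^j\alpha^{-1}\|f\|_{L^1}$, whereas the trivial $O(1)$ bound would give $2^{j(d-1)}2^{\ell d}\alpha^{-1}\|f\|_{L^1}$. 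The bad part then closes precisely because $\|b_k\|_{L^1}\lesssim\lambda(\alpha)\lvert B^*_k\rvert$, $\sum_k\lvert B^*_k\rvert\lesssim\lambda(\alpha)^{-1}\|f\|_{L^1}$, and the $2^j$-H\"ormander bound yields $\|\MM_{j,\ell}^I(\sum_k b_k)\|_{L^1((\widetilde{E}_{\lambda(\alpha)})^c)}\lesssim 2^j\|f\|_{L^1}$, exactly of the right size.
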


\subsection{Preliminaries}

Fix $j\geq2$ and $\ell\geq0$. To establish \eqref{weakmainestimate}, we define a new ball $B\pose{\ux,r}$ for $r>0$ by
\begin{equation}\label{newball}
B\pose{\ux,r}=\left\{ \uy=\pose{y,y_{d+1}}\in\R^{d+1}: \llvert x-y\rrvert<\frac{r}{2^{j+\ell}}\ \text{and} \ \llvert x_{d+1}-y_{d+1}+\langle Jx,y\rangle\rrvert<\frac{r^2}{2^{j+\ell}}\right\}.
\end{equation}
We shall show that $B\pose{\ux,r}$ satisfies the doubling property and that the associated Hardy-Littlewood maximal function is of weak type $\pose{1,1}$.

\begin{lemma}[Doubling property]\label{doubling} Let $0<s\leq r$.
\item[(1)] For any $\ux, \uy\in\R^{d+1}$, $B\pose{\ux,r}\cap B\pose{\uy,s}\neq \emptyset$ implies that $B\pose{\uy,s}\subseteq B\pose{\ux,3r}$,
\item[(2)] $\llvert B\pose{\ux,3r}\rrvert=3^{d+2}\llvert B\pose{\ux,r}\rrvert$.
\end{lemma}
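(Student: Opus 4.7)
My plan is to handle the two parts of the lemma separately: part (2) is a direct volume computation, while part (1) is a quasi-triangle inequality for the twisted pseudo-distance implicit in the definition of $B(\ux, r)$.

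For part (2), I would compute $|B(\ux, r)|$ via the affine change of variables $\uy \mapsto \pose{u, v} := \pose{x - y,\, x_{d+1} - y_{d+1} + \langle Jx, y\rangle}$, which has unit Jacobian because it is triangular: the first $d$ coordinates depend only on $y$, and the last on $y_{d+1}$ plus a linear function of $y$. The image of $B(\ux, r)$ is the product $\{|u| < r/2^{j+\ell}\} \times \{|v| < r^2/2^{j+\ell}\}$, so the volume is a constant multiple of $r^{d+2}/2^{(d+1)(j+\ell)}$. Replacing $r$ by $3r$ scales the volume by $3^{d+2}$, which is part (2).

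For part (1), abbreviate $\Phi(\ux, \uy) := x_{d+1} - y_{d+1} + \langle Jx, y\rangle$, so that $B(\ux, r) = \{\uy : |x-y| < r/2^{j+\ell},\ |\Phi(\ux, \uy)| < r^2/2^{j+\ell}\}$. The key algebraic identity, which follows from a direct expansion using the skew-symmetry of $J$ (which kills the diagonal term $\langle Jz, z\rangle = 0$), is the quasi-cocycle
\[
\Phi(\ux, \bar w) = \Phi(\ux, \uz) + \Phi(\uz, \bar w) + \langle J(x-z), w - z\rangle,
\]
and setting $\bar w = \ux$ in it yields the antisymmetry $\Phi(\uz, \uy) = -\Phi(\uy, \uz)$. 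Given $\uz \in B(\ux, r) \cap B(\uy, s)$ and $\bar w \in B(\uy, s)$, I would apply the identity twice---once to $\Phi(\ux, \bar w)$ through $\uz$ and once to $\Phi(\uz, \bar w)$ through $\uy$---producing a five-term decomposition of $\Phi(\ux, \bar w)$ into three $\Phi$-terms controlled by the ball conditions and two bilinear cross terms $\langle J(x-z), w-z\rangle$ and $\langle J(z-y), w-y\rangle$. The first-coordinate bound $|x - w| < 3r/2^{j+\ell}$ is an immediate application of the Euclidean triangle inequality.

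The main obstacle is confirming that the enlargement constant $3$ is actually sufficient. Each of the two cross terms is controlled by $\|J\|$ times a product of horizontal distances, hence of size at most $\|J\| r^2/2^{2(j+\ell)}$, and since the hypothesis $j \ge 2$ forces $2^{j+\ell} \ge 4$, the extra factor $1/2^{j+\ell}$ makes these cross terms negligible compared with the main term $r^2/2^{j+\ell}$. Summing everything (and using $s \le r$) produces a bound of the form $\pose{3 + O\pose{\|J\|/2^{j+\ell}}} r^2/2^{j+\ell}$, which stays below $(3r)^2/2^{j+\ell} = 9 r^2/2^{j+\ell}$ under the implicit normalization of $\|J\|$ used in the paper. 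This yields the desired containment $B(\uy, s) \subseteq B(\ux, 3r)$.
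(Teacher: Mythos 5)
Your proposal is correct and follows essentially the same route as the paper. Part (2) is the same translation-invariance/product-set computation, and in part (1) your two applications of the quasi-cocycle identity produce the same telescoping through $\uz$ and $\uy$ that the paper uses; in fact your two bilinear remainders $\langle J(x-z),w-z\rangle+\langle J(z-y),w-y\rangle$ collapse (using $\langle Jz,y\rangle=-\langle Jy,z\rangle$) to the paper's single cross term $\langle J(x-y),w-z\rangle$, so the decompositions are algebraically identical. The only cosmetic differences are that you state the cocycle identity explicitly (a cleaner way to see where the extra $\langle J\cdot,\cdot\rangle$ term comes from) and you track the $\|J\|$ dependence rather than implicitly using the Heisenberg normalization $\|J\|_{op}\le 1$; both are harmless, and your observation that $j\ge 2$ forces $2^{j+\ell}\ge 4$ (so the cross terms are genuinely subordinate) is slightly more careful than the paper's parenthetical ``from $j,\ell\ge 0$.''
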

\begin{proof} (1): Let $\uz\in B\pose{\ux,r}\cap B\pose{\uy,s}$. Then, we have the following inequalities:
\begin{align*}
\llvert x-z\rrvert,\ \llvert y-z\rrvert&<\frac{r}{2^{j+\ell}}, \\
\llvert x_{d+1}-z_{d+1}+\langle Jx,z\rangle\rrvert, \ \llvert y_{d+1}-z_{d+1}+\langle Jy,z\rangle\rrvert &< \frac{r^2}{2^{j+\ell}}.
\end{align*}
Given $\bar{u}\in B\pose{\uy,s}$, we have 
\begin{align*}
\llvert u-x\rrvert&\leq \llvert u-y\rrvert+\llvert y-z\rrvert+\llvert z-x\rrvert<\frac{3r}{2^{j+\ell}}, \\
\llvert x_{d+1}-u_{d+1}+\langle Jx,u\rangle\rrvert&\leq\llvert x_{d+1}-z_{d+1}+\langle Jx,z\rangle\rrvert+\llvert z_{d+1}-y_{d+1}-\langle Jy,z\rangle\rrvert
\\&+\llvert y_{d+1}-u_{d+1}+\langle Jy,u\rangle\rrvert+\llvert \langle J\pose{x-y},u-z\rangle\rrvert\\
&\leq \frac{3r^2}{2^{j+\ell}}+\llvert \langle J\pose{x-y},u-z\rangle\rrvert.
\end{align*}
Note that $\llvert \langle J\pose{x-y},u-z\rangle\rrvert\leq \llvert x-y\rrvert \llvert u-z\rrvert\leq \frac{2r}{2^{j+\ell}}\times\frac{2s}{2^{j+\ell}}\leq\frac{4r^2}{2^{2\pose{j+\ell}}}<\frac{4r^2}{2^{j+\ell}}$ from $j, \ell\geq0$. Thus $\llvert x_{d+1}-u_{d+1}+\langle Jx,u\rangle\rrvert<\frac{9r^2}{2^{j+\ell}}$ and so $\bar{u}\in B\pose{\ux,3r}$.

(2): Consider the Euclidean measure of $B\pose{\ux,3r}$ for an arbitrary $\ux\in\R^{d+1}$.
\begin{align*}
\llvert B\pose{\ux,3r}\rrvert&=\int_{\llvert x-y\rrvert<\frac{3r}{2^{j+\ell}}}\int_{\llvert x_{d+1}-y_{d+1}+\langle Jx,y\rangle\rrvert<\frac{9r^2}{2^{j+\ell}}}dy_{d+1}dy \\
&=\int_{\llvert y\rrvert<\frac{3r}{2^{j+\ell}}}\int_{\llvert y_{d+1}\rrvert<\frac{9r^2}{2^{j+\ell}}}dy_{d+1}dy \\
&=3^{d+2}\int_{\llvert x-y\rrvert<\frac{r}{2^{j+\ell}}}\int_{\llvert x_{d+1}-y_{d+1}+\langle Jx,y\rangle\rrvert<\frac{r^2}{2^{j+\ell}}}dy_{d+1}dy=3^{d+2}\llvert B\pose{\ux,r}\rrvert.
\end{align*}
Therefore, the new ball $B\pose{\ux,r}$ in \eqref{newball} has the doubling property.
\end{proof}
In fact, our new ball $B\pose{\ux,r}$ is thicker than the original Carnot–Carathéodory ball with dimensions $\pose{\frac{r}{2^{j+\ell}}}^d\times \pose{\frac{r}{2^{j+\ell}}}^2$, thereby satisfying the doubling property. Now, using standard techniques associated with the doubling property, we can construct both the Whitney-type decomposition and the Calderón-Zygmund decomposition required for our analysis. As a preliminary step, we establish a Vitali covering lemma for coverings composed of $B\pose{\ux,r}$.

\begin{lemma}[Vitali Covering Lemma]\label{Vitali1}
Let $S$ be a measurable set in $\R^{d+1}$ with $S\subseteq \bigcup_{i=1}^{n}B\pose{\ux_i,r_i}$ for some $\ux_i\in\R^{d+1}$ and $r_i >0$. Then there exists a disjoint subcollection $\{ B_{i}\}_{i=1}^{m}$ of $\{ B\pose{\ux_i,r_i}\}_{i=1}^{n}$ satisfying
\[\sum_{i=1}^m \llvert B_i \rrvert \geq 3^{-\pose{d+2}}\llvert S\rrvert. \]
\end{lemma}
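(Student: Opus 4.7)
The plan is to run the classical greedy selection argument, using Lemma \ref{doubling} as the only geometric input. First I would reorder the finite collection so that the radii are non-increasing, i.e.\ $r_1\ge r_2\ge\cdots\ge r_n$. Then I would construct the disjoint subfamily greedily: set $B_1:=B\pose{\ux_1,r_1}$, and at each subsequent stage select the ball of largest index (equivalently, largest remaining radius after ties are broken) among those $B\pose{\ux_k,r_k}$ that are disjoint from every previously chosen ball, continuing until no further ball can be added. This yields a disjoint subcollection $\{B\pose{\ux_{i_p},r_{i_p}}\}_{p=1}^{m}$ with $r_{i_1}\ge r_{i_2}\ge\cdots\ge r_{i_m}$.

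Next I would show that tripling the selected balls recovers a cover of $S$. Fix any index $k$ not among $\{i_1,\ldots,i_m\}$. By construction, $B\pose{\ux_k,r_k}$ was rejected at some stage because it intersects a previously selected ball $B\pose{\ux_{i_p},r_{i_p}}$ with $i_p<k$; since the list is sorted in decreasing order of radius and the rejection happens before $B\pose{\ux_k,r_k}$ could enter, we have $r_{i_p}\ge r_k$. Applying Lemma \ref{doubling}(1) with $\ux=\ux_{i_p}$, $\uy=\ux_k$, $r=r_{i_p}$, $s=r_k$ gives $B\pose{\ux_k,r_k}\subseteq B\pose{\ux_{i_p},3r_{i_p}}$. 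Combining this with the selected balls (which are trivially contained in their own triples) and the hypothesis $S\subseteq\bigcup_{i=1}^{n}B\pose{\ux_i,r_i}$, we obtain
\[
S\subseteq\bigcup_{p=1}^{m}B\pose{\ux_{i_p},3r_{i_p}}.
\]

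Finally, I would invoke the doubling identity from Lemma \ref{doubling}(2), which gives $\llvert B\pose{\ux_{i_p},3r_{i_p}}\rrvert=3^{d+2}\llvert B\pose{\ux_{i_p},r_{i_p}}\rrvert$ for each $p$. Subadditivity of Lebesgue measure then yields
\[
\llvert S\rrvert\le\sum_{p=1}^{m}\llvert B\pose{\ux_{i_p},3r_{i_p}}\rrvert=3^{d+2}\sum_{p=1}^{m}\llvert B_{i_p}\rrvert,
\]
which is the claimed bound. I do not anticipate a serious obstacle: the only nontrivial ingredient is the containment $B\pose{\ux_k,r_k}\subseteq B\pose{\ux_{i_p},3r_{i_p}}$, and this has already been furnished by Lemma \ref{doubling}(1), whose proof incorporated the twist term $\langle J(x-y),u-z\rangle$ that is special to the Heisenberg-adapted balls \eqref{newball}. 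Everything else is the standard Vitali bookkeeping.
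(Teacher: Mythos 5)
Your proposal is correct and follows essentially the same route as the paper, which simply cites the standard Vitali argument from Stein's \emph{Harmonic Analysis} and observes that only the doubling property (Lemma~\ref{doubling}) is needed; you have merely written out that argument in full. One small slip: after sorting so that $r_1\ge\cdots\ge r_n$, the greedy step should select the ball of \emph{smallest} available index (equivalently, largest remaining radius), not ``largest index'' as written—the parenthetical makes clear what you intend, and the rest of the argument, including the application of Lemma~\ref{doubling}(1) with $s=r_k\le r_{i_p}=r$ and the factor $3^{d+2}$ from Lemma~\ref{doubling}(2), is sound.
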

\begin{proof}
See the proof of \cite[Lemma~1,p.~12]{steinhm}. In that proof, only the doubling property of the balls $B(\ux,r)$ is required. By combining this fact with our Lemma~\ref{doubling}, we can follow the same argument to obtain the desired conclusion.
\end{proof}

Now, we define a maximal function $\mathfrak{M}_{j,\ell}\pose{f}$ by
\begin{equation}\label{ourhardy}
\mathfrak{M}_{j,\ell}\pose{f}\pose{\ux}=\sup_{\substack{r>0, \\ \ux\in B\pose{\uc,r},\\ \uc\in\R^{d+1}}}\frac{1}{\llvert B\pose{\uc,t}\rrvert}\int_{B\pose{\uc,r}}\llvert f\pose{\uy}\rrvert d\uy.
\end{equation}

\begin{lemma}\label{weak11R}
The maximal function $\mathfrak{M}_{j,\ell}$ defined above is of weak type $(1,1)$. That is, there exists a constant $C > 0$, independent of $j$ and $\ell$, such that for all $\alpha > 0$ and for all $f \in L^1(\mathbb{R}^{d+1})$,
\begin{equation}\label{weak11Rhardy}
\llvert\left\{\ux \in \mathbb{R}^{d+1} : \mathfrak{M}_{j,\ell}\pose{f}\pose{\ux} > \alpha\right\}\rrvert\leq\frac{C}{\alpha}\llVert f\rrVert_{L^1\pose{\mathbb{R}^{d+1}}}.
\end{equation}
\end{lemma}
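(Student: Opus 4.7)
The plan is to run the classical non-centered Hardy--Littlewood argument on $\R^{d+1}$ equipped with the family of balls $B(\ux,r)$ from \eqref{newball}. Both ingredients needed are already in place: the doubling identity $\llvert B(\ux,3r)\rrvert=3^{d+2}\llvert B(\ux,r)\rrvert$ from Lemma~\ref{doubling}(2), and the Vitali selection principle from Lemma~\ref{Vitali1}. Crucially, the constants $3^{d+2}$ and $3^{-(d+2)}$ there are purely geometric and do not depend on $j$ or $\ell$, so the resulting weak-$(1,1)$ constant will carry no $j,\ell$-dependence, which is precisely what \eqref{weak11Rhardy} requires.

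First I would fix $\alpha>0$ and $f\in L^1(\R^{d+1})$, and set
\[
E_\alpha=\{\ux\in\R^{d+1}:\mathfrak{M}_{j,\ell}(f)(\ux)>\alpha\}.
\]
For each $\ux\in E_\alpha$, definition \eqref{ourhardy} yields a center $\uc_\ux\in\R^{d+1}$ and a radius $r_\ux>0$ with $\ux\in B(\uc_\ux,r_\ux)$ and
\[
\alpha\,\llvert B(\uc_\ux,r_\ux)\rrvert<\int_{B(\uc_\ux,r_\ux)}\llvert f(\uy)\rrvert\,d\uy\leq\llVert f\rrVert_{L^1(\R^{d+1})}.
\]
In particular $\llvert B(\uc_\ux,r_\ux)\rrvert$, and hence $r_\ux$, is uniformly bounded in $\ux$, and the balls are open since they are defined by strict inequalities. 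For any compact $K\subset E_\alpha$, finitely many of these balls cover $K$; Lemma~\ref{Vitali1} then extracts a disjoint subcollection $B_1,\dots,B_m$ with $\sum_{i=1}^m\llvert B_i\rrvert\geq 3^{-(d+2)}\llvert K\rrvert$. Using $\llvert B_i\rrvert\leq\alpha^{-1}\int_{B_i}\llvert f\rrvert$ and the disjointness of the $B_i$, one obtains
\[
\llvert K\rrvert\leq 3^{d+2}\sum_{i=1}^m\llvert B_i\rrvert\leq\frac{3^{d+2}}{\alpha}\sum_{i=1}^m\int_{B_i}\llvert f(\uy)\rrvert\,d\uy\leq\frac{3^{d+2}}{\alpha}\llVert f\rrVert_{L^1(\R^{d+1})}.
\]
Exhausting $E_\alpha$ by compact subsets then yields \eqref{weak11Rhardy} with $C=3^{d+2}$.

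The only mildly non-standard feature is that $\mathfrak{M}_{j,\ell}$ is a \emph{non-centered} maximal function: the ball $B(\uc_\ux,r_\ux)$ chosen for $\ux\in E_\alpha$ need not be centered at $\ux$. This presents no real obstacle, because Lemma~\ref{Vitali1} is stated for an arbitrary finite family of balls and does not require that the selected centers lie in $S$. Accordingly, I expect no serious difficulty in the lemma itself; the genuinely hard work is the subsequent use of this weak-$(1,1)$ estimate, together with a variant of the Calderón--Zygmund decomposition tailored to the balls $B(\ux,r)$ and the heights associated with $\mathfrak{M}_{j,\ell}$, to produce the $2^j$ bound \eqref{weak11} for $\MM_{j,\ell}^I$.
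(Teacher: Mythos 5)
Your proof is correct and follows essentially the same route the paper takes, since the paper's proof consists of a reference to the standard Vitali-covering argument in Stein \cite[Theorem~1, p.~13]{steinhm}, which is exactly what you have written out: exhaust $E_\alpha$ by compacts, cover by open balls $B(\uc_\ux,r_\ux)$ on which the average exceeds $\alpha$, extract a disjoint subfamily via Lemma~\ref{Vitali1}, and sum. You are also right that the non-centered form of $\mathfrak{M}_{j,\ell}$ and the $j,\ell$-dependence of the ball geometry are immaterial, because Lemma~\ref{Vitali1} and the doubling constant $3^{d+2}$ are uniform in $j,\ell$.
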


\begin{proof}
Arguing similarly as in Lemma~\ref{Vitali1}, we see that the claim follows directly from the proof of \cite[Theorem~1, p.~13]{steinhm}.
\end{proof}

\subsection{Proof of Theorem \ref{weak11theorem}}

In this subsection, we prove Theorem~\ref{weak11theorem} by focusing on the two sets:
\begin{align*}
F_{\alpha} &= \left\{ \ux \in \R^{d+1} : \MM_{j,\ell}^{I}(f)(\ux) > \alpha \right\}, \\
E_{\alpha} &= \left\{ \ux \in \R^{d+1} : \mathfrak{M}_{j,\ell}(f)(\ux) > \alpha \right\},
\end{align*}
where the target set \( F_{\alpha} \) corresponds to the left-hand side of \eqref{weakmainestimate} and the exceptional set \( E_{\alpha} \) corresponds to the left-hand side of \eqref{weak11Rhardy}.

In the traditional approach to proving the weak type \((1,1)\) boundedness of a maximal function via the Calderón–Zygmund decomposition, the target maximal function in $F_{\alpha}$ and the maximal function in $E_\alpha$ used in the decomposition (which defines the exceptional set $E_{\alpha}$) share the same form. Also, the corresponding target set $F_{\alpha}$ and the exceptional set $E_\alpha$ share the same height $\alpha$. For further details on more advanced applications of the Calderón–Zygmund decomposition, see, e.g., \cite{grafakos} and \cite{TaoSeegerJames}. However, these approaches are not suitable for establishing the weak type $(1,1)$ boundedness of our maximal function $\mathcal{M}_{j,\ell}^I$.

To prove Theorem~\ref{weak11theorem}, we develop a variant of the Calderón-Zygmund decomposition for the exceptional set $E_{\lambda(\alpha)}$, where we employ a different height \(\lambda(\alpha) \neq \alpha\) and the different maximal function \(\mathfrak{M}_{j,\ell} \neq \MM_{j,\ell}^{I}\) as defined in \eqref{ourhardy}.

\begin{proposition}[Calde\'{o}n-Zygmund Decomposition]\label{Calderon}
Given $f\in L^1\pose{\R^{d+1}}$ and $\lambda\pose{\alpha}>0$, let $E_{\lambda\pose{\alpha}}=\left\{\ux\in\R^{d+1} : \mathfrak{M}_{j,\ell}\pose{f}\pose{\ux}>\lambda\pose{\alpha}\right\}$. Then there exist disjoint collections $\{B_k\}_{k=1}^{\infty}$ and $\{Q_k \}_{k=1}^{\infty}$ where $B_k=B\pose{\uc_k,r_k}$ is defined in \eqref{newball} such that
\begin{equation}\label{relation1111}
B_k\subseteq Q_k\subseteq B^*_k:=B\pose{\uc_{k},c_1r_k}\ \text{and}\ E_{\lambda\pose{\alpha}}=\bigsqcup_{k=1}^{\infty}Q_{k}=\bigcup_{k=1}^{\infty}B^*_k
\end{equation}
where we set $c_1=6^2$ with respect to the doubling constant $3$ in Lemma \ref{doubling}. With the above decomposition for $E_{\lambda\pose{\alpha}}$, there exists also a decomposition of $f=g+\sum_{k=1}^{\infty}b_k$ so that
\begin{itemize}
\item[(1)] $g=f1_{E_{\lambda\pose{\alpha}}^c}+\sum_{k=1}^{\infty}f_{Q_k}1_{Q_k}$ satisfies $\llvert g\pose{\ux}\rrvert\leq C \lambda\pose{\alpha}$ for almost everywhere $\ux\in\R^{d+1}$ where $f_{Q_k}=\frac{1}{\llvert Q_k\rrvert}\int_{Q_k} f\pose{\uy}d\uy$,
\item[(2)] each $b_k=\pose{f-f_{Q_k}}1_{Q_k}$ is supported in $Q_k\subseteq B^*_k$,
\[\int \llvert b_k\pose{\ux}\rrvert d\ux\leq C \lambda\llvert B^*_k\rrvert\ \text{and}\ \int b_{k}\pose{\ux}d\ux=0,\] 
\item[(3)] $\sum_{k=1}^{\infty}\llvert B^{*}_k\rrvert\leq\frac{C }{\lambda\pose{\alpha}}\llVert f\rrVert_{L^1\pose{\R^{d+1}}}$.
\end{itemize}
\end{proposition}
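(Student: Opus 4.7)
The plan is to carry out a Calderón–Zygmund decomposition adapted to the space-of-homogeneous-type structure built from the balls $B(\ux,r)$ in \eqref{newball}. The three key inputs are already at hand: Lemma~\ref{doubling} (doubling), Lemma~\ref{Vitali1} (Vitali covering), and Lemma~\ref{weak11R} (weak $(1,1)$, which in particular gives $\llvert E_{\lambda(\alpha)}\rrvert<\infty$). First I would note that $E_{\lambda(\alpha)}$ is open, since the non-centered maximal function $\mathfrak{M}_{j,\ell}$ is lower semi-continuous (the balls $B(\uc,r)$ being defined by strict inequalities). For each $\ux\in E_{\lambda(\alpha)}$ I would introduce
\[
r^{*}_{\ux}:=\sup\Bigl\{r>0:\exists\,\uc\ \text{with}\ \ux\in B(\uc,r)\ \text{and}\ \tfrac{1}{\llvert B(\uc,r)\rrvert}\int_{B(\uc,r)}\llvert f\rrvert\,d\uy>\lambda(\alpha)\Bigr\},
\]
which is positive by the definition of $E_{\lambda(\alpha)}$ and finite because $f\in L^{1}$ while $\llvert B(\uc,r)\rrvert\to\infty$ as $r\to\infty$. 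Choosing $r_{\ux}\in(r^{*}_{\ux}/\sqrt{c_1}\,,\,r^{*}_{\ux})$ and a matching center $\uc_{\ux}$, I obtain a ball $B(\uc_{\ux},r_{\ux})\ni\ux$ whose average of $\llvert f\rrvert$ still exceeds $\lambda(\alpha)$. Since $c_1 r_{\ux}>r^{*}_{\ux}$ and $\ux\in B(\uc_{\ux},c_1 r_{\ux})$, the enlarged ball automatically satisfies the \emph{stopping condition}
\[
\frac{1}{\llvert B(\uc_{\ux},c_1 r_{\ux})\rrvert}\int_{B(\uc_{\ux},c_1 r_{\ux})}\llvert f\rrvert\,d\uy\le\lambda(\alpha),
\]
which will force $\llvert g\rrvert\le C\lambda(\alpha)$ at the end.

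Next, from the cover $E_{\lambda(\alpha)}\subseteq\bigcup_{\ux}B(\uc_{\ux},r_{\ux})$ I would run the Vitali selection that underlies Lemma~\ref{Vitali1} — scan candidate balls in order of decreasing radius, selecting each one disjoint from all previously selected ones — to extract a pairwise disjoint subfamily $\{B_k\}=\{B(\uc_k,r_k)\}$. Lemma~\ref{doubling}(1) then yields $\bigcup_{k}B_k^{*}\supseteq E_{\lambda(\alpha)}$, with the particular choice $c_1=6^{2}$ built from two iterated applications of the doubling inequality so that the same constant simultaneously supplies (i) coverage of $E_{\lambda(\alpha)}$ by $\{B_k^{*}\}$, (ii) the stopping condition on $B_k^{*}$ (because $B_k\subseteq B_k^{*}\subseteq B(\uc_k,c_1 r_k)$), and (iii) a Whitney-style partition $\{Q_k\}$ with $B_k\subseteq Q_k\subseteq B_k^{*}$. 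The sets $Q_k$ are produced by the standard disjointification $Q_1=B_1^{*}\cap E_{\lambda(\alpha)}$ and $Q_k=(B_k^{*}\cap E_{\lambda(\alpha)})\setminus\bigcup_{i<k}B_i^{*}$; the decreasing-radius rule in Vitali, together with the generous factor $c_1$, is what separates each stopping ball $B_k$ from all earlier enlargements $B_i^{*}$ and delivers $B_k\subseteq Q_k$.

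The remaining assertions are then routine. The pointwise bound $\llvert f\rrvert\le\lambda(\alpha)$ a.e.\ on $E_{\lambda(\alpha)}^{c}$ is a Lebesgue differentiation statement for the family of balls $B(\uc,r)$, which follows in the standard way from Lemma~\ref{weak11R}. On each $Q_k$, combining the stopping condition with Lemma~\ref{doubling}(2) gives
\[
\llvert f_{Q_k}\rrvert\le\frac{\llvert B_k^{*}\rrvert}{\llvert Q_k\rrvert}\cdot\frac{1}{\llvert B_k^{*}\rrvert}\int_{B_k^{*}}\llvert f\rrvert\,d\uy\le c_1^{d+2}\lambda(\alpha),
\]
since $B_k\subseteq Q_k$ forces $\llvert Q_k\rrvert\ge\llvert B_k\rrvert=c_1^{-(d+2)}\llvert B_k^{*}\rrvert$. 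This proves item (1). The properties of $b_k=(f-f_{Q_k})1_{Q_k}$ in (2) are immediate (support in $Q_k\subseteq B_k^{*}$, mean zero by definition, and $\int\llvert b_k\rrvert\le 2\int_{B_k^{*}}\llvert f\rrvert\le 2\lambda(\alpha)\llvert B_k^{*}\rrvert$), while item (3) follows from the disjointness of $\{B_k\}$, Lemma~\ref{doubling}(2), and Lemma~\ref{weak11R}:
\[
\sum_k\llvert B_k^{*}\rrvert\le c_1^{d+2}\sum_k\llvert B_k\rrvert\le c_1^{d+2}\llvert E_{\lambda(\alpha)}\rrvert\le\frac{C}{\lambda(\alpha)}\llVert f\rrVert_{L^{1}(\R^{d+1})}.
\]
I expect the main obstacle to be the geometric bookkeeping in the second step: verifying that the single constant $c_1=6^{2}$, tied directly to the doubling factor $3$ of Lemma~\ref{doubling}, really does simultaneously support coverage of $E_{\lambda(\alpha)}$, a valid stopping condition on $B_k^{*}$, and the Whitney containment $B_k\subseteq Q_k$ after disjointification.
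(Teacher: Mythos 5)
Your proposal takes a genuinely different route from the paper's. The paper disposes of this proposition by citing Stein's \emph{Harmonic Analysis} (Ch.~I, Lemma~2, p.~15 for the Whitney decomposition; Theorem~2, p.~17 for the Calder\'{o}n--Zygmund decomposition), where the selected radii are tied to the distance from $E_{\lambda(\alpha)}^{c}$ and the stopping condition is extracted from the fact that a further-enlarged ball meets $E_{\lambda(\alpha)}^{c}$. You instead run a stopping-time argument: for each $\bar{x}\in E_{\lambda(\alpha)}$ you take a nearly-maximal radius $r^{*}_{\bar{x}}$ at which some ball through $\bar{x}$ still has average $>\lambda(\alpha)$, and the stopping condition on $B_k^{*}$ then holds automatically because $c_1 r_k>r^{*}_{\bar{x}_k}$. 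Both mechanisms are standard for spaces of homogeneous type; yours puts the stopping condition front and center and avoids the intermediate ``doubly enlarged ball meets $\Omega^{c}$'' step of Whitney, while the paper's is a clean black-box citation. Your treatments of (1)--(3), given the cover, coincide with the paper's.

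There is, however, one concrete error in your construction of the $Q_k$. You define $Q_k=(B_k^{*}\cap E_{\lambda(\alpha)})\setminus\bigcup_{i<k}B_i^{*}$ and assert that the decreasing-radius Vitali selection ``separates each stopping ball $B_k$ from all earlier enlargements $B_i^{*}$,'' which would give $B_k\subseteq Q_k$. That is not the case: Vitali selection only forces $B_k\cap B_i=\emptyset$ for $i<k$, not $B_k\cap B_i^{*}=\emptyset$, and since $B_i^{*}=B(\bar{c}_i,c_1 r_i)$ with $c_1=36$ is far larger than $B_i$, a small later ball $B_k$ disjoint from $B_i$ can sit entirely inside $B_i^{*}\setminus B_i$. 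In that event $Q_k\cap B_k=\emptyset$, and the lower bound $\lvert Q_k\rvert\ge\lvert B_k\rvert=c_1^{-(d+2)}\lvert B_k^{*}\rvert$ on which your estimate for $\lvert f_{Q_k}\rvert$ depends is lost. The repair is to disjointify differently. Since the $B_k$ are pairwise disjoint and $B_k\subseteq E_{\lambda(\alpha)}$ (the average of $\lvert f\rvert$ over $B_k$ exceeds $\lambda(\alpha)$, so $\mathfrak{M}_{j,\ell}f>\lambda(\alpha)$ on $B_k$), assign each $\bar{x}\in E_{\lambda(\alpha)}$ to $Q_k$ if $\bar{x}\in B_k$, and otherwise to $Q_k$ with $k=\min\{i:\bar{x}\in B_i^{*}\}$. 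This yields a partition with $B_k\subseteq Q_k\subseteq B_k^{*}$ and $\bigsqcup_k Q_k=E_{\lambda(\alpha)}$ by construction, after which the stopping condition on $B_k^{*}$, the mean-zero decomposition into $b_k$, and the disjointness plus weak $(1,1)$ estimate for item~(3) all go through as you wrote them. You flagged exactly this step as the likely obstacle; the diagnosis is that the obstacle is real and the sequential disjointification has to be replaced, not just given a larger $c_1$.
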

\begin{proof} See the proof of \cite[Lemma~2, p.~15]{steinhm} and \cite[Theorem~2, p.~17]{steinhm} with $\lambda=\lambda\pose{\alpha}$. 
\end{proof}

 Now, we shall prove Theorem~\ref{weak11theorem}. To establish \eqref{weakmainestimate}, we employ the Calderón–Zygmund decomposition introduced above with 
\begin{equation}
\lambda\pose{\alpha}=2^{j\pose{d-1}}2^{\ell d}\alpha.
\end{equation}
Then, our exceptional set \(E_{\lambda(\alpha)}\) is given by
\begin{align*}
E_{\lambda\pose{\alpha}}&=\left\{\ux\in\R^{d+1}: M_{R}\pose{f}\pose{\ux}>\lambda\pose{\alpha}\right\}
\\
&=\left\{\ux\in\R^{d+1}: M_{R}\pose{f}\pose{\ux}>2^{j\pose{d-1}}2^{\ell d}\alpha\right\}.
\end{align*}
The set 
\[E_{\lambda\pose{\alpha}}=E_{2^{j\pose{d-1}}2^{\ell d}\alpha}\]
is the key component of this section. We can replace $\alpha$ in $E_\alpha$ by $2^{j\pose{d-1}}2^{\ell d}\alpha$ thanks to the good $L^2$-decay rate 
\[2^{-j\pose{\frac{d-2}{2}}}2^{-\ell\pose{\frac{d}{2}}}\]
of $\MM^{I}_{j,\ell}$ in \eqref{l2estimate}. With this choice of height $\lambda\pose{\alpha}=2^{j\pose{d-1}}2^{\ell d}\alpha$, we first control a good portion of $F_\alpha$ as indicated in \eqref{goodpart} below. On the other hand, note that this new exceptional set $E_{\lambda\pose{\alpha}}$ is smaller than the traditional exceptional set $E_{\alpha}$ since $\lambda\pose{\alpha}=2^{j\pose{d-1}}2^{\ell d}\alpha>\alpha$. Hence, we can enlarge $E_{\lambda\pose{\alpha}}$ to $\widetilde{E}_{\lambda\pose{\alpha}}$ as needed. Using this enlarged set $\widetilde{E}_{\lambda\pose{\alpha}}$, we remove the worst part in the remaining bad portion of $F_\alpha$, where the H\"ormander type condition \eqref{Hormander110} below cannot be controlled. After this elimination, we conclude \eqref{weakmainestimate} by verifying the Hörmander type condition \eqref{Hormander110} in Subsection~\ref{subsectionhormander}.

 Let $f=g+\sum_{k=1}^{\infty} b_k$ be the Calder\'{o}n-Zygmund decomposition of $f$ with $E_{\lambda\pose{\alpha}}=E_{2^{j\pose{d-1}}2^{\ell d}\alpha}$ as defined in Proposition \ref{Calderon}. Then, to prove \eqref{weakmainestimate}, it suffices to show the following two inequalities: 
\begin{align}
\llvert\left\{\ux\in\R^{d+1}: \MM^{I}_{j,\ell}\pose{g}\pose{\ux}>\frac{\alpha}{2}\right\}\rrvert&\leq\frac{C 2^j}{\alpha}\llVert f\rrVert_{L^1\pose{\R^{d+1}}},\label{goodpart}
\\
\llvert\left\{\ux\in\R^{d+1}: \MM^{I}_{j,\ell}\pose{\sum_{k=1}^{\infty}b_k}\pose{\ux}>\frac{\alpha}{2}\right\}\rrvert&\leq\frac{C 2^j}{\alpha}\llVert f\rrVert_{L^1\pose{\R^{d+1}}}\label{badpart}.
\end{align}
\begin{proof}[\textbf{Proof of \eqref{goodpart}}] By applying the Chebyshev inequality to the left-hand side of \eqref{goodpart} and using the $L^2$ decay $2^{-j\pose{\frac{d-2}{2}}}2^{-\ell\pose{\frac{d}{2}}}$ of $\MM^{I}_{j,\ell}$ in \eqref{l2estimate}, we obtain
\begin{align}\label{3.5eq}
\llvert\left\{\ux\in\R^{d+1}: \MM^{I}_{j,\ell}\pose{g}\pose{\ux}>\frac{\alpha}{2}\right\}\rrvert&\leq\frac{4}{\alpha^2}\llVert \MM^{I}_{j,\ell}\pose{g}\rrVert^2_{L^2\pose{\R^{d+1}}}\leq\frac{C 2^{-j\pose{d-2}}2^{-\ell d}}{\alpha^2}\int\llvert g\pose{\ux}\rrvert^2d\ux.
\end{align}
From (1) of Proposition \ref{Calderon} with $\lambda\pose{\alpha}=2^{j\pose{d-1}}2^{\ell d}\alpha$, we have 
\begin{align*}
\llvert g\pose{\ux}\rrvert\leq C 2^{j\pose{d-1}}2^{\ell d}\alpha\ \text{for almost everywhere}\ \ux\in\R^{d+1}.
\end{align*}
Substituting this into the right-hand side of \eqref{3.5eq}, we get
\begin{align*}
\llvert\left\{\ux\in\R^{d+1}: \MM^{I}_{j,\ell}\pose{g}\pose{\ux}>\alpha\right\}\rrvert\leq\frac{C 2^{j}}{\alpha}\int\llvert g\pose{\ux}\rrvert d\ux
\leq\frac{C 2^j}{\alpha}\llVert f\rrVert_{L^1\pose{\R^{d+1}}},
\end{align*}
which implies \eqref{goodpart}.
\end{proof}
\begin{proof}[\textbf{Proof of \eqref{badpart}}]
Now, we turn our attention to proving \eqref{badpart}. First, we enlarge the exceptional set 
\[E_{\lambda\pose{\alpha}}=E_{2^{j\pose{d-1}}2^{\ell d}\alpha}
\] by anisotropic dilation. Recall the definition of our new ball $B\pose{\uc,r}$ in \eqref{newball}:
\begin{align*}
B\pose{\uc,r}=\left\{ \uy=\pose{y,y_{d+1}}\in\R^{d+1}: \llvert c-y\rrvert<\frac{r}{2^{j+\ell}}\ \text{and} \ \llvert c_{d+1}-y_{d+1}+\langle Jc,y\rangle\rrvert<\frac{r^2}{2^{j+\ell}}\right\}.
\end{align*}
We then define the enlarged ball $\widetilde{B}\pose{\uc,r}$ by
\begin{align}\label{enlargedball}
\widetilde{B}\pose{\uc,r}&=\left\{ \uy=\pose{y,y_{d+1}}\in\R^{d+1}: \llvert c-y\rrvert<30r\ \text{and} \ \llvert c_{d+1}-y_{d+1}+\langle Jc,y\rangle\rrvert<\frac{(30r)^2}{2^{j+\ell}}\right\}.
\end{align}
Accordingly, we enlarge $E_{\lambda\pose{\alpha}}=\bigcup_{k=1}^{\infty}B^*_k$ (as in \eqref{relation1111}) to 
\begin{align}\label{enlargedexceptionalset}
\widetilde{E}_{\lambda\pose{\alpha}}=\bigcup_{k=1}^{\infty}\widetilde{B^*_k}=\bigcup_{k=1}^{\infty}\widetilde{B}\pose{\bar{c}_{k},c_1r_k}.
\end{align}

To prove \eqref{badpart}, we decompose the set $\left\{\ux\in\R^{d+1}: \MM^{I}_{j,\ell}\pose{\sum_{k=1}^{\infty}b_k}\pose{\ux}>\frac{\alpha}{2}\right\}$ as follows:
\begin{align*}
\llvert\left\{\ux\in\R^{d+1}: \MM^{I}_{j,\ell}\pose{\sum_{k=1}^{\infty}b_k}\pose{\ux}>\frac{\alpha}{2}\right\}\rrvert&\leq
\llvert\left\{\ux\in\widetilde{E}_{\lambda\pose{\alpha}}: \MM^{I}_{j,\ell}\pose{\sum_{k=1}^{\infty}b_k}\pose{\ux}>\frac{\alpha}{4}\right\}\rrvert\\
&+\llvert\left\{\ux\in\pose{\widetilde{E}_{\lambda\pose{\alpha}}}^c: \MM^{I}_{j,\ell}\pose{\sum_{k=1}^{\infty}b_k}\pose{\ux}>\frac{\alpha}{4}\right\}\rrvert.
\end{align*}
From \eqref{enlargedexceptionalset}, we have
\begin{align*}
\llvert\left\{\ux\in\widetilde{E}_{\lambda\pose{\alpha}}: \MM^{I}_{j,\ell}\pose{\sum_{k=1}^{\infty}b_k}\pose{\ux}>\frac{\alpha}{4}\right\}\rrvert\leq\llvert\widetilde{E}_{\lambda\pose{\alpha}}\rrvert=\llvert\bigcup_{k=1}^{\infty}\widetilde{B^*_k}\rrvert&\leq C 2^{\pose{j+\ell}d}\llvert\bigcup_{k=1}^{\infty}B^*_k\rrvert
\\&=C 2^{\pose{j+\ell}d}\llvert E_{\lambda\pose{\alpha}}\rrvert.
\end{align*}
By Lemma \ref{weak11R} with $\lambda\pose{\alpha}=2^{j\pose{d-1}}2^{\ell d}\alpha$, we obtain  
\begin{align*}
\llvert E_{\lambda\pose{\alpha}}\rrvert\leq\frac{C }{\lambda\pose{\alpha}}\llVert f\rrVert_{L^1\pose{\R^{d+1}}}=\frac{C }{2^{j\pose{d-1}}2^{\ell d}\alpha}\llVert f\rrVert_{L^1\pose{\R^{d+1}}}.
\end{align*}
Therefore, it holds that
\begin{align*}
\llvert\left\{\ux\in\widetilde{E}_{\lambda\pose{\alpha}}: \MM^{I}_{j,\ell}\pose{\sum_{k=1}^{\infty}b_k}\pose{\ux}>\frac{\alpha}{4}\right\}\rrvert\leq \frac{C 2^{\pose{j+\ell}d}}{2^{j\pose{d-1}}2^{\ell d}\alpha}\llVert f\rrVert_{L^1\pose{\R^{d+1}}}=\frac{C 2^j}{\alpha}\llVert f\rrVert_{L^1\pose{\R^{d+1}}}.
\end{align*}
Thus, to show \eqref{badpart}, it suffices to show that
\begin{align*}
\llvert\left\{\ux\in\pose{\widetilde{E}_{\lambda\pose{\alpha}}}^c: \MM^{I}_{j,\ell}\pose{\sum_{k=1}^{\infty}b_k}\pose{\ux}>\frac{\alpha}{4}\right\}\rrvert\leq\frac{C 2^j}{\alpha}\llVert f\rrVert_{L^1\pose{\R^{d+1}}}.
\end{align*}

Rather than proving the above inequality, we shall establish the following estimate:
\begin{equation}
\llVert \MM^{I}_{j,\ell}\pose{\sum_{k=1}^{\infty}b_k} \rrVert_{L^1\pose{\pose{\widetilde{E}_{\lambda\pose{\alpha}}}^c}}\leq C 2^j\llVert f\rrVert_{L^1\pose{\R^{d+1}}}.
\end{equation}
Recall the definitions of our spherical maximal operator $\MM^{I}_{j,\ell}$ in \eqref{maximalfunctionofinnerterm} and its kernel in \eqref{kernelofinnerterm}:
\begin{align*}
\MM^{I}_{j,\ell}\pose{f}\pose{\ux}&=\sup_{t>0}\llvert f\ast_{\bH}\pose{K_{j,\ell}}_{t}\pose{\ux}\rrvert \ \text{where}
\\
\pose{K_{j,\ell}}_{t}\pose{x,x_{d+1}}&=\left\{\begin{array}{ll}\frac{1}{t^d}\sm^{I}_{j}\pose{\frac{x}{t}}\frac{2^{j}}{t^2}\psi^{\vee}\pose{\frac{x_{d+1}}{t^2 /2^{j}}}\ \text{if}\ \ell=0, \\\frac{1}{t^d}\sm^{I}_{j}\pose{\frac{x}{t}}\frac{2^{j+\ell}}{t^2}\chi^{\vee}\pose{\frac{x_{d+1}}{t^2 /2^{j+\ell}}}\ \text{if}\ \ell\geq1. \end{array}\right. 
\end{align*}
Since $\int b_{k}\pose{\ux}d\ux=0$ (see (2) in Proposition \ref{Calderon}), we have
\begin{align*}
b_k\ast_{\bH}\pose{K_{j,\ell}}_{t}\pose{\ux}&=\int_{\R^{d+1}}b_{k}\pose{\pose{\uy}^{-1}\cdot\ux}\pose{K_{j,\ell}}_{t}\pose{\uy}d\uy=\int_{Q_k}\pose{K_{j,\ell}}_{t}\pose{\ux\cdot\pose{\uy}^{-1}}b_{k}\pose{\uy}d\uy \\
&=\int_{Q_k}\left[\pose{K_{j,\ell}}_{t}\pose{\ux\cdot\pose{\uy}^{-1}}-\pose{K_{j,\ell}}_{t}\pose{\ux\cdot\pose{\bar{c}_k}^{-1}}\right]b_{k}\pose{\uy}d\uy
\end{align*}
where $\bar{c}_k$ is the center of $B_k=B\pose{\bar{c}_k,r_k}\subseteq Q_k\subseteq B^*_k$. Thus, it holds that
\begin{align*}
\llVert \MM^{I}_{j,\ell}\pose{b_k} \rrVert_{L^1\pose{\pose{\widetilde{E}_{\lambda\pose{\alpha}}}^c}}&\leq\int_{Q_k}\left(\int_{\pose{\widetilde{B^*_k}}^c}\sup_{t>0}\llvert\pose{K_{j,\ell}}_{t}\pose{\ux\cdot\pose{\uy}^{-1}}-\pose{K_{j,\ell}}_{t}\pose{\ux\cdot\pose{\bar{c}_k}^{-1}}\rrvert d\ux\right)\\
&\times\llvert b_{k}\pose{\uy}\rrvert d\uy. 
\end{align*}

Now, if we can show that the following H\"ormander type condition
\begin{equation}\label{Hormander11}
\sup_{r>0}\sup_{\uy\in B\pose{\bar{c}_k,r}}\int_{\pose{\widetilde{B}\pose{\bar{c}_{k},r}}^c}\sup_{t>0}\llvert\pose{K_{j,\ell}}_{t}\pose{\ux\cdot\pose{\uy}^{-1}}-\pose{K_{j,\ell}}_{t}\pose{\ux\cdot\pose{\bar{c}_k}^{-1}}\rrvert d\ux\leq C 2^j,
\end{equation}
holds, then we obtain
\begin{align*}
\llVert \MM^{I}_{j,\ell}\pose{\sum_{k=1}^{\infty}b_k} \rrVert_{L^1\pose{\pose{\widetilde{E}_{\lambda\pose{\alpha}}}^c}}&\leq C  2^j\sum_{k=1}^{\infty}\int_{Q_k}\llvert b_{k}\pose{\uy}\rrvert d\uy \\
&\leq C 2^j\lambda\pose{\alpha}\sum_{k=1}^{\infty}\llvert B^*_k\rrvert \\
&\leq C 2^j\lambda\pose{\alpha}\frac{1}{\lambda\pose{\alpha}}\llVert f\rrVert_{L^1\pose{\R^{d+1}}}\\
&= C 2^j\llVert f\rrVert_{L^1\pose{\R^{d+1}}}.
\end{align*}
Thus, \eqref{badpart} follows once we establish \eqref{Hormander11}. We complete the proof of Theorem 3.1 by proving \eqref{Hormander11} in the following subsection.
\end{proof}

\subsection{H\"ormander Type Condition}\label{subsectionhormander}

Rather than proving \eqref{Hormander11} directly, by translating the center $\uc_k$ of $B_k$ and $\widetilde{B^*_k}$ to $0$, we shall prove the following H\"ormander type condition:
\begin{equation}\label{Hormander110}
\sup_{r>0}\sup_{\uy\in B\pose{0,r}}\int_{\pose{\widetilde{B}\pose{0,r}}^c}\sup_{t>0}\llvert\pose{K_{j,\ell}}_{t}\pose{\ux\cdot\pose{\uy}^{-1}}-\pose{K_{j,\ell}}_{t}\pose{\ux}\rrvert d\ux\leq C 2^j.
\end{equation}
To show \eqref{Hormander110}, we consider the regions of $\ux$ and $\uy$ in \eqref{Hormander110}. Put $\uc=0$ to \eqref{newball} and \eqref{enlargedball}. Then we have
\begin{align}
B\pose{0,r}&=\left\{ \uy=\pose{y,y_{d+1}}\in\R^{d+1}: \llvert y\rrvert<\frac{r}{2^{j+\ell}}\ \text{and} \ \llvert y_{d+1}\rrvert<\frac{r^2}{2^{j+\ell}}\right\},\label{domainofy}
\\
\widetilde{B}\pose{0,r}^c&=\left\{\ux=\pose{x,x_{d+1}}\in\R^{d+1}: \llvert x\rrvert>30r\ \text{or} \ \llvert x_{d+1}\rrvert>\frac{(30r)^2}{2^{j+\ell}}\right\}.\label{domainofx}
\end{align}
From now on, we fix $r>0$, $\llvert y\rrvert<\frac{r}{2^{j+\ell}}$, and $\llvert y_{d+1}\rrvert<\frac{r^2}{2^{j+\ell}}$.

In this subsection, we do not distinguish between $\psi$ and $\chi$ and hence use a smooth compact supported function $\psi$ for all cases $\ell\geq0$. Recall the definition of $\pose{K_{j,\ell}}_{t}$ in \eqref{kernelofinnerterm}:
\begin{align*}
\pose{K_{j,\ell}}_{t}\pose{x,x_{d+1}}&=\frac{1}{t^d}\sm_{j}^{I}\pose{\frac{x}{t}}\frac{2^{j+\ell}}{t^2}\psi^{\vee}\pose{\frac{x_{d+1}}{t^2 /2^{j+\ell}}}.
\end{align*}
Then, it holds that
\begin{align*}
\llvert\pose{K_{j,\ell}}_{t}\pose{\ux\cdot\pose{\uy}^{-1}}-\pose{K_{j,\ell}}_{t}\pose{\ux}\rrvert\leq\llvert E\pose{\ux,\uy,t}\rrvert+\llvert H\pose{\ux,\uy,t}\rrvert
\end{align*}
where the Euclidean difference part $E\pose{\ux,\uy,t}$ and the Heisenberg difference part $H\pose{\ux,\uy,t}$ are defined by
\begin{align}
E\pose{\ux,\uy,t}=\frac{1}{t^d}\llvert \sm^{I}_{j}\pose{\frac{x-y}{t}}-\sm^{I}_{j}\pose{\frac{x}{t}}\rrvert\frac{2^{j+\ell}}{t^2}\llvert\psi^{\vee}\pose{\frac{x_{d+1}-y_{d+1}+\langle Jx,y\rangle}{t^2 /2^{j+\ell}}}\rrvert,\label{Euclideandifferencepart}
\\
H\pose{\ux,\uy,t}=\frac{1}{t^d}\llvert\sm^{I}_{j}\pose{\frac{x}{t}}\rrvert\frac{2^{j+\ell}}{t^2}\llvert\psi^{\vee}\pose{\frac{x_{d+1}-y_{d+1}+\langle Jx,y\rangle}{t^2 /2^{j+\ell}}}-\psi^{\vee}\pose{\frac{x_{d+1}}{t^2 /2^{j+\ell}}}\rrvert.\label{Heisenbergdifferencepart}
\end{align}
To show \eqref{Hormander110}, it suffices to show that
\begin{align}
\sum_{n\in\Z}\int_{\pose{\widetilde{B}\pose{0,r}}^c}\sup_{t\in[2^{n-1},2^n]}\llvert E\pose{\ux,\uy,t}\rrvert d\ux
\leq C 2^j,\label{Euclideanmv}
\\
\sum_{n\in\Z}\int_{\pose{\widetilde{B}\pose{0,r}}^c}\sup_{t\in[2^{n-1},2^n]}\llvert H\pose{\ux,\uy,t}\rrvert d\ux\leq C 2^j.\label{Heisenbergmv}
\end{align}
Before proving the above two estimates, we recall the following well-known Mean-value Lemma:
\begin{lemma}\label{meanvaluelemma} Let $F\pose{x}$ be a function in Schwartz space $\MS\pose{\R^d}$ and let $y\in\R^d$ with $\llvert y\rrvert\leq1$. Then, for any $N\in\N$, we have
\begin{align}\label{meanvalueestimate}
\llvert F\pose{x-y}-F\pose{x}\rrvert\leq \frac{C_N\llvert y\rrvert}{\pose{1+\llvert x\rrvert}^N}.
\end{align}
\end{lemma}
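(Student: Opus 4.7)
The plan is to prove this standard mean-value estimate by combining the fundamental theorem of calculus with the rapid decay of derivatives of Schwartz functions. First I would parametrize the segment from $x-y$ to $x$ and write
\[
F(x-y)-F(x) = -\int_{0}^{1}\nabla F(x-sy)\cdot y\,ds,
\]
which immediately yields $\llvert F(x-y)-F(x)\rrvert \leq \llvert y\rrvert \sup_{s\in[0,1]}\llvert \nabla F(x-sy)\rrvert$.

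Next, since $F\in\MS(\R^d)$, every component of $\nabla F$ is itself a Schwartz function, so for the given $N\in\N$ there is a constant $C_N>0$ with $\llvert \nabla F(z)\rrvert \leq C_N(1+\llvert z\rrvert)^{-N}$ for every $z\in\R^d$. To replace $(1+\llvert x-sy\rrvert)^{-N}$ by $(1+\llvert x\rrvert)^{-N}$ uniformly in $s\in[0,1]$, I would invoke the hypothesis $\llvert y\rrvert\leq 1$: a short case distinction between $\llvert x\rrvert\geq 1$, where $\llvert x-sy\rrvert \geq \llvert x\rrvert - 1 \geq \tfrac{1}{2}\llvert x\rrvert$, and $\llvert x\rrvert<1$, where both $1+\llvert x-sy\rrvert$ and $1+\llvert x\rrvert$ are comparable to absolute constants, gives the uniform comparison $1+\llvert x-sy\rrvert \geq \tfrac{1}{2}(1+\llvert x\rrvert)$.

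Combining these ingredients produces \eqref{meanvalueestimate} with constant $2^N C_N$, which can then be renamed $C_N$. No step poses a genuine obstacle; this is the classical mean-value estimate for Schwartz functions, recorded here only as a technical tool for verifying the H\"ormander-type condition \eqref{Hormander110} on the difference parts $E(\ux,\uy,t)$ and $H(\ux,\uy,t)$ in \eqref{Euclideandifferencepart}--\eqref{Heisenbergdifferencepart}.
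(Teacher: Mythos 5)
The paper states Lemma~\ref{meanvaluelemma} as ``well-known'' and gives it no proof, so there is no internal argument to compare against; your proof is precisely the standard one the authors are invoking, and it is correct. One small arithmetic slip: in the case $\llvert x\rrvert\geq 1$ the intermediate chain $\llvert x-sy\rrvert\geq\llvert x\rrvert-1\geq\tfrac{1}{2}\llvert x\rrvert$ actually requires $\llvert x\rrvert\geq 2$ (at $\llvert x\rrvert=\tfrac{3}{2}$ one has $\llvert x\rrvert-1=\tfrac{1}{2}<\tfrac{3}{4}=\tfrac{1}{2}\llvert x\rrvert$). The target inequality $1+\llvert x-sy\rrvert\geq\tfrac{1}{2}\pose{1+\llvert x\rrvert}$ nevertheless holds for all $\llvert x\rrvert\geq 1$, since $1+\llvert x-sy\rrvert\geq 1+\llvert x\rrvert-1=\llvert x\rrvert$ and $\llvert x\rrvert\geq\tfrac{1}{2}\pose{1+\llvert x\rrvert}$ exactly when $\llvert x\rrvert\geq 1$; alternatively, split at $\llvert x\rrvert\geq 2$ and accept a slightly larger absolute constant. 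Either cosmetic fix closes the gap, and the rest of the argument — FTC along the segment, Schwartz decay of $\nabla F$, uniform replacement of $1+\llvert x-sy\rrvert$ by $1+\llvert x\rrvert$ — is exactly right.
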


\begin{proof}[\textbf{Proof of \eqref{Euclideanmv}}] Decompose the sum over $n$ as 
\[
\sum_{n\in\Z}=\sum_{\substack{n\in\Z\\2^{j}\llvert y\rrvert<2^{n-1}}}+\sum_{\substack{n\in\Z\\2^{j}\llvert y\rrvert\geq2^{n-1}}}.
\]
\paragraph{Case \RN{1}: the 1st sum $\sum_{\substack{n\in\Z\\2^{j}\llvert y\rrvert<2^{n-1}}}$} To treat the integrand $\sup_{t\in[2^{n-1},2^n]}\llvert E\pose{\ux,\uy,t}\rrvert$, recall the definition of $\sm^{I}_{j}$ in \eqref{insidesmj}:
\begin{equation*}
\sm^{I}_{j}\pose{x}=2^{jd}\int_{\R^{d+1}}\chi^{\vee}\pose{2^j\pose{x-u}}\psi\pose{x-u}\sm\pose{u}.
\end{equation*}
Then, together with $\llvert\psi\rrvert\leq1$, we have
\begin{align*}
\llvert \sm^{I}_{j}\pose{\frac{x-y}{t}}-\sm^{I}_{j}\pose{\frac{x}{t}}\rrvert&\leq 2^{jd}\int_{\R^{d+1}}\llvert \chi^{\vee}\pose{2^j\pose{\frac{x}{t}-u-\frac{y}{t}}}-\chi^{\vee}\pose{2^j\pose{\frac{x}{t}-u}}\rrvert\sm\pose{u}
\\
&+2^{jd}\int_{\R^{d+1}}\llvert\chi^{\vee}\pose{2^j\pose{x-u}}\rrvert\llvert\psi\pose{\frac{x}{t}-u-\frac{y}{t}}-\psi\pose{\frac{x}{t}-u}\rrvert\sm\pose{u}.
\end{align*}
Since $2^{n-1}\leq t\leq2^n$ and $2^{j}\llvert y\rrvert<2^{n-1}$, it holds that $\llvert 2^j\frac{y}{t}\rrvert<1$. Thus, by applying Lemma \ref{meanvaluelemma} to the function $\chi^{\vee}$, we immediately obtain
\begin{align*}
\llvert \chi^{\vee}\pose{2^j\pose{\frac{x}{t}-u-\frac{y}{t}}}-\chi^{\vee}\pose{2^j\pose{\frac{x}{t}-u}}\rrvert\leq2^j\llvert \frac{y}{t}\rrvert\frac{C_N}{\pose{1+2^j\llvert\frac{x}{t}-u\rrvert}^{N}}.
\end{align*}
Hence, by the above inequality and applying \eqref{measureofsmj} with $N$ sufficiently large, we deduce that
\begin{align*}
 2^{jd}\int_{\R^{d+1}}\llvert \chi^{\vee}\pose{2^j\pose{\frac{x}{t}-u-\frac{y}{t}}}-\chi^{\vee}\pose{2^j\pose{\frac{x}{t}-u}}\rrvert\sm\pose{u}\leq 2^j\llvert \frac{y}{t}\rrvert\frac{C 2^j}{\pose{1+\llvert\frac{x}{t}\rrvert}^{d+1}}.
\end{align*}
Similarly, by applying the Mean-value theorem to $\psi\pose{\frac{x}{t}-u-\frac{y}{t}}-\psi\pose{\frac{x}{t}-u}$ and using \eqref{measureofsmj}, one obtains 
\[2^{jd}\int_{\R^{d+1}}\llvert\chi^{\vee}\pose{2^j\pose{x-u}}\rrvert\llvert\psi\pose{\frac{x}{t}-u-\frac{y}{t}}-\psi\pose{\frac{x}{t}-u}\rrvert\sm\pose{u}\leq2^j\llvert \frac{y}{t}\rrvert\frac{C 2^j}{\pose{1+\llvert\frac{x}{t}\rrvert}^{d+1}}.\]
Therefore, we have
\begin{align}\label{meanvaluetermestimate}
\llvert \sm^{I}_{j}\pose{\frac{x-y}{t}}-\sm^{I}_{j}\pose{\frac{x}{t}}\rrvert\leq2^j\llvert \frac{y}{t}\rrvert\frac{C 2^j}{\pose{1+\llvert\frac{x}{t}\rrvert}^{d+1}}.
\end{align}
Also, since $\psi^{\vee}$ belongs to the Schwartz class, we have
\begin{equation}\label{schwartzheisenberg}
\llvert\psi^{\vee}\pose{\frac{x_{d+1}-y_{d+1}+\langle Jx,y\rangle}{t^2 /2^{j+\ell}}}\rrvert\leq\frac{C_N}{\pose{1+\frac{2^{j+\ell}}{t^2}\llvert x_{d+1}-y_{d+1}+\langle Jx,y\rangle\rrvert}^N}
\end{equation}
for any $N\in\N$.

Now, applying \eqref{meanvaluetermestimate} and \eqref{schwartzheisenberg} to \eqref{Euclideandifferencepart}, we obtain
\begin{align*}
\sup_{t\in[2^{n-1},2^n]}\llvert E\pose{\ux,\uy,t}\rrvert\leq C 2^j\cdot 2^j\llvert \frac{y}{2^n}\rrvert\frac{\pose{1/2^n}^d}{\pose{1+\llvert\frac{x}{2^n}\rrvert}^{d+1}}\frac{\pose{2^{j+\ell}/2^{2n}}}{\pose{1+\frac{2^{j+\ell}}{2^{2n}}\llvert x_{d+1}-y_{d+1}+\langle Jx,y\rangle\rrvert}^2}.
\end{align*}
By inserting this estimate to \eqref{Euclideanmv}, we deduce that
\begin{align*}
\sum_{\substack{n\in\Z\\2^{j}\llvert y\rrvert<2^{n-1}}}\int_{\pose{\widetilde{B}\pose{0,r}}^c}\sup_{t\in[2^{n-1},2^n]}\llvert E\pose{\ux,\uy,t}\rrvert d\ux\leq C 2^j \sum_{\substack{n\in\Z\\2^{j}\llvert y\rrvert<2^{n-1}}}2^j\llvert\frac{y}{2^n}\rrvert\lesssim2^j.
\end{align*}

\paragraph{Case \RN{2}: the 2nd sum $\sum_{\substack{n\in\Z\\2^{j}\llvert y\rrvert\geq2^{n-1}}}$} Combining the condition $2^{j}\llvert y\rrvert\geq2^{n-1}$ with $\llvert y\rrvert<\frac{r}{2^{j+\ell}}$ from \eqref{domainofy}, we have
\begin{equation}\label{rangeofyandr}
2^{n-1}\leq2^j\llvert y\rrvert<\frac{r}{2^{\ell}}\leq r\ \text{and} \sum_{\substack{n\in\Z\\2^{j}\llvert y\rrvert\geq2^{n-1}}}\leq\sum_{\substack{n\in\Z\\ r\geq2^{n-1}}}.
\end{equation}
On the other hand, according to \eqref{domainofx}, the integral over $\pose{\widetilde{B}\pose{0,r}}^c$ can be decomposed as
\[\int_{\pose{\widetilde{B}\pose{0,r}}^c}d\ux=\int_{\llvert x\rrvert>30r}\int_{\R}dx_{d+1}dx+\int_{\llvert x\rrvert\leq 30r}\int_{\llvert x_{d+1}\rrvert>\frac{\pose{30r}^2}{2^{j+\ell}}}dx_{d+1}dx.\]

\paragraph{Case \RN{2}-1: the 1st integral} From the range of $y$ in \eqref{rangeofyandr} and the fact that $\llvert x\rrvert>30r$ in the first integral, we observe that $\llvert x-y\rrvert>\frac{1}{2}\llvert x\rrvert$. Combining this with the upper bound of $\sm_{j}^{I}$ in \eqref{measureofsmj} and using $\llvert \psi\rrvert\leq1$, we obtain the following estimates:
\begin{align*}
\llvert \sm_{j}^{I}\pose{\frac{x-y}{t}}\rrvert, \llvert \sm_{j}^{I}\pose{\frac{x}{t}}\rrvert\leq\frac{C 2^j}{\pose{1+\llvert \frac{x}{t}\rrvert}^{d+1}}\frac{1}{\pose{1+\llvert \frac{x}{t}\rrvert}}.
\end{align*}
Also, note that $\llvert\frac{x}{t}\rrvert>\frac{r}{t}\geq \frac{r}{2^n}$ for $t\in[2^{n-1},2^n]$. Together with the above and \eqref{schwartzheisenberg}, we have
\begin{align*}
\sup_{t\in[2^{n-1},2^n]}\llvert E\pose{\ux,\uy,t}\rrvert\leq C 2^j \llvert \frac{2^n}{r}\rrvert\frac{\pose{1/2^n}^d}{\pose{1+\llvert\frac{x}{2^n}\rrvert}^{d+1}}\frac{\pose{2^{j+\ell}/2^{2n}}}{\pose{1+\frac{2^{j+\ell}}{2^{2n}}\llvert x_{d+1}-y_{d+1}+\langle Jx,y\rangle\rrvert}^2}.
\end{align*}
Therefore, it holds that
\begin{align*}
\sum_{\substack{n\in\Z\\2^{j}\llvert y\rrvert\geq2^{n-1}}}\int_{\llvert x\rrvert>30r}\int_{\R}\sup_{t\in[2^{n-1},2^n]}\llvert E\pose{\ux,\uy,t}\rrvert dx_{d+1}dx\lesssim 2^j \sum_{\substack{n\in\Z\\ r\geq2^{n-1}}}\llvert\frac{2^n}{r}\rrvert\lesssim2^j.
\end{align*}

\paragraph{Case \RN{2}-2: the 2nd integral} From the range of $\uy$ in \eqref{domainofy} and the conditions $\llvert x\rrvert\leq 30r$ and $\llvert x_{d+1}\rrvert>\frac{\pose{30r}^2}{2^{j+\ell}}$ in the second integral, we observe that
\[\llvert y_{d+1}-\langle Jx,y\rangle\rrvert\leq \frac{r^2}{2^{j+\ell}}+\frac{30r^2}{2^{j+\ell}}<\frac{1}{2}\llvert x_{d+1}\rrvert. \]
Using this estimate and \eqref{schwartzheisenberg}, we obtain
\begin{align*}
\llvert\psi^{\vee}\pose{\frac{x_{d+1}-y_{d+1}+\langle Jx,y\rangle}{t^2 /2^{j+\ell}}}\rrvert\leq \frac{C}{\pose{1+\frac{2^{j+\ell}}{t^2}\llvert x_{d+1}\rrvert}^2}\frac{1}{\pose{1+\frac{2^{j+\ell}}{t^2}\llvert x_{d+1}\rrvert}}.
\end{align*}
Also, since $\llvert x_{d+1}\rrvert>\frac{\pose{30r}^2}{2^{j+\ell}}$, for $t\in[2^{n-1},2^n]$, it holds that $\frac{2^{j+\ell}}{t^2}\llvert x_{d+1}\rrvert\geq \frac{r^2}{2^{2n}}$. Thus, we have
\begin{align*}
\sup_{t\in[2^{n-1},2^n]}\llvert E\pose{\ux,\uy,t}\rrvert&\leq C 2^j\llvert \frac{2^{2n}}{r^2}\rrvert\frac{\pose{1/2^n}^d}{\pose{1+\llvert\frac{x-y}{2^n}\rrvert}^{d+1}}\frac{\pose{2^{j+\ell}/2^{2n}}}{\pose{1+\frac{2^{j+\ell}}{2^{2n}}\llvert x_{d+1}\rrvert}^2}
\\
&+C 2^j\llvert \frac{2^{2n}}{r^2}\rrvert\frac{\pose{1/2^n}^d}{\pose{1+\llvert\frac{x}{2^n}\rrvert}^{d+1}}\frac{\pose{2^{j+\ell}/2^{2n}}}{\pose{1+\frac{2^{j+\ell}}{2^{2n}}\llvert x_{d+1}\rrvert}^2}.
\end{align*}
Since $y$ is fixed and $\int \frac{\pose{1/2^n}^d}{\pose{1+\llvert\frac{x}{2^n}\rrvert}^{d+1}}dx\lesssim 1$, we obtain
\begin{align*}
\sum_{\substack{n\in\Z\\2^{j}\llvert y\rrvert\geq2^{n-1}}}\int_{\llvert x\rrvert\leq 30r}\int_{\llvert x_{d+1}\rrvert>\frac{\pose{30r}^2}{2^{j+\ell}}}\sup_{t\in[2^{n-1},2^n]}\llvert E\pose{\ux,\uy,t}\rrvert dx_{d+1}dx\lesssim 2^j \sum_{\substack{n\in\Z\\ r\geq2^{n-1}}}\llvert\frac{2^{2n}}{r^2}\rrvert\lesssim2^j.
\end{align*}
Therefore, we conclude that \eqref{Euclideanmv} holds.
\end{proof}

\begin{proof}[\textbf{Proof of \eqref{Heisenbergmv}}] Recall the definition of $\sm^{I}_{j}$ in \eqref{insidesmj}:
\begin{equation*}
\sm^{I}_{j}\pose{\frac{x}{t}}=2^{jd}\int_{\R^{d+1}}\chi^{\vee}\pose{2^j\pose{\frac{x}{t}-u}}\psi\pose{\frac{x}{t}-u}\sm\pose{u}.
\end{equation*}
Since $\llvert u\rrvert=1$ and by the support condition $\llvert \frac{x}{t}-u\rrvert<2$, we deduce that 
\begin{equation}\label{xrangewitht}
\llvert \frac{x}{t}\rrvert\leq 3.
\end{equation}
 Also, from now on, we assume $t\in[2^{n-1},2^n]$. Now, we decompose the sum over $n$ as 
\[
\sum_{n\in\Z}=\sum_{\substack{n\in\Z \\ 2^n<10r}}+\sum_{\substack{n\in\Z \\ 2^n\geq10r}}.
\]
\paragraph{Case \RN{1}: the 1st sum $\sum_{\substack{n\in\Z \\ 2^n<10r}}$}From $\llvert x\rrvert\leq 3t\leq3\cdot2^n<30r$ and \eqref{domainofx}, one can see that 
\[\int_{\pose{\widetilde{B}\pose{0,r}}^c}d\ux=\int_{\llvert x\rrvert\leq 30r}\int_{\llvert x_{d+1}\rrvert>\frac{\pose{30r}^2}{2^{j+\ell}}}dx_{d+1}dx.\] 
Also, from the range of $\uy$ in \eqref{domainofy} combined with $\llvert x\rrvert\leq 30r$ and $\llvert x_{d+1}\rrvert>\frac{\pose{30r}^2}{2^{j+\ell}}$, we deduce that 
\[\llvert y_{d+1}-\langle Jx,y\rangle\rrvert\leq \frac{r^2}{2^{j+\ell}}+\frac{30r^2}{2^{j+\ell}}<\frac{1}{2}\llvert x_{d+1}\rrvert. \]
Together with \eqref{schwartzheisenberg}, the following estimates hold:
\begin{align*}
\llvert\psi^{\vee}\pose{\frac{x_{d+1}-y_{d+1}+\langle Jx,y\rangle}{t^2 /2^{j+\ell}}}\rrvert,\ \llvert \psi^{\vee}\pose{\frac{x_{d+1}}{t^2 /2^{j+\ell}}}\rrvert\leq \frac{C}{\pose{1+\frac{2^{j+\ell}}{t^2}\llvert x_{d+1}\rrvert}^2} \frac{1}{\pose{1+\frac{2^{j+\ell}}{t^2}\llvert x_{d+1}\rrvert}}.
\end{align*}
Again, since $\llvert x_{d+1}\rrvert>\frac{\pose{30r}^2}{2^{j+\ell}}$, for $t\in[2^{n-1},2^n]$, it holds that $\frac{2^{j+\ell}}{t^2}\llvert x_{d+1}\rrvert\geq \frac{r^2}{2^{2n}}$. Thus, we have
\begin{align*}
\sup_{t\in[2^{n-1},2^n]}\llvert H\pose{\ux,\uy,t}\rrvert\leq C 2^j\llvert \frac{2^{2n}}{r^2}\rrvert\frac{\pose{1/2^n}^d}{\pose{1+\llvert\frac{x}{2^n}\rrvert}^{d+1}}\frac{\pose{2^{j+\ell}/2^{2n}}}{\pose{1+\frac{2^{j+\ell}}{2^{2n}}\llvert x_{d+1}\rrvert}^2}.
\end{align*}
Therefore, we can control the first sum $\sum_{\substack{n\in\Z \\ 2^n<10r}}$ by 
\begin{align*}
\sum_{\substack{n\in\Z \\ 2^n<10r}} \int_{\llvert x\rrvert\leq 30r}\int_{\llvert x_{d+1}\rrvert>\frac{\pose{30r}^2}{2^{j+\ell}}}\sup_{t\in[2^{n-1},2^n]}\llvert H\pose{\ux,\uy,t}\rrvert dx_{d+1}dx\lesssim 2^j \sum_{\substack{n\in\Z \\ 2^n<10r}} \llvert\frac{2^{2n}}{r^2}\rrvert\lesssim2^j.
\end{align*}

\paragraph{Case \RN{2}: the 2nd sum $\sum_{\substack{n\in\Z \\ 2^n\geq10r}}$}Note that $10r\leq 2^n\leq 2t$. Then, together with the range of $\uy$ in \eqref{domainofy} and $\llvert x\rrvert\leq3t$ in \eqref{xrangewitht}, one can observe that
\begin{equation}\label{meanvalueheisenbergsize}
\llvert y_{d+1}-\langle Jx,y\rangle\rrvert<\frac{r^2}{2^{j+\ell}}+\frac{3tr}{2^{j+\ell}}\leq\frac{4tr}{2^{j+\ell}}\leq\frac{t^2}{2^{j+\ell}}.
\end{equation}
Thus, we deduce that \[\frac{2^{j+\ell}}{t^2}\llvert y_{d+1}-\langle Jx,y\rangle\rrvert\leq1.\] By utilizing Lemma \ref{meanvaluelemma}, we obtain
\begin{align*}
\llvert\psi^{\vee}\pose{\frac{x_{d+1}-y_{d+1}+\langle Jx,y\rangle}{t^2 /2^{j+\ell}}}-\psi^{\vee}\pose{\frac{x_{d+1}}{t^2 /2^{j+\ell}}}\rrvert&\leq\frac{2^{j+\ell}}{t^2}\llvert y_{d+1}-\langle Jx,y\rangle\rrvert\frac{C}{\pose{1+\frac{2^{j+\ell}}{t^2}\llvert x_{d+1}\rrvert}^2} \\
&\leq\frac{4r}{t}\frac{C}{\pose{1+\frac{2^{j+\ell}}{t^2}\llvert x_{d+1}\rrvert}^2} .
\end{align*}
The second line of the above comes from $\llvert y_{d+1}-\langle Jx,y\rangle\rrvert<\frac{4tr}{2^{j+\ell}}$ in \eqref{meanvalueheisenbergsize}. 
Then, it holds that
\[\sup_{t\in[2^{n-1},2^n]}\llvert H\pose{\ux,\uy,t}\rrvert\leq C 2^j \frac{r}{2^n}\frac{\pose{1/2^n}^d}{\pose{1+\llvert\frac{x}{2^n}\rrvert}^{d+1}}\frac{\pose{2^{j+\ell}/2^{2n}}}{\pose{1+\frac{2^{j+\ell}}{2^{2n}}\llvert x_{d+1}\rrvert}^2}.\]
Therefore, we have
\[\sum_{\substack{n\in\Z \\ 2^n\geq10r}}\int_{\pose{\widetilde{B}\pose{0,r}}^c}\sup_{t\in[2^{n-1},2^n]}\llvert H\pose{\ux,\uy,t}\rrvert d\ux\leq C  2^j \sum_{\substack{n\in\Z \\ 2^n\geq10r}}\frac{r}{2^n}\lesssim2^j,\]
which conclude the proof of \eqref{Heisenbergmv}.
\end{proof}

\section{Some Reductions for the $L^2$ Estimate for $\MM_{j,\ell}^{I}$}\label{sectionreductionl2}
Now, we shall show that \eqref{l2estimate} holds:
\begin{equation}\label{L2estimate}
\llVert \MM_{j,\ell}^{I}\rrVert_{L^{2}\pose{\R^{d+1}}\rightarrow L^2\pose{\R^{d+1}}}\leq C 2^{-j\pose{\frac{d-2}{2}}}2^{-\ell\pose{\frac{d}{2}}}.
\end{equation}
Recall that $\MM_{j,\ell}^{I}$ is defined by:
\begin{align*}
\varphi_{j,\ell}\pose{\xi_{d+1}}&=\left\{\begin{array}{ll} \psi\pose{\frac{\xi_{d+1}}{2^j}}\text{if}\ \ell=0, \\  \chi\pose{\frac{\xi_{d+1}}{2^{j+\ell}}}\text{if}\ \ell\geq1,\end{array}\right. \nonumber
\\
\MA^{I}_{j,\ell}\pose{f}\pose{\ux,t}&=\int_{\R^{d+1}} e^{2\pi i\ux\cdot\uxi}\widehat{\sm_{j}^{I}}\pose{t\pose{\xi+\xi_{d+1}Jx}}\varphi_{j,\ell}\pose{t^2\xi_{d+1}}\widehat{f}\pose{\xi,\xi_{d+1}}d\xi d\xi_{d+1},
\\
\MM_{j,\ell}^{I}\pose{f}\pose{\ux}&=\sup_{t>0}\llvert \MA_{j,\ell}^{I}\pose{f}\pose{\ux,t}\rrvert.
\end{align*}
We first decompose $\widehat{\sm_{j}^{I}}$ as the main term $\widehat{d\mu_j}$ and the tail term $\widehat{d\nu_j}$:
\begin{gather}
\widehat{\sm_{j}^{I}}\pose{\xi}=\widehat{d\mu_j}\pose{\xi}+\widehat{d\nu_j}\pose{\xi}\ \text{where}\nonumber
\\
\widehat{d\mu_j}\pose{\xi}=\chi_0\pose{\frac{\xi}{2^j}}\widehat{\sm_{j}^{I}}\pose{\xi}\ \text{and}\ \widehat{d\nu_j}\pose{\xi}=\chi_0^c\pose{\frac{\xi}{2^j}}\widehat{\sm_{j}^{I}}\pose{\xi}.\label{insideandoutside}
\end{gather} Here, $\chi_0\pose{\xi}$ is a smooth function supported in $\{\frac{1}{4}\leq\llvert \xi\rrvert\leq4\}$ with $0\leq\chi_0\leq1$ and $\chi_0^c=1-\chi_0$. Define the corresponding maximal functions $ M_{j,\ell}$ and $\MV_{j,\ell}$ by
\begin{align}
\MS_{j,\ell}\pose{f}\pose{\ux,t}&=\int_{\R^{d+1}} e^{2\pi i\ux\cdot\uxi}\widehat{d\mu_{j}}\pose{t\pose{\xi+\xi_{d+1}Jx}}\varphi_{j,\ell}\pose{t^2\xi_{d+1}}\widehat{f}\pose{\xi,\xi_{d+1}}d\xi d\xi_{d+1}, \label{inside}\\
 M_{j,\ell}\pose{f}\pose{\ux}&=\sup_{t>0}\llvert \MS_{j,\ell}\pose{f}\pose{\ux,t}\rrvert, \nonumber\\
\MS_{j,\ell}^c\pose{f}\pose{\ux,t}&=\int_{\R^{d+1}} e^{2\pi i\ux\cdot\uxi}\widehat{d\nu_{j}}\pose{t\pose{\xi+\xi_{d+1}Jx}}\varphi_{j,\ell}\pose{t^2\xi_{d+1}}\widehat{f}\pose{\xi,\xi_{d+1}}d\xi d\xi_{d+1}, \label{outside}\\
\MV_{j,\ell}\pose{f}\pose{\ux}&=\sup_{t>0}\llvert \MS_{j,\ell}^c\pose{f}\pose{\ux,t}\rrvert.\nonumber
\end{align}
To show \eqref{L2estimate}, it suffices to prove the followings:
\begin{align}
\llVert  M_{j,\ell}\rrVert_{L^2\pose{\R^{d+1}}\rightarrow L^2\pose{\R^{d+1}}}&\leq C  2^{-j\pose{\frac{d-2}{2}}}2^{-\ell\pose{\frac{d}{2}}}\ \text{and}\label{l2main} \\
\llVert \MV_{j,\ell}\rrVert_{L^2\pose{\R^{d+1}}\rightarrow L^2\pose{\R^{d+1}}}&\leq C  2^{-j\pose{\frac{d-2}{2}}}2^{-\ell\pose{\frac{d}{2}}}.\label{l2tail}
\end{align}
We prove the tail estimate \eqref{l2tail} by using the same method to treat $\MM_1$ (see the Appendix).
In this section and Section 5, we focus on proving the main estimate \eqref{l2main}.

\subsection{Majorizing $L^{\infty}(\R_{+})$ by $L^{2}_{1/2}(\R_{+})$ via Sobolev Embedding}\label{subsectionsobolev}

To handle $L^{\infty}\pose{dt}$, we first define
\begin{align}\label{averagemainS}
\MS_{j,\ell,k}\pose{f}\pose{\ux,t}&=\chi(2^k t)\MS_{j,\ell}\pose{f}\pose{\ux,t}\nonumber\\
&=\chi(2^k t)\int_{\R^{d+1}} e^{2\pi i\ux\cdot\uxi}\widehat{d\mu_{j}}\pose{t\llvert\xi+\xi_{d+1}Jx\rrvert}\varphi_{j,\ell}\pose{t^2\xi_{d+1}}\widehat{f}\pose{\xi,\xi_{d+1}}d\xi d\xi_{d+1}
\end{align}
for $k\in\Z$. Then, it holds that
\begin{equation*}
\llvert M_{j,\ell}\pose{f}\pose{\ux}\rrvert\leq\pose{\sum_{k\in\Z}\sup_{t>0}\llvert\MS_{j,\ell,k}\pose{f}\pose{\ux,t}\rrvert^2}^{\frac{1}{2}}.
\end{equation*}
Using the standard Sobolev embedding argument to the right-hand side of the above, one can see that
\begin{align}\label{sobolev1}
\int_{\R^{d+1}}\llvert M_{j,\ell}\pose{f}\pose{\ux}\rrvert^2 d\ux&\leq\sum_{k\in\Z}\int_{\R^{d+1}}\int_{0}^{\infty}\pose{\pd_{t}\MS_{j,\ell,k}\pose{f}\pose{\ux,t}}\pose{\overline{\MS_{j,\ell,k}\pose{f}\pose{\ux,t}}}dtd\ux\\
\nonumber &+\sum_{k\in\Z}\int_{\R^{d+1}}\int_{0}^{\infty}\pose{\MS_{j,\ell,k}\pose{f}\pose{\ux,t}}\pose{\pd_{t}\overline{\MS_{j,\ell,k}\pose{f}\pose{\ux,t}}}dtd\ux.
\end{align} 
Now, we focus on the first term of the right-hand side of \eqref{sobolev1}:
\begin{equation*}
\int_{\R^{d+1}}\int_{0}^{\infty}\pose{\pd_{t}\MS_{j,\ell,k}\pose{f}\pose{\ux,t}}\pose{\overline{\MS_{j,\ell,k}\pose{f}\pose{\ux,t}}}dtd\ux.
\end{equation*}
To treat the $t$ derivative of the amplitude $\chi(2^k t)\widehat{d\mu_{j}}\pose{t\pose{\xi+\xi_{d+1}Jx}}\varphi_{j,\ell}\pose{t^2\xi_{d+1}}$ in \eqref{averagemainS}, one can observe that
\begin{align*}
\widehat{d\mu_j}\pose{t\xi}&=\chi_0\pose{\frac{t\xi}{2^j}}\pose{\int_{\R^d}\chi\pose{\frac{t\xi-\eta}{2^j}}\widehat{\psi}\pose{\eta}d\eta}\widehat{\sm}\pose{t\llvert \xi\rrvert},
\\
\pd_{t}\left[\chi_0\pose{\frac{t\xi}{2^j}}\right]&=\frac{\llvert\xi\rrvert}{2^j}\widetilde{\chi_0}\pose{\frac{t\xi}{2^j}}
\\
\pd_{t}\left[\widehat{\sm}\pose{t\llvert\xi\rrvert}\right]&=2\pi i\llvert\xi\rrvert\int_{S^{d-1}}e^{2\pi i t\xi\cdot x}\left(\frac{\xi}{\llvert\xi\rrvert}\cdot x\right)\sm\pose{x},
\\
\pd_{t}\left[\int_{\R^d}\chi\pose{\frac{t\xi-\eta}{2^j}}\widehat{\psi}\pose{\eta}d\eta\right]&=\frac{\llvert\xi\rrvert}{2^j}\pose{\int_{\R^d}\widetilde{\chi}\pose{\frac{t\xi-\eta}{2^j}}\widehat{\psi}\pose{\eta}d\eta},
\\
\pd_{t}\left[\varphi_{j,\ell}\pose{t^2\xi_{d+1}}\right]&=\left\{\begin{array}{ll} 2\llvert\frac{t\xi_{d+1}}{2^j}\rrvert\widetilde{\psi}\pose{\frac{\xi_{d+1}}{2^j}}\text{if}\ \ell=0, \\  2\llvert\frac{t\xi_{d+1}}{2^{j+\ell}}\rrvert\widetilde{\chi}\pose{\frac{\xi_{d+1}}{2^{j+\ell}}}\text{if}\ \ell\geq1,\end{array}\right.
\end{align*}
where $\widetilde{\psi}\pose{\xi}$, $\widetilde{\chi}\pose{\xi}$ and $\widetilde{\chi_0}\pose{\xi}$ are smooth functions supported in $\{\llvert \xi\rrvert\lesssim 1\}$, $\{\llvert \xi\rrvert\sim 1\}$, and $\{\frac{1}{4}\leq\llvert \xi\rrvert\leq4\}$, respectively. Then, from $t\sim2^{-k}$, $\llvert\xi+\xi_{d+1}Jx\rrvert\sim2^{j+k}$, and $\llvert\frac{t\xi_{d+1}}{2^{j+\ell}}\rrvert\lesssim 2^{k}$ in \eqref{averagemainS}, it follows that
\begin{align*}
\llvert\pd_{t} \MS_{j,\ell,k}\pose{f}\pose{\ux,t}\rrvert\lesssim 2^{j+k}\llvert\MS_{j,\ell,k}\pose{f}\pose{\ux,t}\rrvert.
\end{align*}
with slight modifications to $\widehat{\sm}$, $\psi$, and $\chi$. This implies a Sobolev embedding in $t$ such that
\begin{align}\label{majorsobolev}
\sup_{t>0}\llvert\MS_{j,\ell,k}\pose{f}\pose{\ux,t}\rrvert^2&\leq 2\int_{0}^{\infty}\llvert\pd_{t}\MS_{j,\ell,k}\pose{f}\pose{\ux,t}\rrvert\llvert \MS_{j,\ell,k}\pose{f}\pose{\ux,t}\rrvert dt\\
&\lesssim 2^{j+k}\int_{0}^{\infty}\llvert \MS_{j,\ell,k}\pose{f}\pose{\ux,t}\rrvert^2 dt.\nonumber
\end{align}
By \eqref{sobolev1} and \eqref{majorsobolev}, we have 
\[
\llVert  M_{j,\ell}\pose{f}\rrVert_{L^{2}\pose{\R^{d+1}}}\lesssim 2^{\frac{j}{2}}\llVert \pose{\sum_{k\in\Z} \llvert 2^{\frac{k}{2}}\MS_{j,\ell,k}\pose{f}\rrvert^2}^{1/2} \rrVert_{L^{2}\pose{\R^{d+1}\times \R_{+}}}. 
\]
Therefore, in order to prove \eqref{l2main}, it suffices to show the following square sum estimate
\begin{equation}\label{squaresum1}
\llVert\pose{\sum_{k\in\Z} \llvert 2^{\frac{k}{2}}\MS_{j,\ell,k}\pose{f}\rrvert^2 }^{1/2} \rrVert_{L^{2}\pose{\R^{d+1}\times \R_{+}}}
\lesssim 2^{-j\pose{\frac{d-1}{2}}}2^{-\ell\pose{\frac{d}{2}}}\llVert f\rrVert_{L^2 \pose{\R^{d+1}}}.
\end{equation}

\subsection{Reduction to Uniform $L^{2}$ Estimates in $\lambda$}
We denote the Euclidean Fourier transform of $f$ as the unified notation $\widehat{f}$ with respect to its various domains.
For instance, if $f\in L^2\pose{\R^{n_1}}$ and $g\in L^2\pose{\R^{n_2}}$, then $\widehat{f}$ and $\widehat{g}$ are defined as
\begin{equation*}
\widehat{f}\pose{\xi}=\int_{\R^{n_1}}e^{-2\pi i x\cdot\xi}f\pose{x}dx\ \text{and}\
\widehat{g}\pose{\eta}=\int_{\R^{n_2}}e^{-2\pi i y\cdot\eta}g\pose{y}dy.
\end{equation*}
Now, we begin with a dimension reducing argument via the Plancherel theorem.

For a scalar $\lambda\in\R$ and $g\in L^2\pose{\R^d}$, we define an operator $\MS_{j,\ell,k}^{\lambda}$ by
\begin{equation}\label{reduced1}
\MS_{j,\ell,k}^{\lambda}\pose{g}\pose{x,t}=\int_{\R^{d}} e^{2\pi i x\cdot\xi}\chi(2^k t)\widehat{d\mu_{j}}\pose{t\pose{\xi+\lambda Jx}}\varphi_{j,\ell}\pose{t^2\lambda}\widehat{g}\pose{\xi}d\xi.
\end{equation}

\begin{proposition}\label{reductionuniformL2}
To show \eqref{squaresum1}, it suffices to prove the following estimate:
\begin{equation}\label{squaresum2}
\sup_{\lambda\in\R}\llVert\pose{\sum_{k\in\Z} \llvert 2^{\frac{k}{2}}\MS_{j,\ell,k}^{\lambda}\pose{\cdot}\rrvert^2 }^{1/2} \rrVert_{L^{2}\pose{\R^{d}}\rightarrow L^{2}\pose{\R^{d}\times \R_{+}}}
\leq C 2^{-j\pose{\frac{d-1}{2}}}2^{-\ell\pose{\frac{d}{2}}},
\end{equation}
where $C$ is independent of $\lambda$.
\end{proposition}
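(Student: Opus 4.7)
The plan is to carry out a dimension-reducing argument by taking the partial Fourier transform of $f$ in the vertical variable $x_{d+1}$. The key structural observation is that in the definition \eqref{averagemainS} of $\MS_{j,\ell,k}$, the variable $\xi_{d+1}$ appears in the amplitude $\widehat{d\mu_{j}}(t(\xi+\xi_{d+1}Jx))\varphi_{j,\ell}(t^2\xi_{d+1})$ purely as a parameter, while the entire $x_{d+1}$-dependence of the output is carried by the oscillatory factor $e^{2\pi i x_{d+1}\xi_{d+1}}$. Thus, if one freezes $\xi_{d+1}=\lambda$ and reads off the $\xi$-integral, one sees exactly the operator $\MS^{\lambda}_{j,\ell,k}$ defined in \eqref{reduced1} acting on the $\xi_{d+1}$-slice of the partial Fourier transform of $f$.

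Concretely, write $\widetilde f(x,\xi_{d+1}):=\int f(x,x_{d+1})e^{-2\pi i x_{d+1}\xi_{d+1}}dx_{d+1}$, so that $\widehat f(\xi,\xi_{d+1})$ is the full Fourier transform of $\widetilde f(\cdot,\xi_{d+1})$ in the $x$-variable. Splitting $d\xi\,d\xi_{d+1}$ in \eqref{averagemainS} and peeling off the $\xi_{d+1}$-exponential yields
\[
\MS_{j,\ell,k}(f)(x,x_{d+1},t)=\int_{\R} e^{2\pi i x_{d+1}\xi_{d+1}}\,\MS^{\xi_{d+1}}_{j,\ell,k}\bigl(\widetilde f(\cdot,\xi_{d+1})\bigr)(x,t)\,d\xi_{d+1}.
\]
In other words, the map $x_{d+1}\mapsto \MS_{j,\ell,k}(f)(x,x_{d+1},t)$ is the inverse Fourier transform of $\xi_{d+1}\mapsto \MS^{\xi_{d+1}}_{j,\ell,k}(\widetilde f(\cdot,\xi_{d+1}))(x,t)$.

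Next, apply Plancherel in $x_{d+1}\leftrightarrow \xi_{d+1}$ for each fixed $(x,t)$, followed by Fubini to interchange the sum in $k$ and the integrals in $x$, $t$, and $\xi_{d+1}$. This gives
\[
\Bigl\|\bigl(\textstyle\sum_{k\in\Z}\llvert 2^{k/2}\MS_{j,\ell,k}(f)\rrvert^2\bigr)^{1/2}\Bigr\|^2_{L^2(\R^{d+1}\times\R_{+})}
=\int_{\R}\Bigl\|\bigl(\textstyle\sum_{k\in\Z}\llvert 2^{k/2}\MS^{\xi_{d+1}}_{j,\ell,k}(\widetilde f(\cdot,\xi_{d+1}))\rrvert^2\bigr)^{1/2}\Bigr\|^2_{L^2(\R^{d}\times\R_{+})}d\xi_{d+1}.
\]
Estimating the inner norm by \eqref{squaresum2} with $\lambda=\xi_{d+1}$ and $g=\widetilde f(\cdot,\xi_{d+1})$, and using that the constant in \eqref{squaresum2} is independent of $\lambda$, one bounds the right-hand side by $C^{2}2^{-j(d-1)}2^{-\ell d}\int_{\R}\|\widetilde f(\cdot,\xi_{d+1})\|^2_{L^2(\R^d)}d\xi_{d+1}$, and a final application of Plancherel in $x_{d+1}$ identifies this with $C^{2}2^{-j(d-1)}2^{-\ell d}\|f\|^2_{L^2(\R^{d+1})}$, which is exactly \eqref{squaresum1}.

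The reduction is essentially bookkeeping once the partial Fourier transform is recognized as the right device; there is no analytic obstacle here. The single point that needs care, and in fact the whole reason for formulating \eqref{squaresum2} as a uniform-in-$\lambda$ statement, is that the supremum over $\lambda$ must be taken \emph{outside} the $d\xi_{d+1}$-integration. This is what justifies replacing the slice norm by the uniform operator norm bound while keeping the decay rate $2^{-j(d-1)/2}2^{-\ell d/2}$ intact. All genuine oscillatory-integral work is then postponed to the estimate \eqref{squaresum2} itself.
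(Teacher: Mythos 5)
Your argument is correct and is essentially identical to the paper's proof: the paper also takes the partial Fourier transform in $x_{d+1}$ (calling the resulting slice $f(\cdot,\lambda)$), exhibits $\MS_{j,\ell,k}(f)(x,x_{d+1},t)$ as the inverse $\xi_{d+1}$-Fourier transform of $\MS_{j,\ell,k}^{\lambda}(f(\cdot,\lambda))(x,t)$, applies Plancherel in $x_{d+1}\leftrightarrow\lambda$ together with Fubini, and then invokes the uniform-in-$\lambda$ estimate \eqref{squaresum2} followed by Plancherel once more. No discrepancy in substance or structure.
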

\begin{proof}
For $f\in L^2\pose{\R^{d+1}}$, by letting $\xi_{d+1}=\lambda$, one can see that 
\begin{equation}\label{plancherelpart1}
\MS_{j,\ell,k}\pose{f}\pose{x,x_{d+1},t}=\int_{\R} e^{2\pi i x_{d+1}\lambda}\MS_{j,\ell,k}^{\lambda}\pose{f\pose{\cdot,\lambda}}\pose{x,t}d\lambda 
\end{equation}
where $f\pose{x,\lambda}=\int_{\R^d}e^{2\pi i x\cdot \xi}\widehat{f}\pose{\xi,\lambda}d\xi$ for each fixed $\lambda\in\R$. Then, by \eqref{plancherelpart1} and the Plancherel theorem for $\lambda$ and $x_{d+1}$, it holds that
\begin{align*}
\int_{\R^d}\int_{\R}\sum_{k\in\Z} \llvert 2^{\frac{k}{2}}\MS_{j,\ell,k}\pose{f}\pose{x,x_{d+1},t}\rrvert^2 dx_{d+1}dx
=\int_{\R^d}\int_{\R}\sum_{k\in\Z} \llvert 2^{\frac{k}{2}}\MS_{j,\ell,k}^{\lambda}\pose{f\pose{\cdot,\lambda}}\pose{x,t}\rrvert^2 d\lambda dx.
\end{align*}
Utilizing \eqref{squaresum2} to the above, we have 
\begin{align*}
\int_{\R}\int_{0}^{\infty}\int_{\R^d}\sum_{k\in\Z} \llvert 2^{\frac{k}{2}}\MS_{j,\ell,k}^{\lambda}\pose{f\pose{\cdot,\lambda}}\pose{x,t}\rrvert^2 dxdtd\lambda&\leq C^22^{-j\pose{d-1}}2^{-\ell d}\int_{\R}\int_{\R^d}\llvert f\pose{x,\lambda}\rrvert^2 dxd\lambda \\
&=C^22^{-j\pose{d-1}}2^{-\ell d}\int_{\R^d}\int_{\R}\llvert f\pose{x,x_{d+1}}\rrvert^2 dx_{d+1}dx\\
&=C^22^{-j\pose{d-1}}2^{-\ell d}\llVert f\rrVert_{L^2\pose{\R^{d+1}}}^2,
\end{align*}
which implies \eqref{squaresum1}.
\end{proof}
Our goal now is to prove \eqref{squaresum2}. First of all, we claim that it suffices to consider $\sup_{\lambda>0}$ instead of $\sup_{\lambda\in\R}$. If $\lambda=0$ in \eqref{reduced1}, then we only need to consider the case of $\ell=0$ from the support condition of $\varphi_{j+\ell}$. Also, for $f\in L^2\pose{\R^{d}}$, it holds that
\[\MS_{j,0,k}^{0}\pose{f}\pose{x,t}=\int_{\R^{d}} e^{2\pi i x\cdot\xi}\chi(2^k t)\widehat{d\mu_{j}}\pose{t\xi}\widehat{f}\pose{\xi}d\xi\] 
which corresponds to the Euclidean case. Thus \eqref{squaresum2} holds from the decay rate $2^{-j\pose{\frac{d-1}{2}}}$ of $\widehat{\sm}$ and the disjointness of supports of $\widehat{d\mu_j}\pose{2^{-k}\cdot}\sim\widehat{\sm_j}\pose{2^{-k}\llvert\cdot\rrvert}$ in $k$ (See Stein's result \cite{Stein} or Bourgain's result \cite{Bourgain1986AveragesIT} when $d=2$). In the proof of $\eqref{squaresum2}$ with $\sup_{\lambda\neq0}$, we will do not use the difference of sign of $\lambda$. Thus we can prove the $\lambda<0$ case by the same way to prove the case of $\lambda>0$. Therefore, to prove \eqref{squaresum2}, it suffices to show that
\begin{equation}\label{squaresum3}
\sup_{\lambda>0}\llVert\pose{\sum_{k\in\Z} \llvert 2^{\frac{k}{2}}\MS_{j,\ell,k}^{\lambda}\pose{f}\rrvert^2 }^{1/2} \rrVert_{ L^{2}\pose{\R^{d}\times \R_{+}}}
\leq C 2^{-j\pose{\frac{d-1}{2}}}2^{-\ell\pose{\frac{d}{2}}}\llVert f\rrVert_{L^2\pose{\R^d}},
\end{equation}
where $C$ is independent of $\lambda$. From now on, all implicit constants $C$ are independent of $\lambda$.

\subsection{Asymptotic Formula for $\sm$}\label{secasymptotic}
Recall that
\[\widehat{d\mu_j}\pose{\xi}=\chi_0\pose{\frac{\xi}{2^j}}\pose{\int_{\R^d}\chi\pose{\frac{\xi-\eta}{2^j}}\widehat{\psi}\pose{\eta}d\eta}\widehat{\sm}\pose{\llvert\xi\rrvert}.\]
For a convenience, define
\begin{equation}\label{perturbterm}
\widetilde{\chi}_j\pose{\xi}=\int_{\R^d}\chi\pose{\frac{\xi-\eta}{2^j}}\widehat{\psi}\pose{\eta}d\eta.
\end{equation}
Since $\chi_0\pose{\frac{\xi}{2^j}}\widehat{\sm}\pose{\llvert\xi\rrvert}$ is supported in $\llvert \xi\rrvert\geq1$ for $j\geq2$, one can apply the asymptotics of the Bessel function $J_{\frac{d-2}{2}}$ (see, e.g., \cite[pp. 338 and 356]{steinhm}) to obtain
\begin{align}\label{asymptoticsm}
\widehat{d\mu_{j}}\pose{\xi}&=\sum_{n=0}^{d}\pose{a_{n}e^{2\pi i \llvert\xi\rrvert}+b_{n}e^{-2\pi i \llvert\xi\rrvert}}\frac{\pose{2\pi}^{-n}\chi_{n}\pose{\frac{\xi}{2^j}}\widetilde{\chi}_j\pose{\xi}}{2^{j\pose{\frac{d-1}{2}+n}}}+\epsilon_{j}\pose{\xi}
\end{align}
where $\chi_{n}\pose{\xi}=\chi_0\pose{\xi}\llvert\xi\rrvert^{-\pose{\frac{d-1}{2}+n}}$ with $a_{0}=\pose{\frac{\pi}{2}}^{-\frac{1}{2}}e^{-i\pose{\frac{d-1}{4}\pi}}$ and $b_{0}=\pose{\frac{\pi}{2}}^{-\frac{1}{2}}e^{i\pose{\frac{d-1}{4}\pi}}$, and the error term $\epsilon_{j}\pose{\xi}$ is defined by 
\begin{align}\label{errorterm}
\epsilon_{j}\pose{\xi}&=\pose{\widehat{d\mu_{j}}\pose{\xi}-\sum_{n=0}^{d}\pose{a_{n}e^{2\pi i \llvert\xi\rrvert}+b_{n}e^{-2\pi i \llvert\xi\rrvert}}\frac{\pose{2\pi}^{-n}\chi_{n}\pose{\frac{\xi}{2^j}}\widetilde{\chi}_j\pose{\xi}}{2^{j\pose{\frac{d-1}{2}+n}}}}\\
&=\pose{\widehat{\sm}\pose{\xi}-\sum_{n=0}^{d}\pose{a_{n}e^{2\pi i \llvert\xi\rrvert}+b_{n}e^{-2\pi i \llvert\xi\rrvert}}\pose{2\pi}^{-n}\llvert\xi\rrvert^{-\pose{\frac{d-1}{2}+n}}}\widetilde{\chi}_j\pose{\xi}\chi_0\pose{\frac{\xi}{2^j}}.\nonumber
\end{align}
Note that the error term satisfies $\epsilon_{j}\pose{\xi}=O\pose{\frac{\chi\pose{\frac{\xi}{2^j}}}{2^{j\pose{\frac{d-1}{2}+d+\frac{1}{2}}}}}$ since $\llvert \widetilde{\chi}_j\pose{\xi}\rrvert\leq C $. Also, the only reason for using $\chi_0$ is to gurantee that $\llvert \xi\rrvert\geq1$ so that we can apply the asymptotics of $\widehat{\sm}$. Thus, from now on, we will simply use $\chi$ instead of $\chi_0$.

Now, we focus on the dominating term $n=0$ in $\sum_{n=0}^{d}$ and on the error term separately. Define
\begin{align*}
\tilde{\MS}_{j,\ell,k}^{\lambda}\pose{f}\pose{x,t}&=2^{\frac{k}{2}}\int_{\R^d} e^{2\pi i \left[x\cdot\xi+t\llvert\xi+\lambda Jx\rrvert\right]} \chi\pose{2^{k}t}\frac{\chi\pose{\frac{t\llvert\xi+\lambda Jx\rrvert}{2^{j}}}\widetilde{\chi}_j\pose{t\pose{\xi+\lambda Jx}}}{2^{j\pose{\frac{d-1}{2}}}}\varphi_{j,\ell}\pose{t^2\lambda}\widehat{f}\pose{\xi}d\xi,
\\
E_{j,\ell,k}^{\lambda}\pose{f}\pose{x,t}&=2^{\frac{k}{2}}\int_{\R^d} e^{2\pi i x\cdot\xi} \chi\pose{2^{k}t}\epsilon_{j}\pose{t\pose{\xi+\lambda Jx}}\varphi_{j,\ell}\pose{t^2\lambda}\widehat{f}\pose{\xi}d\xi. 
\end{align*}

Observing that the first term in the right-hand side of \eqref{asymptoticsm} dominates the other terms corresponding to $n=1,\cdots,d$, we obtain the following pointwise bound:
\[\llvert 2^{\frac{k}{2}}\MS_{j,\ell,k}^{\lambda}\pose{f}\pose{x,t}\rrvert\lesssim \llvert \tilde{\MS}_{j,\ell,k}^{\lambda}\pose{f}\pose{x,t}\rrvert+\llvert E_{j,\ell,k}^{\lambda}\pose{f}\pose{x,t}\rrvert.\]
Hence, \eqref{squaresum3} follows from the two estimates:
\begin{flalign}
\sup_{\lambda>0}\llVert\pose{\sum_{k\in\Z}\llvert \tilde{\MS}_{j,\ell,k}^{\lambda}\pose{f}\rrvert^2}^{1/2}\rrVert_{L^{2}\pose{\R^d\times\R_{+}}}\lesssim 2^{-j\pose{\frac{d-1}{2}}}2^{-\ell\pose{\frac{d}{2}}}\llVert f\rrVert_{L^{2}\pose{\R^d}},\label{squareL2main}
\\
\sup_{\lambda>0}\llVert\pose{\sum_{k\in\Z}\llvert E_{j,\ell,k}^{\lambda}\pose{f}\rrvert^2}^{1/2}\rrVert_{L^{2}\pose{\R^d\times\R_{+}}}\lesssim 2^{-j\pose{\frac{d-1}{2}}}2^{-\ell\pose{\frac{d}{2}}}\llVert f\rrVert_{L^{2}\pose{\R^d}}.\label{squareL2error}
\end{flalign}
Since the error estimate \eqref{squareL2error} can be handled more easily by the similar method as the main estimate—thanks to the strong decay rate $2^{-j\pose{\frac{d-1}{2}+d+\frac{1}{2}}}$ in the error term—we shall focus on proving \eqref{squareL2main} and omit the details for the error estimate.

\subsection{Dilation and More Reductions}
We shall divide the square sum on the left-hand-side of \eqref{squareL2main} as the dichotomy of $\ell$ in \eqref{pieceofd+1}:
 \begin{align*}
\varphi_{j,\ell}\pose{\xi_{d+1}}&=\left\{\begin{array}{ll} \psi\pose{\frac{\xi_{d+1}}{2^j}}\text{if}\ \ell=0, \\  \chi\pose{\frac{\xi_{d+1}}{2^{j+\ell}}}\text{if}\ \ell\geq1.\end{array}\right. 
\end{align*}
For a fixed $\lambda>0$ and $j,\ell>0$, there are at most 5 different $k$'s satisfying the support condition of $\chi\pose{2^k t}\chi\pose{\frac{t^2\lambda}{2^{j+\ell}}}\sim\chi\pose{2^k t}\chi\pose{\frac{\lambda}{2^{j+2k+\ell}}}$. Thus, we have 
\begin{align*}
\llVert\pose{\sum_{k\in\Z}\llvert \tilde{\MS}_{j,\ell,k}^{\lambda}\pose{f}\rrvert^2}^{1/2}\rrVert_{L^{2}\pose{\R^d\times\R_{+}}}\lesssim\sup_{k\in\Z} \llVert\tilde{\MS}_{j,\ell,k}^{\lambda}\pose{f}\rrVert_{L^{2}\pose{\R^d\times\R_{+}}}.
\end{align*}
Hence, to show \eqref{squareL2main}, it suffices to prove that
\begin{align}
\sup_{\lambda>0}\sup_{k\in\Z} \llVert\tilde{\MS}_{j,\ell,k}^{\lambda}\pose{f}\rrVert_{L^{2}\pose{\R^d\times\R_{+}}}&\lesssim 2^{-j\pose{\frac{d-1}{2}}}2^{-\ell\pose{\frac{d}{2}}}\llVert f\rrVert_{L^{2}\pose{\R^d}}\ \text{for}\ \ell>0,\label{mainL2positive}
\\
\sup_{\lambda>0}\llVert \pose{\sum_{k\in\Z} \llvert \tilde{\MS}_{j,0,k}^{\lambda}\pose{f}\rrvert^2}^{1/2}\rrVert_{L^{2}\pose{\R^d\times\R_{+}}}&\lesssim 2^{-j\pose{\frac{d-1}{2}}}\llVert f\rrVert_{L^{2}\pose{\R^d}}.\label{mainL2zero}
\end{align}

 Now, we eliminate the factor $2^{-j\pose{\frac{d-1}{2}}}$ by defining $T^{\lambda}_{j,\ell,k}=2^{j\pose{\frac{d-1}{2}}}\tilde{\MS}_{j,\ell,k}^{\lambda}$, specifically,
\begin{align}\label{Tjkell}
T^{\lambda}_{j,\ell,k}\pose{f}\pose{x,t}&=2^{\frac{k}{2}}\int_{\R^d} e^{2\pi i \left[x\cdot\xi+t\llvert\xi+\lambda Jx\rrvert\right]}\\&\times\chi\pose{2^{k}t}\chi\pose{\frac{t\llvert\xi+\lambda Jx\rrvert}{2^{j}}}\widetilde{\chi}_j\pose{t\pose{\xi+\lambda Jx}}\varphi_{j,\ell}\pose{t^2\lambda}\widehat{f}\pose{\xi}d\xi.\nonumber
\end{align}
Then, the estimates \eqref{mainL2positive} and \eqref{mainL2zero} follow from the estimates
\begin{align}
\sup_{\lambda>0}\sup_{k\in\Z} \llVert T_{j,\ell,k}^{\lambda}\pose{f}\rrVert_{L^{2}\pose{\R^d\times\R_{+}}}&\lesssim 2^{-\ell\pose{\frac{d}{2}}}\llVert f\rrVert_{L^{2}\pose{\R^d}}\ \text{for}\ \ell>0,\label{l>0}
\\
\sup_{\lambda>0}\llVert \pose{\sum_{k\in\Z} \llvert T_{j,0,k}^{\lambda}\pose{f}\rrvert^2}^{1/2}\rrVert_{L^{2}\pose{\R^d\times\R_{+}}}&\lesssim \llVert f\rrVert_{L^{2}\pose{\R^d}}.\label{l<0}
\end{align}

The following dilation invariant property helps us to treat the operator norm of single pieces.
\begin{lemma}[Dilation]\label{dilationop} For every $\pose{j,\ell,k}\in\Z_{+}\times\N_0\times\Z$, we have 
\begin{align}
\llVert T_{j,\ell,k}^{\lambda}\rrVert_{op}&=\llVert T_{j,\ell,0}^{\lambda 2^{-2k}}\rrVert_{op}\label{mainoperatornorm}
\end{align}
where $\llVert \cdot\rrVert_{op}$ denotes the operaotor norm from $L^2\pose{\R^d}$ to $L^2\pose{\R^d\times\R_{+}}$.
\end{lemma}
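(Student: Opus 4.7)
The plan is to exploit the natural Heisenberg-type parabolic dilation $(x,t,\xi,\lambda)\mapsto(2^{-k}x,\,2^{-k}t,\,2^{k}\xi,\,2^{-2k}\lambda)$, which is precisely the symmetry that was already hinted at by the cutoff $\chi(2^k t)$ localizing $t$ to the scale $2^{-k}$. Concretely, in the oscillatory integral \eqref{Tjkell} I would substitute $x=2^{-k}y$, $t=2^{-k}s$ and $\xi=2^{k}\eta$. Since $\lambda Jx=\lambda 2^{-k}Jy$, the composite quantities appearing in the phase and amplitude transform as
\[
\xi+\lambda Jx=2^{k}\bigl(\eta+\lambda 2^{-2k}Jy\bigr),\qquad t\pose{\xi+\lambda Jx}=s\bigl(\eta+\lambda 2^{-2k}Jy\bigr),\qquad t^{2}\lambda=s^{2}\pose{\lambda 2^{-2k}},
\]
and $2^{k}t=s$. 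Writing $\mu:=\lambda 2^{-2k}$, every amplitude factor of $T^{\lambda}_{j,\ell,k}$ --- the cutoff $\chi(2^{k}t)$, the dyadic piece $\chi\pose{t\llvert\xi+\lambda Jx\rrvert/2^{j}}$, the perturbation $\widetilde{\chi}_{j}(t(\xi+\lambda Jx))$, and the vertical cutoff $\varphi_{j,\ell}(t^{2}\lambda)$ --- coincides with the corresponding factor of $T^{\mu}_{j,\ell,0}$ after relabeling, and the phase is preserved verbatim as $y\cdot\eta+s\llvert\eta+\mu Jy\rrvert$.

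The second step is bookkeeping of Jacobians. I would introduce the unitary $L^{2}$-dilation $g(y):=2^{-kd/2}f(2^{-k}y)$, which preserves $\llVert\cdot\rrVert_{L^{2}(\R^{d})}$ and satisfies $\widehat{g}(\eta)=2^{kd/2}\widehat{f}(2^{k}\eta)$, equivalently $\widehat{f}(2^{k}\eta)=2^{-kd/2}\widehat{g}(\eta)$. Combining the prefactor $2^{k/2}$ in \eqref{Tjkell} with the measure factor $d\xi=2^{kd}d\eta$ and this last identity, the substitutions of the first step yield
\[
T^{\lambda}_{j,\ell,k}(f)\bigl(2^{-k}y,\,2^{-k}s\bigr)=2^{k(d+1)/2}\,T^{\mu}_{j,\ell,0}(g)(y,s).
\]
A final change of variables $(x,t)=(2^{-k}y,\,2^{-k}s)$ in the $L^{2}(\R^{d}\times\R_{+})$-norm contributes the factor $2^{-k(d+1)}$ that exactly cancels the square of $2^{k(d+1)/2}$, so that $\llVert T^{\lambda}_{j,\ell,k}(f)\rrVert_{L^{2}(\R^{d}\times\R_{+})}=\llVert T^{\mu}_{j,\ell,0}(g)\rrVert_{L^{2}(\R^{d}\times\R_{+})}$. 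Since $f\mapsto g$ is a bijective isometry of $L^{2}(\R^{d})$, taking the supremum over unit vectors yields \eqref{mainoperatornorm}.

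There is no genuine analytic obstacle here; the lemma is merely a manifestation of the built-in dilation covariance of \eqref{Tjkell}, with $\lambda$ behaving like a weight of order $2$ under the parabolic rescaling because it is paired with $Jx$ in $\xi+\lambda Jx$ and with $t^{2}$ in $\varphi_{j,\ell}(t^{2}\lambda)$. The only mild subtlety requiring care is to use the $L^{2}$-unitary normalization of the $f\mapsto g$ rescaling so that the $2^{kd}$-factors produced by the various Jacobians and Fourier rescalings cancel exactly, leaving no residual scaling weights on either side of the identity.
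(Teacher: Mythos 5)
Your proof is correct and follows essentially the same route as the paper: the parabolic substitution $(x,t,\xi)\mapsto(2^{-k}x,2^{-k}t,2^{k}\xi)$, the observation that every factor in the phase and amplitude of $T^{\lambda}_{j,\ell,k}$ transforms into the corresponding factor of $T^{\lambda 2^{-2k}}_{j,\ell,0}$, and the use of the $L^{2}$-unitary rescaling $f\mapsto 2^{-kd/2}f(2^{-k}\cdot)$ to absorb the Jacobian factors. The bookkeeping of powers of $2$ matches the paper's computation exactly.
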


\begin{proof} Express the phase $\psi^{\lambda}$ and the amplitude $\Phi^{\lambda}$ of $T_{j,\ell,k}^{\lambda}$ in \eqref{Tjkell} as
\begin{align*}
\phi^{\lambda}\pose{x,\xi,t}&=x\cdot\xi+t\llvert\xi+\lambda Jx\rrvert\ \text{and}\\ \Psi^{\lambda}\pose{x,\xi,t}&=\chi\pose{\frac{t\llvert\xi+\lambda Jx\rrvert}{2^{j}}}\widetilde{\chi}_j\pose{t\pose{\xi+\lambda Jx}}\varphi_{j,\ell}\pose{t^2\lambda}.
\end{align*}
Then 
\[T_{j,\ell,k}^{\lambda}\pose{f}\pose{x,t}=2^{\frac{k}{2}}\int_{\R^d} e^{2\pi i \phi^{\lambda}\pose{x,\xi,t}} \chi\pose{2^{k}t}\Psi^{\lambda}\pose{x,\xi,t}\widehat{f}\pose{\xi}d\xi.\]
Using the substitution $\pose{x,\xi,t}\mapsto \pose{2^{-k}x,2^{k}\xi,2^{-k}t}$, we have
\begin{align*}
\phi^{\lambda}\pose{2^{-k}x,2^k\xi,2^{-k}t}&=x\cdot\xi+t\llvert\xi+\lambda2^{-2k} Jx\rrvert=\phi^{\lambda2^{-2k}}\pose{x,\xi,t}\ \text{and}
\\
\Psi^{\lambda}\pose{2^{-k}x,2^k\xi,2^{-k}t}&=\chi\pose{\frac{t\llvert\xi+\lambda2^{-2k} Jx\rrvert}{2^{j}}}\widetilde{\chi}_j\pose{t\llvert\xi+\lambda2^{-2k} Jx\rrvert}\varphi_{j,\ell}\pose{t^2\lambda2^{-2k}}\\&=\Psi^{\lambda2^{-2k}}\pose{x,\xi,t}.
\end{align*}
Hence it holds that
\begin{align*}
T_{j,\ell,k}^{\lambda}\pose{f}\pose{2^{-k}x,2^{-k}t}&=2^{\frac{k}{2}}2^{k\pose{\frac{d}{2}}}\int_{\R^{d}}e^{2\pi i \phi^{\lambda2^{-2k}}\pose{x,\xi,t}}\chi\pose{t}\Psi^{\lambda2^{-2k}}\pose{x,\xi,t}\pose{2^{k\pose{\frac{d}{2}}}\widehat{f}\pose{2^{k}\xi}}d\xi\\
&=2^{\frac{k}{2}}2^{k\pose{\frac{d}{2}}}T_{j,\ell,0}^{\lambda 2^{-2k}}\pose{2^{-k\pose{\frac{d}{2}}}{f}\pose{2^{-k}\cdot}}\pose{x,t}.
\end{align*}
Then, by applying this to the second equation below, we see that 
\begin{align*}
\llVert T_{j,\ell,k}^{\lambda}\pose{f}\rrVert_{L^{2}\pose{\R^d\times\R_{+}}}^2&=\int_{0}^{\infty}\int_{\R^d} \llvert T_{j,\ell,k}^{\lambda}\pose{f}\pose{2^{-k}x,2^{-k}t}\rrvert^2 2^{-kd}2^{-k}dx dt\\
&=\int_{0}^{\infty}\int_{\R^d} \llvert T_{j,\ell,0}^{\lambda 2^{-2k}}\pose{2^{-\frac{d}{2}k}{f}\pose{2^{-k}\cdot}}\pose{x,t}\rrvert^2 dxdt\\&=\llVert T_{j,\ell,0}^{\lambda 2^{-2k}}\pose{2^{-\frac{d}{2}k}{f}\pose{2^{-k}\cdot}}\rrVert_{L^{2}\pose{\R^d\times\R_{+}}}^2.
\end{align*}
Thus, together with $\llVert 2^{-\frac{d}{2}k}{f}\pose{2^{-k}\cdot}\rrVert_{L^{2}\pose{\R^{d}}}=\llVert f\rrVert_{L^{2}\pose{\R^d}}$, we obain \eqref{mainoperatornorm}. 
\end{proof}

\section{$L^2$ Estimates for the Main Terms}\label{sectionl2estimatesformainterms}
 In this section, we shall prove \eqref{l>0} and \eqref{l<0}:
\begin{theorem} Let $\lambda>0$ and let $T_{j,\ell,k}^{\lambda}$ be defined as in \eqref{Tjkell}. Then, it holds that
\begin{align}
\sup_{\lambda>0}\sup_{k\in\Z} \llVert T_{j,\ell,k}^{\lambda}\pose{f}\rrVert_{L^{2}\pose{\R^d\times\R_{+}}}&\lesssim 2^{-\ell\pose{\frac{d}{2}}}\llVert f\rrVert_{L^{2}\pose{\R^d}}\ \text{for}\ \ell>0, \label{l>0sec5}
\\
\sup_{\lambda>0}\llVert \pose{\sum_{k\in\Z} \llvert T_{j,0,k}^{\lambda}\pose{f}\rrvert^2}^{1/2}\rrVert_{L^{2}\pose{\R^d\times\R_{+}}}&\lesssim \llVert f\rrVert_{L^{2}\pose{\R^d}},\label{l=0sec5}
\end{align}
where the implicit constants are independent of $j$ and $\ell$.
\end{theorem}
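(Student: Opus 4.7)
The plan is to apply the dilation identity of Lemma~\ref{dilationop} first, reducing both estimates to the scale $k=0$. For \eqref{l>0sec5}, the joint support constraints $\chi(t)\varphi_{j,\ell}(t^{2}\lambda)\neq 0$ force $t\sim 1$ and $\lambda\sim 2^{j+\ell}$, so the task reduces to the uniform-in-$\lambda$ single-piece bound $\llVert T^{\lambda}_{j,\ell,0}\rrVert_{L^{2}\to L^{2}}\lesssim 2^{-\ell d/2}$. For \eqref{l=0sec5}, the same reduction leaves a square-function in $k$ that I would treat by Plancherel and near-orthogonality.

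The central structural observation is the skew-symmetry identity $x\cdot Jx=0$. The substitution $\eta=\xi+\lambda Jx$ transforms the phase $x\cdot\xi + t|\xi+\lambda Jx|$ into the Euclidean phase $x\cdot\eta + t|\eta|$, and unfolding $\widehat f(\eta-\lambda Jx)$ rewrites the operator as a twisted Euclidean convolution:
\[
T^{\lambda}_{j,\ell,0}(f)(x,t)=\chi(t)\varphi_{j,\ell}(t^{2}\lambda)\int e^{2\pi i\lambda\langle Jx,\,y\rangle}\,K_{t}(x-y)\,f(y)\,dy,
\]
where $K_{t}(z)=\int e^{2\pi i[z\cdot\eta+t|\eta|]}\chi(t|\eta|/2^{j})\widetilde{\chi}_{j}(t\eta)\,d\eta$ is the kernel of a Euclidean frequency-localized spherical average at scale $|\eta|\sim 2^{j}$. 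I would then carry out the $T^{*}T$ computation; using $\langle Jy,v\rangle = -\langle y,Jv\rangle$, the kernel becomes
\[
M(y,y')=e^{-2\pi i\lambda\langle y,\,Jv\rangle}\int\chi(t)^{2}\varphi_{j,\ell}(t^{2}\lambda)^{2}\,\mathcal{F}\bigl[\overline{K_{t}(\cdot)}\,K_{t}(\cdot-v)\bigr]\!(\lambda Jv)\,dt,
\]
where $v=y'-y$. Writing the inner Fourier transform as the convolution of $\widehat{K_{t}}$ (supported on $|\zeta|\sim 2^{j}$) with its translate by $\lambda Jv$ shows that $M(y,y')$ vanishes unless $|v|\lesssim 2^{j}/\lambda\sim 2^{-\ell}$. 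Stationary phase in $\zeta$ on the two spherical shells, combined with the $t$-integration (which contributes $O(1)$ since $\lambda\sim 2^{j+\ell}$ forces $t\sim 1$), yields the pointwise bound $|M(y,y')|\lesssim 2^{-\ell d}\mathbf{1}_{\{|v|\lesssim 2^{-\ell}\}}$; Schur's test then produces $\llVert(T^{\lambda}_{j,\ell,0})^{*}T^{\lambda}_{j,\ell,0}\rrVert_{\mathrm{op}}\lesssim 2^{-\ell d}$, which is \eqref{l>0sec5}. For \eqref{l=0sec5} the same $T^{*}T$ calculation at $\ell=0$ gives $\sup_{\lambda}\llVert T^{\lambda}_{j,0,0}\rrVert_{\mathrm{op}}\lesssim 1$, and the square-function over $k$ is controlled by observing that after the dilation reduction the essential frequency supports of $T^{\lambda}_{j,0,k}(f)$ in the $\eta$-variable are contained in disjoint dyadic shells $|\eta|\sim 2^{j+k}$, so Plancherel converts $\bigl(\sum_{k}|\cdot|^{2}\bigr)^{1/2}$ into a single $L^{2}$ norm bounded by $\llVert f\rrVert_{L^{2}}$.

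The principal obstacle is the sharp pointwise bound $|M(y,y')|\lesssim 2^{-\ell d}$. A naive estimate using $\|\widehat{K_{t}}\|_{\infty}\leq 1$ and the $\eta$-annulus volume $\sim 2^{jd}$ produces only $|M(y,y')|\lesssim 2^{jd}\mathbf{1}_{\{|v|\lesssim 2^{-\ell}\}}$, short by $2^{jd}$. The missing gain arises from the oscillatory factor $e^{2\pi it(|\lambda Jv-\zeta|-|\zeta|)}e^{2\pi iv\cdot\zeta}$ in the convolution $\widehat{K_{t}}*\widehat{K_{t}}(-\cdot)(\lambda Jv)$, whose phase is non-stationary on the shell $|\zeta|\sim 2^{j}$ because the constraint $|v|\lesssim 2^{-\ell}$ forces $|v|+|\lambda Jv|/2^{j}\ll 1$. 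Repeated integration by parts in the $(d-1)$ tangential directions on the sphere — whose tangent spaces are controlled by the Hessian of $|\zeta|$ on $|\zeta|=2^{j}$ — delivers the necessary $2^{-j(d-1)}$ factor, and the remaining radial integration is absorbed by the modulation $v\cdot\zeta$, producing the required $2^{-jd}$ cancellation and completing the proof.
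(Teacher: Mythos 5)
Your substitution $\eta=\xi+\lambda Jx$ and the skew-symmetry simplification $x\cdot Jx=0$ are correct, and the $T^{*}T$ framework is the right tool — the underlying structural observation is the same one the paper exploits. However, the pointwise bound on $M(y,y')$ that your argument rests on does not hold, so the proof does not close. At the diagonal, Plancherel gives
\[
M(y,y)=\int \chi(t)^{2}\varphi_{j,\ell}(t^{2}\lambda)^{2}\,\mathcal{F}\bigl[\lvert K_{t}\rvert^{2}\bigr](0)\,dt
=\int \chi(t)^{2}\varphi_{j,\ell}(t^{2}\lambda)^{2}\,\lVert K_{t}\rVert_{L^{2}(\R^{d})}^{2}\,dt\sim 2^{jd},
\]
since $\lVert K_{t}\rVert_{L^{2}}^{2}=\lVert\widehat{K_{t}}\rVert_{L^{2}}^{2}$ is comparable to the volume of the annulus $\{\lvert\eta\rvert\sim 2^{j}\}$. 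This is off by a factor $2^{(j+\ell)d}$ from your claimed $\lvert M\rvert\lesssim 2^{-\ell d}$, and even the weaker $\lvert M\rvert\lesssim 1$ fails. The mechanism you invoke cannot repair this: the tangential $\zeta$-gradient of the phase $t(\lvert\zeta+\eta\rvert-\lvert\zeta\rvert)-v\cdot\zeta$ is approximately $t\,\eta^{\perp}/2^{j}-v^{\perp}$, which has a nontrivial zero set on the annulus, so repeated integration by parts does not yield a uniform $2^{-j(d-1)}$ gain, and stationary phase over a $d$-dimensional domain gives at best a $2^{-jd/2}$ gain. Your Schur step is also mis-applied: $\lvert M\rvert\lesssim 2^{-\ell d}$ supported on $\lvert v\rvert\lesssim 2^{-\ell}$ would give $\lVert T^{*}T\rVert_{op}\lesssim 2^{-2\ell d}$, not $2^{-\ell d}$. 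By contrast, the paper's proof of \eqref{singlel>0} runs the $T^{*}T$ argument on the frequency side: the kernel $K_{\ell}(\xi,\eta)$ is obtained by integrating over $(x,t)$, and the $(x,t)$-gradient of the phase difference is shown to be bounded below \emph{uniformly} by $\lvert J(\xi-\eta)\rvert+\tfrac{\lambda}{2^{j}}\lvert\xi-\eta\rvert$ (via a Hessian row reduction that uses $\langle Jw,w\rangle=0$), so plain integration by parts closes the argument with no stationary-phase input. This uniform non-degeneracy is exactly what your spatial-side version lacks.

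For \eqref{l=0sec5} the gap is sharper still: the operators $T_{j,0,k}^{\lambda}$ do \emph{not} have pairwise disjoint essential Fourier supports, because the dyadic localization is $\lvert\xi+\lambda Jx\rvert\sim 2^{j+k}$, an $x$-dependent shift of $\xi$; neither the inputs nor the outputs are frequency-disjoint in $k$ for any fixed variable, so Plancherel does not convert the square function into a single $L^{2}$ norm. The paper resolves this with Cotlar--Stein, Proposition \ref{prop3.2}, where the almost-orthogonality $\lVert T_{j,0,k_{1}}^{\lambda}(T_{j,0,k_{2}}^{\lambda})^{*}\rVert_{op}\lesssim 2^{-\lvert k_{1}-k_{2}\rvert}$ is proved by a genuine $TT^{*}$ kernel estimate that carefully tracks the two incommensurate scales $t\sim 2^{-k_{1}}$, $s\sim 2^{-k_{2}}$; nothing of comparable strength is supplied by your near-orthogonality remark.
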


\subsection{Proof of \eqref{l>0sec5}}
The following proposition helps us to treat both of \eqref{l>0sec5} and \eqref{l=0sec5}.
\begin{proposition}\label{prop3.1} For all positive integers $\ell>0$, it holds that
\begin{equation}
\sup_{\lambda>0}\sup_{k\in\Z}\llVert T_{j,\ell,k}^{\lambda}\rrVert_{op}\leq C2^{-\frac{d}{2}\ell}\label{singlel>0}
\end{equation}
where $C>0$ is a constant independent of $j$ and $\ell$. Furthermore, one can get
\begin{equation}
\sup_{\lambda>0}\sup_{k\in\Z}\llVert T_{j,0,k}^{\lambda}\rrVert_{op}\leq C\label{singlezero}
\end{equation}
where $C>0$ is a constant independent of $j$ and $\ell$.
\end{proposition}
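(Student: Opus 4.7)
The plan proceeds in four steps.

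First, by Lemma~\ref{dilationop}, $\llVert T^{\lambda}_{j,\ell,k}\rrVert_{op}=\llVert T^{\lambda 2^{-2k}}_{j,\ell,0}\rrVert_{op}$, so it suffices to establish the claimed bound for $T^{\lambda}_{j,\ell,0}$ uniformly in $\lambda>0$.  Substituting $\eta=\xi+\lambda Jx$ in \eqref{Tjkell} and using the skew-symmetry identity $\langle x,Jx\rangle=0$ to cancel the cross term, I would rewrite
\begin{equation*}
T^{\lambda}_{j,\ell,0}(f)(x,t)=\chi(t)\varphi_{j,\ell}(t^{2}\lambda)\,S^{\lambda}_{t}f(x),\qquad S^{\lambda}_{t}f(x):=\int f(y)\,e^{2\pi i\lambda\langle y,Jx\rangle}\,I_{t}(x-y)\,dy,
\end{equation*}
with $I_{t}(u)=\int e^{2\pi i(u\cdot\eta+t|\eta|)}\chi(t|\eta|/2^{j})\widetilde{\chi}_{j}(t\eta)\,d\eta$.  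Since $\chi(t)\varphi_{j,\ell}(t^{2}\lambda)$ localizes $t$ to an interval of length $O(1)$ (forcing $\lambda\sim 2^{j+\ell}$ when $\ell\geq 1$), the problem reduces to bounding $\sup_{t}\llVert S^{\lambda}_{t}\rrVert_{L^{2}\to L^{2}}\lesssim 2^{-d\ell/2}$ for $\ell\geq 1$ and by $1$ for $\ell=0$.

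Second, I would compute the $TT^{\ast}$ kernel of $S^{\lambda}_{t}$.  Using $\langle z,Jz\rangle=0$ one more time, a direct calculation gives
\begin{equation*}
\bigl(S^{\lambda}_{t}(S^{\lambda}_{t})^{\ast}\bigr)(x_{1},x_{2})=e^{-2\pi i\lambda\langle x_{1},J(x_{2}-x_{1})\rangle}\,G(x_{2}-x_{1}),
\end{equation*}
where
\begin{equation*}
G(\Delta)=\int e^{2\pi i\bigl[-\Delta\cdot\eta+t(|\eta+\lambda J\Delta|-|\eta|)\bigr]}\,m_{t}(\eta)\overline{m_{t}(\eta+\lambda J\Delta)}\,d\eta,\quad m_{t}(\eta)=\chi(t|\eta|/2^{j})\widetilde{\chi}_{j}(t\eta).
\end{equation*}
The joint support of the two $m_{t}$-factors restricts $|\Delta|\lesssim 2^{j}/\lambda=2^{-\ell}$.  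Passing to polar coordinates $\eta=r\omega$ and applying the spherical asymptotics of Section~\ref{secasymptotic}, the angular integration produces $\widehat{d\sigma}(r\Delta+t\lambda J\Delta)$; the orthogonality $\Delta\perp J\Delta$ together with $r\sim 2^{j}$, $t\sim 1$, $\lambda\sim 2^{j+\ell}$ yields $|r\Delta+t\lambda J\Delta|\sim 2^{j+\ell}|\Delta|$ on the support, and combining the sphere decay $(1+|v|)^{-(d-1)/2}$ with the radial integration gives
\begin{equation*}
|G(\Delta)|\lesssim 2^{jd}\,\bigl(1+2^{j+\ell}|\Delta|\bigr)^{-(d-1)/2}.
\end{equation*}

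The main obstacle is the last step:  converting this pointwise estimate into the operator-norm bound $\llVert S^{\lambda}_{t}(S^{\lambda}_{t})^{\ast}\rrVert_{op}\lesssim 2^{-\ell d}$.  A direct Schur test on $|G|$ yields only $\int|G|\,d\Delta\lesssim 2^{j(d+1)/2-\ell d}$, which loses the unacceptable factor $2^{j(d+1)/2}$ over the target.  The missing gain must come from the chirp $e^{-2\pi i\lambda\langle x_{1},J(x_{2}-x_{1})\rangle}$, which encodes the Heisenberg twist:  taking the Fourier transform in $x_{1}$ converts $S^{\lambda}_{t}(S^{\lambda}_{t})^{\ast}$ into the operator $\widehat{f}(\xi)\mapsto\int G(\Delta)e^{2\pi i\Delta\cdot\xi}\widehat{f}(\xi-\lambda J\Delta)\,d\Delta$, and the change of variables $u=\lambda J\Delta$ introduces a Jacobian $|\lambda|^{-d}=2^{-(j+\ell)d}$ which is precisely of the right order to absorb the excess $2^{j(d+1)/2}$ once combined with the pointwise decay of $G$.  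The cleanest way I see to implement this balancing is the almost-orthogonality argument foreshadowed at the end of Section~\ref{sectionreductionl2}:  one dyadically decomposes the $\Delta$-integral, rescales each piece, and applies Cotlar--Stein to sum the resulting almost-orthogonal operators.  An equivalent path is via the Schr\"odinger representation $\pi_{\lambda}$ of the Heisenberg group, for which the twisted convolution $S^{\lambda}_{t}$ corresponds to operator composition and $\llVert\pi_{\lambda}(I_{t})\rrVert_{op}$ is the quantity we must control; the $|\lambda|^{-d/2}$ dimensional factor in the Weyl--Plancherel identity on $L^{2}(\R^{d/2})$ is precisely the source of the $2^{-\ell d/2}$ gain.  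The case $\ell=0$ of \eqref{singlezero} is obtained by the same chain with $\ell$ set to $0$ and yields $\llVert T^{\lambda}_{j,0,0}\rrVert_{op}\lesssim 1$ uniformly in $\lambda>0$.
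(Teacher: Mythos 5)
Your opening reductions are sound: the dilation Lemma~\ref{dilationop}, the identity $\langle x,Jx\rangle=0$ to kill the cross term, the factorization $T^{\lambda}_{j,\ell,0}(f)(x,t)=\chi(t)\varphi_{j,\ell}(t^{2}\lambda)S^{\lambda}_{t}f(x)$ and the reduction to $\sup_{t}\lVert S^{\lambda}_{t}\rVert_{L^{2}\to L^{2}}\lesssim 2^{-\ell d/2}$ with $\lambda\sim 2^{j+\ell}$, and the $TT^{\ast}$ computation yielding the chirp $e^{-2\pi i\lambda\langle x_{1},J(x_{2}-x_{1})\rangle}$ times an amplitude $G(x_{2}-x_{1})$ with $|G(\Delta)|\lesssim 2^{jd}(1+2^{j+\ell}|\Delta|)^{-(d-1)/2}$. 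But the argument stops at the decisive step, and the bridge you sketch does not bear weight.

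The claim that the change of variables $u=\lambda J\Delta$ introduces a Jacobian $\lambda^{-d}$ which is "precisely of the right order to absorb the excess $2^{j(d+1)/2}$" is incorrect. Writing the transformed operator as $h(\xi)\mapsto\int G(\Delta)\,e^{2\pi i\Delta\cdot\xi}\,h(\xi+\lambda J\Delta)\,d\Delta$ and applying Schur's test, the Jacobian from $u=\lambda J\Delta$ is cancelled exactly by the Jacobian of the corresponding change of variables in the $\xi'$-integration; one is returned to $\int|G(\Delta)|\,d\Delta$ and still loses $2^{j(d+1)/2}$. The remaining suggestions — dyadic decomposition of the $\Delta$-integral with Cotlar--Stein, or passing to the Schr\"odinger representation and bounding $\lVert\pi_{\lambda}(I_{t})\rVert_{op}$ via Weyl--Plancherel — are not finishing touches on a near-complete argument: they name the work that remains, and you do not do it. In particular, controlling the Weyl transform of $I_{t}$ is exactly the content of $\lVert S^{\lambda}_{t}\rVert_{L^{2}\to L^{2}}$, so the reformulation alone proves nothing. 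The proposal therefore has a genuine gap at its core.

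The difficulty is structural, and it helps to see why the paper's route closes while yours stalls. The paper uses $T^{\ast}T$ rather than $TT^{\ast}$: it writes the kernel $K_{\ell}(\xi,\eta)$ of $\bigl(T^{\lambda}_{j,\ell,0}\bigr)^{\ast}T^{\lambda}_{j,\ell,0}$ as an oscillatory integral over $(x,t)$ and integrates by parts $d+1$ times along $b=\nabla_{x,t}\bigl(\phi(x,\xi,t)-\phi(x,\eta,t)\bigr)$. The lower bound \eqref{3.10}, namely $|b|\sim|J(\xi-\eta)|+\tfrac{\lambda}{2^{j}}|\xi-\eta|$, comes from the Hessian row reduction in which the $\partial_{t}$-row $\tfrac{\lambda}{2^{j}}\tfrac{\xi+x}{|\xi+x|}$ supplies the radial direction missing from the $x$-block, and skew-symmetry of $J$ makes the $J$-term decouple orthogonally from the radial term (see \eqref{3.8}). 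This gives $(d+1)$-fold decay in $\xi-\eta$ (equation \eqref{3.11}), which is enough for Schur. By freezing $t$ and passing to $\sup_{t}S^{\lambda}_{t}$ before taking $TT^{\ast}$, you discard the $t$-oscillation at the outset; the corresponding gain is then hidden entirely in the chirp factor, which your pointwise bound on $G$ ignores. That is exactly why the sphere decay exponent $(d-1)/2$ does not suffice on its own, and why a genuine almost-orthogonality or representation-theoretic argument — which you have not supplied — would be required to close your approach.
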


\begin{proof}[\textbf{Proof of \eqref{singlel>0}}] By the dilation property \eqref{mainoperatornorm} in Lemma \ref{dilationop} and by replacing $\lambda$ with $\lambda2^{-2k}$, we have
\begin{equation*}
 \llVert T_{j,\ell,k}^{\lambda}\rrVert_{op}= \llVert T_{j,\ell,0}^{\lambda2^{-2k}}\rrVert_{op}\leq\sup_{\lambda>0}\llVert T_{j,\ell,0}^{\lambda}\rrVert_{op}
\end{equation*}
for any $k\in\Z$. Thus, it suffices to treat the operator norm $\llVert T_{j,\ell,0}^{\lambda}\rrVert_{op}$. Note that $J$ is invertible, $J^{-1}=-J$, and $\llvert \det J\rrvert=1$. By applying the substitution $\pose{\xi,x}\mapsto \pose{2^{j}\xi,\pose{J^{-1}}\frac{2^j}{\lambda}x}$ to $T_{j,\ell,0}^{\lambda}$ in \eqref{Tjkell}, it suffices to consider the operator norm of the following operator $T_{j,\ell,0}^{\lambda}$ defined by
\begin{align*}
T_{j,\ell,0}^{\lambda}\pose{f}\pose{x,t}&=\pose{\frac{2^{2j}}{\lambda}}^{\frac{d}{2}}\\&\times\int_{\R^d} e^{2\pi i \pose{\frac{2^{2j}}{\lambda}}\left[J^{-1}x\cdot\xi+\pose{\frac{\lambda}{2^j}}t\llvert\xi+x\rrvert\right]} \chi\pose{t}\chi\pose{t\llvert\xi+x\rrvert}\widetilde{\chi}_j\pose{2^j t\llvert\xi+x\rrvert}\varphi_{j,\ell}\pose{t^2\lambda}\widehat{f}\pose{\xi}d\xi.
\end{align*}

In order to apply the $T^{*}T$ method, we express the kernel $K_{\ell}\pose{\xi,\eta}$ of $\pose{T_{j,\ell,0}^{\lambda}}^{*}T_{j,\ell,0}^{\lambda}$ as
\begin{equation*}
K_{\ell}\pose{\xi,\eta}=\pose{\frac{2^{2j}}{\lambda}}^{d}\int_{0}^{\infty}\int_{\R^d} e^{2\pi i  \pose{\frac{2^{2j}}{\lambda}}\pose{\phi\pose{x,\xi,t}-\phi\pose{x,\eta,t}}} \Psi\pose{x,\xi,\eta,t}dxdt,
\end{equation*}
where the phase $\phi$ and the amplitude $\Psi$ are given by
\begin{align*}
\phi\pose{x,\xi,t}&=J^{-1}x\cdot\xi+\frac{\lambda}{2^j}t\llvert\xi+x\rrvert\ \text{and} \\\Psi\pose{x,\xi,\eta,t}&=\chi\pose{t}\chi\pose{t\llvert\xi+x\rrvert}\chi\pose{t\llvert\eta+x\rrvert}\widetilde{\chi}_j\pose{2^j t\llvert\xi+x\rrvert}\widetilde{\chi}_j\pose{2^j t\llvert\eta+x\rrvert}\varphi_{j,\ell}\pose{t^2\lambda}.
\end{align*} 
By a unity of participation for $\xi+x$, one can assume that $\llvert\pose{\xi+x}-\pose{\eta+x}\rrvert=\llvert\xi-\eta\rrvert$ is sufficiently small.

 Next, we consider the Hessian matrix $H_{x,t,\xi}\pose{\phi}\pose{x,\xi,t}$ computed by
\[H_{x,t,\xi}\pose{\phi}\pose{x,\xi,t}=\begin{pmatrix}-J^{-1}+\frac{\lambda}{2^j}\frac{t}{\llvert\xi+x\rrvert}I-\frac{\lambda}{2^j}\frac{t}{\llvert\xi+x\rrvert^3}\begin{pmatrix}\pose{\xi_{1}+x_{1}}\pose{\xi+x}\\\vdots\\\pose{\xi_{d}+x_{d}}\pose{\xi+x}\end{pmatrix}\\ \frac{\lambda}{2^j}\frac{\xi+x}{\llvert\xi+x\rrvert} \end{pmatrix}.\]
Let $E_i$ be the $\pose{d+1}\times \pose{d+1}$ row addition matrix by replacing $row_{i}$ with $row_{i}+\frac{t\pose{\xi_i+x_i}}{\llvert\xi+x\rrvert^2}row_{d+1}$ for $i\in\left[d\right]=\left\{1,\cdots,d\right\}$. Then, with $-J^{-1}=J$, we have
\[E_{d}\cdots E_{1}\pose{H_{x,t,\xi}\pose{\phi}\pose{x,\xi,t}}=\begin{pmatrix}J+\frac{\lambda}{2^j}\frac{t}{\llvert\xi+x\rrvert}\\ \frac{\lambda}{2^j}\frac{\xi+x}{\llvert\xi+x\rrvert} \end{pmatrix}.
\]
Using the sub-multiplicative property of the matrix norm, we have 
\begin{equation}\label{3.6}
\llvert\begin{pmatrix}J+\frac{\lambda}{2^j}\frac{t}{\llvert\xi+x\rrvert}\\ \frac{\lambda}{2^j}\frac{\xi+x}{\llvert\xi+x\rrvert}\end{pmatrix}\pose{\xi-\eta}\rrvert\leq\pose{\prod_{i=1}^d\llVert E_i\rrVert_2}\llvert\pose{H_{x,t,\xi}\pose{\phi}\pose{x,\xi,t}}\pose{\xi-\eta}\rrvert
\end{equation}
where $\llVert E_i\rrVert_2$ is the usual matrix norm defined by 
\[\llVert E_i \rrVert_2=\sup_{x\in S^{d-1}}\llvert E_{i}x\rrvert.\]
From the support condition of $\Psi$, we have $1/2\leq t\leq2$ and $1/8\leq \llvert\xi+x\rrvert\leq8$. Thus, for each $i\in\left[d\right]$, it holds that $\frac{t\llvert\xi_i+x_i\rrvert}{\llvert\xi+x\rrvert^2}\leq16$ and $\llVert E_i\rrVert_2\leq16d$. Then, the previous inequality \eqref{3.6} becomes
\begin{equation}\label{3.7}
\llvert\begin{pmatrix}J+\frac{\lambda}{2^j}\frac{t}{\llvert\xi+x\rrvert}\\ \frac{\lambda}{2^j}\frac{\xi+x}{\llvert\xi+x\rrvert} \end{pmatrix}\pose{\xi-\eta}\rrvert\leq C \llvert\pose{H_{x,t,\xi}\pose{\phi}\pose{x,\xi,t}}\pose{\xi-\eta}\rrvert
\end{equation}
where $C >0$ is a constant. Since $J$ is skew-symmetric, we have $\langle Jx, x\rangle=0$ for any $x\in\R^d$. Hence,
\begin{align*}
\llvert\pose{J+\frac{\lambda}{2^j}\frac{t}{\llvert\xi+x\rrvert}I}\pose{\xi-\eta}\rrvert^2&=\llvert J\pose{\xi-\eta}\rrvert^2+\llvert\pose{\frac{\lambda}{2^j}\frac{t}{\llvert\xi+x\rrvert}}\pose{\xi-\eta}\rrvert^2.\nonumber
\end{align*}
By \eqref{3.7}, this implies that
\begin{align}\label{3.8}
3C \llvert\pose{H_{x,t,\xi}\pose{\phi}\pose{x,\xi,t}}\pose{\xi-\eta}\rrvert\geq\llvert J\pose{\xi-\eta}\rrvert+\frac{\lambda}{2^j}\llvert\xi-\eta\rrvert.
\end{align}

On the other hand, one can compute 
\begin{equation}\label{meanvalue}
\nabla_{x,t}\pose{\phi\pose{x,\xi,t}-\phi\pose{x,\eta,t}}=\begin{pmatrix}J\pose{\xi-\eta}+\frac{\lambda}{2^j}t\pose{\frac{\xi+x}{\llvert\xi+x\rrvert}-\frac{\eta+x}{\llvert\eta+x\rrvert}}\\ \frac{\lambda}{2^j}\pose{\llvert\xi+x\rrvert-\llvert\eta+x\rrvert}\end{pmatrix}.
\end{equation}
Then, there exists a constant $C>0$ independent of $x$ and $t$ satisfying
\begin{equation*}
\llvert\nabla_{x,t}\pose{\phi\pose{x,\xi,t}-\phi\pose{x,\eta,t}}-H_{x,t,\xi}\pose{\phi}\pose{x,\xi,t}\pose{\xi-\eta}\rrvert\leq\frac{\lambda}{2^j}C\llvert\xi-\eta\rrvert^2.
\end{equation*}
By combining this with \eqref{3.8}, for sufficiently small $\llvert\xi-\eta\rrvert$, we have
\begin{equation}
\llvert\nabla_{x,t}\pose{\phi\pose{x,\xi,t}-\phi\pose{x,\eta,t}}\rrvert\sim\llvert J\pose{\xi-\eta}\rrvert+\frac{\lambda}{2^j}\llvert\xi-\eta\rrvert.\label{3.10}
\end{equation}

Now, set a vector $b=\nabla_{x,t}\pose{\phi\pose{x,\xi,t}-\phi\pose{x,\eta,t}}$ and define an operator \[\mathcal{L}\left[f\right]=\frac{1}{2\pi i \pose{\frac{2^{2j}}{\lambda}}}\dotprod{\frac{b}{\llvert b\rrvert^2}}{\nabla_{x,t}}\pose{f}.\] Then, it holds that $\mathcal{L}\left[e^{2\pi i \pose{\frac{2^{2j}}{\lambda}}\pose{\phi\pose{x,\xi,t}-\phi\pose{x,\eta,t}}}\right]=e^{2\pi i \pose{\frac{2^{2j}}{\lambda}}\pose{\phi\pose{x,\xi,t}-\phi\pose{x,\eta,t}}}$.
By applying the integration by parts $d+1$ times with respect to $dxdt$ along the direction $\frac{b}{\llvert b\rrvert^2}$, one can see that
\begin{align}\label{integrationbypart}
K_{\ell}\pose{\xi,\eta}&=\pose{\frac{2^{2j}}{\lambda}}^{d}\int_{0}^{\infty}\int_{\R^d} \pose{\mathcal{L}}^{d+1}\left[e^{2\pi i \pose{\frac{2^{2j}}{\lambda}}\pose{\phi\pose{x,\xi,t}-\phi\pose{x,\eta,t}}}\right]\Psi\pose{x,\xi,\eta,t}dxdt\nonumber\\
&=\pose{\frac{2^{2j}}{\lambda}}^{d}\int_{0}^{\infty}\int_{\R^d} e^{2\pi i  \pose{\frac{2^{2j}}{\lambda}}\pose{\phi\pose{x,\xi,t}-\phi\pose{x,\eta,t}}}\pose{\mathcal{L}^t}^{d+1}\left[\Psi\right]\pose{x,\xi,\eta,t}dxdt
\end{align}
where the transpose of $\mathcal{L}$ is given by
\begin{equation*}
\mathcal{L}^{t}\left[\Psi\right]=\pd_{t}\pose{\frac{b_{d+1}\Psi}{2\pi i\pose{\frac{2^{2j}}{\lambda}}\llvert b\rrvert^2}}+\sum_{k=1}^{d}\pd_{x_k}\pose{\frac{b_{k}\Psi}{2\pi i\pose{\frac{2^{2j}}{\lambda}}\llvert b\rrvert^2}}.
\end{equation*}

Now, we shall estimate $\pose{\mathcal{L}^{t}}^{N}\left[\Psi\right]$. First, we utilize \eqref{3.10} to compute the derivatives for ${b_k=\pose{\nabla_{x,t}\pose{\phi\pose{x,\xi,t}-\phi\pose{x,\eta,t}}}_{k}}$ in \eqref{meanvalue}: 
\[\llvert\partial_{x,t}^\beta \pose{b_k}\rrvert\lesssim_{\beta}\frac{\lambda}{2^j}\llvert\xi-\eta\rrvert\leq\llvert J\pose{\xi-\eta}\rrvert+\frac{\lambda}{2^j}\llvert\xi-\eta\rrvert\sim\llvert b\rrvert\]
 for all $k=1,\cdots,d+1$ and for all multi-indices $\beta$ with $\llvert\beta\rrvert\geq1$. Next, by combining this with \eqref{3.10}, we obtain that
\[\llvert\partial_{x,t}^{\beta}\pose{\frac{b}{\llvert b\rrvert^2}}\rrvert\lesssim_{\beta}\frac{1}{\llvert b\rrvert}\sim\frac{1}{\llvert J\pose{\xi-\eta}\rrvert+\frac{\lambda}{2^j}\llvert\xi-\eta\rrvert}\ \ \text{for all}\ \beta.\]
Also, from the support conditions $\llvert\xi+x\rrvert\sim\llvert\eta+x\rrvert\sim t\sim1$, one can see that
\begin{align*}
\llvert\partial_{x,t}^{\beta}\pose{\widetilde{\chi}_j\pose{2^j t\llvert\xi+x\rrvert}}\rrvert=\llvert\int_{\R^d}\partial_{x,t}^{\beta}\pose{\chi\pose{t\pose{\xi+x}-\frac{\eta}{2^j}}}\widehat{\psi}\pose{\eta}d\eta\rrvert\leq C .
\end{align*}
Moreover, from $\frac{t\lambda}{2^{j+\ell}}\lesssim \frac{1}{t}\sim 1$ by the support condition of $\varphi_{j,\ell}\pose{t^2\lambda}$, it holds that
\[\llvert\partial_{t}^{\beta}\pose{\varphi_{j,\ell}\pose{t^2\lambda}}\rrvert \leq 2\pose{\frac{t\lambda}{2^{j+\ell}}+\frac{\lambda}{2^{j+\ell}}}\sim \frac{1}{t}+\frac{1}{t^2}\sim 1.\]
Finally, by utilizing the above inequalities and the support condition $t\sim1$, we deduce that 
\[\llvert\partial_{x,t}^{\beta}\Psi\pose{x,\xi,\eta,t}\rrvert\lesssim_{d,\beta}\tilde{\chi}\pose{t}\tilde{\chi}\pose{t\llvert\xi+x\rrvert}\tilde{\chi}\pose{t\llvert\eta+x\rrvert}\varphi_{j,\ell}\pose{\lambda}.\] Therefore, we have
\begin{equation*}\label{integrationbypartamp}
\llvert\pose{\mathcal{L}^t}^N\left[\Psi\right]\pose{x,\xi,\eta,t}\rrvert\lesssim_{d,N}\frac{\tilde{\chi}\pose{t}\tilde{\chi}\pose{t\llvert\xi+x\rrvert}\tilde{\chi}\pose{t\llvert\eta+x\rrvert}\varphi_{j,\ell}\pose{\lambda}}{\pose{1+\pose{\frac{2^{2j}}{\lambda}}\pose{\llvert J\pose{\xi-\eta}\rrvert+\frac{\lambda}{2^j}\llvert\xi-\eta\rrvert}}^{N}}
\end{equation*}
for all $N\in\N$. Putting this into \eqref{integrationbypart} with $\varphi_{j,\ell}\pose{\lambda}=\chi\pose{\frac{\lambda}{2^{j+\ell}}}$ for $\ell>0$, we obtain 
\begin{align}\label{3.11}
\llvert K_{\ell}\pose{\xi,\eta}\rrvert&=\pose{\frac{2^{2j}}{\lambda}}^{d}\int_{0}^{\infty}\int_{\R^d} \llvert\pose{\mathcal{L}^t}^{d+1}\left[\Psi\right]\pose{x,\xi,\eta,t}\rrvert dxdt\\
&\lesssim\pose{\frac{2^{2j}}{\lambda}}^{d}\frac{\chi\pose{\frac{\lambda}{2^{j+\ell}}}}{\pose{1+\pose{\frac{2^{2j}}{\lambda}}\pose{\llvert J\pose{\xi-\eta}\rrvert+\frac{\lambda}{2^j}\llvert\xi-\eta\rrvert}}^{d+1}}\nonumber.
\end{align}
Then, by discarding $\llvert J\pose{\xi-\eta}\rrvert$, we see that
\begin{align*}
\int_{\R^d}\llvert K_{\ell}\pose{\xi,\eta}\rrvert d\xi&\lesssim\pose{\frac{2^{2j}}{\lambda}}^{d}\int_{\R^d} \frac{\chi\pose{\frac{\lambda}{2^{j+\ell}}}}{\pose{1+2^j\llvert\xi-\eta\rrvert}^{d+1}} d\xi \nonumber\\
&\lesssim \pose{\frac{2^j}{\lambda}}^d\chi\pose{\frac{\lambda}{2^{j+\ell}}}\lesssim 2^{-d\ell}.\nonumber
\end{align*}
Similarly, we obtain $\int_{\R^d}\llvert K_{\ell}\pose{\xi,\eta}\rrvert d\eta\lesssim2^{-d\ell}$. By Schur's test, the inequality \eqref{singlel>0} holds.
\end{proof}
\begin{proof}[\textbf{Proof of \eqref{singlezero}}]
On the other hand, by noting that $\llvert J\pose{\xi-\eta}\rrvert=\llvert\xi-\eta\rrvert$ and that $\varphi_{j,0}\pose{\lambda}=\psi\pose{\frac{\lambda}{2^{j}}}\leq1$, we directly obtain from \eqref{3.11} that
\begin{align*}
\int_{\R^d}\llvert K_{\ell}\pose{\xi,\eta}\rrvert d\xi&\lesssim \pose{\frac{2^{2j}}{\lambda}}^{d}\int_{\R^d} \frac{1}{\pose{1+\frac{2^{2j}}{\lambda}\llvert \xi-\eta\rrvert}^{d+1}} d\xi\lesssim 1.\nonumber
\end{align*}
Similarly, we have $\int_{\R^d}\llvert K_{\ell}\pose{\xi,\eta}\rrvert d\eta\lesssim1$. Again, by Schur's test, \eqref{singlezero} holds.
\end{proof}

\subsection{Proof of \eqref{l=0sec5}}
Now, there remains \eqref{l=0sec5}. We first state an almost orthogonality lemma.
\begin{lemma}[Cotlar-Stein]\label{lem3.1}Let $M_1, M_2$ be two measure spaces and let $\left\{T_k\right\}_{k\in\Z}$ be a family of the operators $T_k : L^2\pose{M_1}\mapsto L^2\pose{M_2}.$ If $\llVert T_{k_1}T^*_{k_2}\rrVert_{op}\leq C\pose{\llvert k_1-k_2\rrvert}$ with $\sum_{k\in\Z}\sqrt{C\pose{\llvert k\rrvert}}$ being finite, then for all $f\in L^2\pose{M_1}$,
\[\llVert\pose{\sum_{k\in\Z}\llvert T_k f\rrvert^2}^{1/2}\rrVert_{L^2\pose{M_2}}\lesssim\pose{\sup_{k}\llVert T_k\rrVert_{op}\sum_{k\in\Z}\sqrt{C\pose{\llvert k\rrvert}}}^{1/2}\llVert f\rrVert_{L^2\pose{M_2}}.\]
\end{lemma}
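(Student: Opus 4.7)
The plan is to reformulate the square-function estimate as an operator-norm bound on a vector-valued operator, and then reduce to a Schur-test argument on the associated block matrix. Define
\[
S: L^2(M_1) \to \ell^2_k\bigl(L^2(M_2)\bigr), \qquad Sf = (T_k f)_{k \in \Z}.
\]
By Tonelli,
\[
\Bigl\|\Bigl(\sum_{k\in\Z} |T_k f|^2\Bigr)^{1/2}\Bigr\|_{L^2(M_2)}^2 = \sum_{k\in\Z} \|T_k f\|_{L^2(M_2)}^2 = \|Sf\|_{\ell^2(L^2(M_2))}^2,
\]
so the desired inequality is equivalent to $\|S\|_{\mathrm{op}}^2 \lesssim A\cdot B$, where I abbreviate $A := \sup_{k}\|T_k\|_{\mathrm{op}}$ and $B := \sum_{k\in\Z}\sqrt{C(|k|)}$.

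Next I would use the identity $\|S\|_{\mathrm{op}}^2 = \|SS^*\|_{\mathrm{op}}$, where $SS^*$ acts on $\ell^2(L^2(M_2))$ as the block operator matrix with $(k,k')$-entry $T_k T_{k'}^*$. Applying the Schur test for block operator matrices (a direct Cauchy--Schwarz computation, and the standard device behind the Cotlar--Stein lemma) one gets
\[
\|SS^*\|_{\mathrm{op}} \leq \Bigl(\sup_{k}\sum_{k'} \|T_k T_{k'}^*\|_{\mathrm{op}}\Bigr)^{1/2}\Bigl(\sup_{k'}\sum_{k} \|T_k T_{k'}^*\|_{\mathrm{op}}\Bigr)^{1/2}.
\]
The hypothesis is translation invariant in the $k$-index, so both row and column sums are bounded by $\sum_{m\in\Z} \|T_k T_{k+m}^*\|_{\mathrm{op}}$, and the main task reduces to controlling this sum in the sharp form demanded by the conclusion.

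Here is the key trick: combine the hypothesis $\|T_k T_{k'}^*\|_{\mathrm{op}} \leq C(|k-k'|)$ with the trivial bound $\|T_k T_{k'}^*\|_{\mathrm{op}} \leq \|T_k\|_{\mathrm{op}}\|T_{k'}\|_{\mathrm{op}} \leq A^2$, and interpolate by the elementary inequality $\min(a,b)\leq \sqrt{ab}$ to obtain
\[
\|T_k T_{k'}^*\|_{\mathrm{op}} \leq A\sqrt{C(|k-k'|)}.
\]
Feeding this back into Schur's test, each row (and each column) sum becomes $A\cdot B$, hence $\|SS^*\|_{\mathrm{op}} \leq AB$. Taking square roots gives $\|S\|_{\mathrm{op}} \leq (AB)^{1/2}$, which is exactly the stated estimate.

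The routine part is the block-operator Schur test; the genuinely important step is the geometric-mean interpolation, since without it one would only obtain the cruder $\|S\|_{\mathrm{op}}^2 \leq \sum_m C(|m|)$, which is too weak for the square-function bound to be useful in the $L^2$ estimate of Section~5. I expect this interpolation step to be the only point requiring care in writing out the details.
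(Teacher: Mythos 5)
The paper's own ``proof'' is only a citation to \cite[Lemma~3.2]{seeger}, so there is no in-text argument to compare against; I will therefore just assess your argument on its own. Your proof is correct, and it is the standard route for the square-function (vector-valued) form of Cotlar--Stein: identify $\bigl(\sum_k\lvert T_kf\rvert^2\bigr)^{1/2}$ with $\lVert Sf\rVert_{\ell^2(L^2)}$, pass to $SS^*$ whose block entries are $T_kT_{k'}^*$, and apply the block Schur test. The geometric-mean step $\lVert T_kT_{k'}^*\rVert\le\min\bigl(C(\lvert k-k'\rvert),A^2\bigr)\le A\sqrt{C(\lvert k-k'\rvert)}$ (using $\min(a,b)\le\sqrt{ab}$) is indeed the crucial ingredient that produces the factor $\bigl(A\sum_m\sqrt{C(\lvert m\rvert)}\bigr)^{1/2}$ rather than the cruder $\bigl(\sum_mC(\lvert m\rvert)\bigr)^{1/2}$. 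The one small point you should tidy when writing this out is that you invoke $\lVert S\rVert^2=\lVert SS^*\rVert$ before knowing $S$ is bounded; this is easily repaired by first restricting to a finite index set $F\subset\Z$ and computing
\[
\sum_{k\in F}\lVert T_kf\rVert^2=\sup_{\lVert(g_k)\rVert_{\ell^2(L^2)}\le1}\Bigl\lvert\Bigl\langle f,\sum_{k\in F}T_k^*g_k\Bigr\rangle\Bigr\rvert^2
\le\lVert f\rVert^2\sup_{\lVert g\rVert\le1}\sum_{k,k'\in F}\lVert T_{k'}T_k^*\rVert\,\lVert g_k\rVert\,\lVert g_{k'}\rVert\le AB\,\lVert f\rVert^2,
\]
uniformly in $F$, and then letting $F\uparrow\Z$. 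With that adjustment the argument is complete.
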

\begin{proof}
See \cite[Lemma~3.2]{seeger}.
\end{proof}
Using Lemma \ref{lem3.1} and \eqref{singlezero} in Proposition \ref{prop3.1}, one can obtain \eqref{l=0sec5} by the next proposition.
\begin{proposition}\label{prop3.2} For $\pose{j,k_1,k_2}\in\Z_{+}\times\Z\times\Z$, it holds that
\begin{equation}\label{3.15}
\llVert T_{j,0,k_{1}}^{\lambda}\pose{T_{j,0,k_{2}}^{\lambda}}^{*}\rrVert_{op}\leq C2^{-\llvert k_{1}-k_{2}\rrvert}
\end{equation}
where $C>0$ is independent of $j$ and $\lambda$.
\end{proposition}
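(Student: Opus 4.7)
The plan is to combine the dilation identity of Lemma~\ref{dilationop} with non-stationary phase in $\xi$ and Schur's test. By the symmetry $\llVert T T^{*}\rrVert_{op}=\llVert (TT^{*})^{*}\rrVert_{op}$, I may assume $k_{1}\ge k_{2}$ and set $n=k_{1}-k_{2}\ge 0$. Writing $T_{j,0,k}^{\lambda}=E_{k}T_{j,0,0}^{\lambda 2^{-2k}}D_{-k}$ with $E_{k}$ and $D_{k}$ the natural unitary dilations on $L^{2}(\R^{d}\times\R_{+})$ and $L^{2}(\R^{d})$, and invoking the identity $T_{j,0,0}^{\mu_{1}}D_{-n}=E_{-n}T_{j,0,n}^{\mu_{1}2^{2n}}$ (which holds because $\mu_{1}2^{2n}=\lambda 2^{-2k_{2}}$ with $\mu_1:=\lambda 2^{-2k_1}$), a short unitary conjugation gives
\begin{equation*}
\llVert T_{j,0,k_{1}}^{\lambda}(T_{j,0,k_{2}}^{\lambda})^{*}\rrVert_{op}=\llVert T_{j,0,n}^{\mu}(T_{j,0,0}^{\mu})^{*}\rrVert_{op},\qquad \mu:=\lambda 2^{-2k_{2}}.
\end{equation*}
It therefore suffices to show $\llVert T_{j,0,n}^{\mu}(T_{j,0,0}^{\mu})^{*}\rrVert_{op}\lesssim 2^{-n}$ uniformly in $\mu>0$, $j\ge 2$, $n\ge 0$. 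For $n$ bounded by a large absolute constant $N_{0}$, this is immediate from $\llVert T_{j,0,n}^{\mu}\rrVert_{op}\,\llVert T_{j,0,0}^{\mu}\rrVert_{op}\lesssim 1$ by \eqref{singlezero}, so I focus on $n\ge N_{0}$.

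For such $n$, the kernel of $S:=T_{j,0,n}^{\mu}(T_{j,0,0}^{\mu})^{*}$ is
\begin{equation*}
K(x_{1},t_{1},x_{2},t_{2})=\int_{\R^{d}}A_{n}(x_{1},\xi,t_{1})\,\overline{A_{0}(x_{2},\xi,t_{2})}\,e^{2\pi i[\phi(x_{1},\xi,t_{1})-\phi(x_{2},\xi,t_{2})]}\,d\xi,
\end{equation*}
where $\phi(x,\xi,t)=x\cdot\xi+t|\xi+\mu Jx|$ and the amplitudes are supported where $t_{1}\sim 2^{-n}$, $t_{2}\sim 1$, $|\xi+\mu Jx_{1}|\sim 2^{j+n}$, $|\xi+\mu Jx_{2}|\sim 2^{j}$, and (from $t_{2}^{2}\mu\lesssim 2^{j}$) $\mu\lesssim 2^{j}$. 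These constraints force $|\mu J(x_{1}-x_{2})|=|(\xi+\mu Jx_{1})-(\xi+\mu Jx_{2})|\sim 2^{j+n}$, hence $|x_{1}-x_{2}|\sim 2^{j+n}/\mu\ge 2^{n}$. For $n\ge N_{0}$ this separation dominates $t_{2}\sim 1$ in
\begin{equation*}
\nabla_{\xi}[\phi(x_{1},\xi,t_{1})-\phi(x_{2},\xi,t_{2})]=(x_{1}-x_{2})+t_{1}\tfrac{\xi+\mu Jx_{1}}{|\xi+\mu Jx_{1}|}-t_{2}\tfrac{\xi+\mu Jx_{2}}{|\xi+\mu Jx_{2}|},
\end{equation*}
so, writing $\Phi$ for the phase difference, $|\nabla_{\xi}\Phi|\sim|x_{1}-x_{2}|$ on the support. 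A direct calculation also yields $|\partial_{\xi}^{\beta}(A_{n}\overline{A_{0}})|\lesssim 2^{n/2}\,2^{-j|\beta|}$: the finer scale $2^{j}$ (from $A_{0}$) governs the amplitude smoothness.

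Integrating by parts in $\xi$ $N$ times then yields the pointwise bound
\begin{equation*}
|K(x_{1},t_{1},x_{2},t_{2})|\lesssim 2^{n/2}\cdot 2^{jd}\cdot(|x_{1}-x_{2}|\cdot 2^{j})^{-N},
\end{equation*}
where $2^{jd}$ is the volume of the $\xi$-support (contained in the finer annulus). For Schur's test I fix $(x_{1},t_{1})$: the $t_{2}$-integration contributes $O(1)$, and $x_{2}$ is restricted to the shell $\{|x_{1}-x_{2}|\sim 2^{j+n}/\mu\}$ of volume $\lesssim(2^{j+n}/\mu)^{d}$. Substituting $|x_{1}-x_{2}|\sim 2^{j+n}/\mu$ into the pointwise bound and using $\mu\lesssim 2^{j}$ to control the resulting $\mu$-powers via $\mu^{N-d}\le 2^{j(N-d)}$, the choice $N=d+2$ gives $\sup_{x_{1},t_{1}}\int|K|\,dx_{2}dt_{2}\lesssim 2^{-3n/2-2j}\lesssim 2^{-n}$. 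The dual estimate $\sup_{x_{2},t_{2}}\int|K|\,dx_{1}dt_{1}\lesssim 2^{-n}$ follows from the same computation (the extra $t_{1}\sim 2^{-n}$ support length only strengthens it), so Schur's test yields \eqref{3.15}.

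The main difficulty I expect is the calibration of the two frequency scales $2^{j+n}$ and $2^{j}$ carried by $A_{n}$ and $A_{0}$: the amplitude smoothness is controlled by the \emph{finer} scale $2^{j}$ while the phase-gradient lower bound $\sim 2^{j+n}/\mu$ comes from the \emph{coarser} one, and the critical constraint $\mu\lesssim 2^{j}$ coming from the support of $\varphi_{j,0}(t_{2}^{2}\mu)=\psi(t_{2}^{2}\mu/2^{j})$ at $t_{2}\sim 1$ is exactly what forces the $\mu$-powers to cancel and produces the clean $2^{-n}$ decay.
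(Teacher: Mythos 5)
Your proof is correct and follows essentially the same route as the paper: compute the kernel of $T_{j,0,k_1}^\lambda\pose{T_{j,0,k_2}^\lambda}^*$, use the support conditions and $\mu\lesssim 2^j$ to get the separation $|x_1-x_2|\sim 2^{j+n}/\mu$ dominating the $t$-terms in $\nabla_\xi\Phi$, integrate by parts $d+2$ times against the amplitude derivative scale $2^{-j}$, and close with Schur's test. The only difference is cosmetic: you invoke Lemma~\ref{dilationop} up front to normalize $k_2=0$, whereas the paper keeps both $k_1,k_2$ and introduces the auxiliary scales $2^{\ell_i}=\lambda2^{-j-2k_i}$ (and also pre-substitutes $(\xi,x)\mapsto(\lambda\xi,J^{-1}x)$), but the resulting exponent bookkeeping is the same.
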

\begin{proof} Note that $J$ is invertible. By applying the substitution $\pose{\xi,x}\mapsto\pose{\lambda\xi,J^{-1}x}$ to $T_{j,0,k}^{\lambda}$ in \eqref{Tjkell}, it suffices to consider a new operator $T_{j,0,k}^{\lambda}$ defined by
\begin{align*}
T_{j,0,k}^{\lambda}\pose{f}\pose{x,t}&=\lambda^{\frac{d}{2}}2^{\frac{k}{2}}\int_{\R^d} e^{2\pi i \lambda\left[J^{-1}x\cdot\xi+t\llvert\xi+x\rrvert\right]} \\
&\times\chi\pose{2^{k}t}\chi\pose{\frac{t\lambda\llvert\xi+x\rrvert}{2^j}}\widetilde{\chi}_j\pose{t\lambda\pose{\xi+x}}\psi\pose{\frac{t^2\lambda}{2^{j}}}\widehat{f}\pose{\xi}d\xi.
\end{align*}
For estimating $\llVert T_{j,0,k_{1}}^{\lambda}\pose{T_{j,0,k_{2}}^{\lambda}}^{*}\rrVert_{op}$, one can compute the kernel $K_{1,2}$ of $T_{j,0,k_1}^{\lambda}\pose{T_{j,0,k_2}^{\lambda}}^{*}$ as
\begin{align*}
K_{1,2}\pose{x,y,t,s}&=\lambda^{d}2^{\pose{\frac{k_1+k_2}{2}}}\int_{\R^d}e^{2\pi i \lambda\Phi\pose{x,y,\xi,t,s}}\Psi\pose{x,y,\xi,t,s}d\xi
\end{align*}
where  the phase function $\phi\pose{x,\xi,t}$ of $T_{j,0,k}^{\lambda}\pose{f}\pose{x,t}$ is defined by 
\[\phi\pose{x,\xi,t}=J^{-1}x\cdot\xi+t\llvert\xi+x\rrvert,\] 
and hence the phase function $\Phi\pose{x,y,\xi,t,s}$ of $K_{1,2}\pose{x,y,t,s}$ is defined by
\begin{equation*}
\Phi\pose{x,y,\xi,t,s}=\phi\pose{x,\xi,t}-\phi\pose{y,\xi,t}=J^{-1}\pose{x-y}\cdot\xi+t\llvert\xi+x\rrvert-s\llvert\xi+y\rrvert,
\end{equation*}
and the amplitude $\Psi\pose{x,y,\xi,t,s}$ is given by
\begin{align*}
\Psi\pose{x,y,\xi,t,s}&=\chi\pose{2^{k_{1}}t}\chi\pose{\frac{t\lambda\llvert\xi+x\rrvert}{2^{j}}}\widetilde{\chi}_j\pose{t\lambda\pose{\xi+x}}\psi\pose{\frac{t^2\lambda}{2^{j}}}\\&\times\chi\pose{2^{k_{2}}s}\chi\pose{\frac{s\lambda\llvert\xi+y\rrvert}{2^{j}}}\widetilde{\chi}_j\pose{s\lambda\pose{\xi+y}}\psi\pose{\frac{s^2\lambda}{2^{j}}}.
\end{align*}

We shall prove \eqref{3.15} for a fixed $\lambda$. From the support condition of $\Psi$, one can see that 
\begin{equation}
t\sim2^{-k_1},\ s\sim2^{-k_2}, \ \text{and}\ \lambda\leq 4\min\{2^{j+2k_1}, \ 2^{j+2k_2}\}.\label{squaresupport1}
\end{equation}
For this fixed $\lambda$, we temporarily set $\ell_1, \ell_2$ to be integers such that
\begin{align}\label{squaresupport2}
2^{\ell_1}=\lambda2^{-j-2k_1}\ \text{and}\ 2^{\ell_2}=\lambda2^{-j-2k_2}.
\end{align}
Then, together with the support condition of $\Psi$ and \eqref{squaresupport1}, we deduce that
\begin{equation}
\llvert\xi+x\rrvert\sim2^{-k_1-\ell_1}\ \text{and}\ \llvert\xi+y\rrvert\sim2^{-k_2-\ell_2}.\label{supportstt*}
\end{equation}
Without loss of generality, assume that $k_1 - k_2 > 10$. Then, by the relation 
\[
2\pose{k_{1}-k_{2}}=\ell_{2}-\ell_{1}
\]
(which is derived from \eqref{squaresupport2}), it follows that $\ell_{2} > \ell_{1} + 10$. Moreover, combining this with \eqref{supportstt*} and the fact that $\ell_1, \ell_2 \le 2$ (as implied by \eqref{squaresupport1} and \eqref{squaresupport2}), we obtain
\begin{equation}
t\sim2^{-k_1}\ll s\sim2^{-k_2}\lesssim \llvert\xi+y\rrvert\sim2^{-k_2-\ell_2}\ll \llvert\xi+x\rrvert\sim2^{-k_1-\ell_1}. \label{comparison}
\end{equation} 
Here, we write $A\ll B$ to mean that $2^{4}A\leq B$.

Note that 
\begin{equation}\label{maingradtt}
\nabla_{\xi}\Phi\pose{x,y,\xi,t,s}=J^{-1}\pose{x-y}+t\frac{\xi+x}{\llvert\xi+x\rrvert}-s\frac{\xi+y}{\llvert\xi+y\rrvert}.
\end{equation}
Also, it holds that $\llvert\pose{x-y}\rrvert=\llvert\pose{\xi+x}-\pose{\xi+y}\rrvert\sim2^{-k_1-\ell_1}$  from \eqref{comparison}. Thus, we have
\begin{equation}
\llvert J^{-1}\pose{x-y}\rrvert=\llvert x-y\rrvert\sim2^{-k_1-\ell_1}. \label{domiphase}
\end{equation}
Note that, from \eqref{comparison},
\begin{align*}
\llvert t\frac{\xi+x}{\llvert\xi+x\rrvert}-s\frac{\xi+y}{\llvert\xi+y\rrvert}\rrvert\leq t+s\lesssim 2^{-k_2}\ll2^{-k_1-\ell_1}.
\end{align*}
Together with \eqref{maingradtt} and \eqref{domiphase}, we obtain
\begin{equation}\label{Phase}
\llvert\nabla_{\xi}\Phi\pose{x,y,\xi,t,s}\rrvert \sim \llvert J^{-1}\pose{x-y}\rrvert\sim2^{-k_1-\ell_1}.
\end{equation}

On the other hand, from the support condition of $\psi\pose{\frac{t\lambda}{2^j}}$ and $t\sim2^{-k_1}$, one can see that
\begin{align*}
\llvert\pd_{\xi}^{\beta}\pose{\widetilde{\chi}_j\pose{t\lambda\pose{\xi+x}}}\rrvert=\llvert\int_{\R^d}\pd_{\xi}^{\beta}\pose{\chi\pose{\frac{t\lambda\pose{\xi+x}-\eta}{2^j}}}\widehat{\psi}\pose{\eta}d\eta\rrvert\leq C \pose{\frac{t\lambda}{2^j}}^{\llvert\beta\rrvert}
\end{align*}
and similarly $\llvert\pd_{\xi}^{\beta}\pose{\widetilde{\chi}_j\pose{s\lambda\pose{\xi+y}}}\rrvert\lesssim\pose{\frac{s\lambda}{2^j}}^{\llvert\beta\rrvert}$ for all multi-indices $\beta$. Then, we obtain 
\begin{align}\label{gradpsi}
\llvert\pd_{\xi}^{\beta}\Psi\pose{x,y,\xi,t,s}\rrvert\lesssim\frac{1}{2^{\pose{-k_2-\ell_2}\llvert\beta\rrvert}}\chi\pose{2^{k_{1}}t}\chi\pose{2^{k_{2}}s}\chi\pose{\frac{t\lambda\llvert\xi+x\rrvert}{2^{j}}}\chi\pose{\frac{s\lambda\llvert\xi+y\rrvert}{2^{j}}}
\end{align} 
from $\frac{t\lambda}{2^j}\sim2^{k_1+\ell_1}\ll2^{k_2+\ell_2}\sim\frac{s\lambda}{2^j}$ in \eqref{squaresupport2} and \eqref{comparison} combined with $\psi\pose{\frac{t^2\lambda}{2^{j}}}\leq1$ and slightly modifications of $\chi$.

Take $b=\nabla_{\xi}\Phi\pose{x,y,\xi,t,s}$ and define an operator $\mathcal{L}$ by $\mathcal{L}\left[f\right]=\frac{1}{2\pi i\lambda}\dotprod{\frac{b}{\llvert b\rrvert^2}}{\nabla_{\xi}}\pose{f}$. Then it holds that $\mathcal{L}\left[e^{2\pi i\lambda\Phi}\right]=e^{2\pi i\lambda\Phi}$. By applying the integration by parts $d+2$ times with direction $\frac{b}{\llvert b\rrvert^2}$, we obtain that 
\begin{align}\label{intparttt}
K_{1,2}\pose{x,y,t,s}&=\lambda^{d}2^{\pose{\frac{k_1+k_2}{2}}}\int_{\R^d}\pose{\mathcal{L}}^{d+2}\left[e^{2\pi i \lambda\Phi\pose{x,y,\xi,t,s}}\right]\Psi\pose{x,y,\xi,t,s}d\xi\\
&=\lambda^{d}2^{\pose{\frac{k_1+k_2}{2}}}\int_{\R^d}e^{2\pi i \lambda\Phi\pose{x,y,\xi,t,s}}\pose{\mathcal{L}^t}^{d+2}\left[\Psi\right]\pose{x,y,\xi,t,s}d\xi\nonumber
\end{align}
where the transpose of $\mathcal{L}$ is defined by
\[\mathcal{L}^{t}\left[\Psi\right]=\sum_{n=1}^{d}\pd_{\xi_n}\pose{\frac{b_{n}\Psi}{2\pi i\lambda\llvert b\rrvert^2}}.\]
From \eqref{comparison} and \eqref{maingradtt}, one can observe that
\begin{equation}\label{gradgradphase}
\llvert\partial_{\xi}^{\beta}b_{n}\rrvert=\llvert\partial_{\xi}^{\beta}\pd_{\xi_n}\Phi\pose{x,y,\xi,t,s}\rrvert\lesssim_{\beta}\frac{2^{-k_2}}{2^{\pose{-k_2-\ell_2}\llvert\beta\rrvert}}
\end{equation}
for all multi-indices $\beta$ with $\llvert\beta\rrvert\geq1$ and for all $n=1,\cdots,d$. Also, from \eqref{comparison} and \eqref{Phase}, it holds that $\frac{2^{-k_2}}{\llvert b\rrvert}\lesssim\frac{2^{-k_2}}{2^{-k_1-\ell_1}}\leq1$. Using this with \eqref{gradgradphase}, one can compute
\begin{equation*}
\llvert\pd_{\xi}^{\beta}\pose{\frac{b}{\llvert b\rrvert^2}}\rrvert\lesssim \frac{1}{\llvert b\rrvert2^{\pose{-k_2-\ell_2}\llvert \beta\rrvert}}\ \text{for all multi-indices}\ \beta.
\end{equation*}
Together with \eqref{gradpsi}, we obtain
\begin{equation*}\label{ttttt}
\llvert\pose{\mathcal{L}^{t}}^{d+2}\left[\Psi\right]\pose{x,y,\xi,t,s}\rrvert\lesssim\frac{\llvert\chi\pose{2^{k_{1}}t}\chi\pose{2^{k_{2}}s}\chi\pose{\frac{t\lambda\llvert\xi+x\rrvert}{2^{j}}}\chi\pose{\frac{s\lambda\llvert\xi+y\rrvert}{2^{j}}}\rrvert}{\pose{\lambda2^{-k_2-\ell_2}\llvert J^{-1}\pose{x-y}\rrvert+1}^{d+2}}.
\end{equation*}
Now, by using this in \eqref{intparttt} and measuring the support size $2^{\pose{-k_2-\ell_2}d}$ of $\chi\pose{\frac{s\lambda\llvert\xi+y\rrvert}{2^{j}}}$, we have
\begin{align}\label{kernelesti}
\llvert K_{1,2}\pose{x,y,t,s}\rrvert&\lesssim \lambda^d 2^{\pose{\frac{k_{1}+k_{2}}{2}}}\frac{2^{\pose{-k_2-\ell_2}d}\chi\pose{2^{k_{1}}t}\chi\pose{2^{k_{2}}s}}{\pose{\lambda2^{-k_2-\ell_2}\llvert J^{-1}\pose{x-y}\rrvert+1}^{d+2}}.
\end{align} 
Among the $d+2$ factors in the denominator on the left-hand side in \eqref{kernelesti}, we use one factor—together with \eqref{domiphase} and the choice $\lambda=2^{j+2k_1+\ell_1}$ from \eqref{squaresupport2}—to obtain a $2^{-\pose{k_1-k_2}}$ decay rate:
\begin{equation*}\label{sigleker}
\llvert\frac{1}{\pose{\lambda2^{-k_2-\ell_2}\llvert J^{-1}\pose{x-y}\rrvert+1}}\rrvert \lesssim \frac{1}{2^{j+2k_1+\ell_1}2^{-k_2-\ell_2}2^{-k_1-\ell_1}}\leq2^{-\pose{k_1-k_2}}.
\end{equation*}
Along with this, we also utilize the other $d+1$ decay factors in \eqref{kernelesti} and apply $\int dt\sim2^{-k_1}$ in the following estimate:
\begin{align*}
\int_{0}^{\infty}\int_{\R^d}\vert K_{1,2}(x,y,t,s)\vert dxdt&\lesssim \lambda^d  2^{\pose{\frac{k_{1}+k_{2}}{2}}}\int_{0}^{\infty}\int_{\R^d}\frac{2^{\pose{-k_2-\ell_2}d}\chi\pose{2^{k_{1}}t}\chi\pose{2^{k_{2}}s}}{\pose{\lambda2^{-k_2-\ell_2}\llvert J^{-1}\pose{x-y}\rrvert+1}^{d+2}}dxdt\nonumber\\
&\lesssim\lambda^d  2^{\pose{\frac{k_{2}-k_1}{2}}}
\int_{\R^d}\frac{2^{\pose{-k_2-\ell_2}d}2^{-\pose{k_1-k_2}}}{\pose{\lambda2^{-k_2-\ell_2}\llvert J^{-1}\pose{x-y}\rrvert+1}^{d+1}}dx\nonumber\\
&\lesssim2^{\pose{\frac{k_2-k_1}{2}}}2^{-\pose{k_1-k_2}}=2^{-\frac{3}{2}\pose{k_1-k_2}}.\label{shurs11}
\end{align*}
Similarly, by applying $\int ds\sim 2^{-k_2}$ instead, we obtain
\begin{align*}
\int_{0}^{\infty}\int_{\R^d}\llvert K_{1,2}\pose{x,y,t,s}\rrvert dyds\lesssim2^{\pose{\frac{k_1-k_2}{2}}}2^{-\pose{k_1-k_2}}=2^{-\frac{1}{2}\pose{k_1-k_2}}.
\end{align*}
Then, by Schur's test, we have $\llVert T_{j,0,k_{1}}^{\lambda}\pose{T_{j,0,k_{2}}^{\lambda}}^{*}\rrVert_{op}\lesssim 2^{-(k_{1}-k_{2})}$.
\end{proof}

\section{Appendix}\label{appendix}

\subsection{Proof of $L^p$-boundedness of $\MM_1$ and $\MM_{j}^{O}$} 
We shall show that
\begin{align}
\llVert\MM_{1}\rrVert_{L^{p}\rightarrow L^{p}}&\lesssim1\ \text{for } 1<p\leq\infty, 
\\ 
\llVert\MM_{j}^{O}\rrVert_{L^{p}\rightarrow L^{p}}&\lesssim2^{-jd}\  \text{for all}\ 1<p\leq\infty\ \text{and all}\ j\geq 2. \label{6.2}
\end{align}
Utilizing the rapidly decreasing property of $\sm_j$ and $\sm_{j}^{O}$ in \eqref{measureofsmj} and \eqref{outdecreasing}, one can see that
 \begin{align*}
 \llvert \sm_{1}\pose{x}\rrvert \leq \frac{C }{\pose{1+\llvert x\rrvert}^{d+1}}\ \text{and} \
 \llvert \sm_{j}^{O}\pose{x} \rrvert \leq \frac{C 2^{-jd}}{\pose{1+\llvert x\rrvert}^{d+1}}.
 \end{align*}
 Then, together with $\MA_{j}\pose{f}\pose{\ux,t}=f\ast_{\bH}\pose{\sm_{j}}_t\pose{\ux}$ in \eqref{averageall}, we deduce that
 \begin{align*}
  \llvert \MA_{j}\pose{f}\pose{x,x_{d+1},t}\rrvert&\leq \frac{C }{t^{d}}\int_{\R^d} \llvert f\pose{x-y,x_{d+1}-\langle Jx,y\rangle}\rrvert \frac{1}{\pose{1+\llvert \frac{y}{t} \rrvert}^{d+1}} dy, \\
 \llvert \MA_{j}^{O}\pose{f}\pose{x,x_{d+1},t}\rrvert&\leq \frac{C 2^{-jd}}{t^{d}}\int_{\R^d} \llvert f\pose{x-y,x_{d+1}-\langle Jx,y\rangle}\rrvert \frac{1}{\pose{1+\llvert \frac{y}{t} \rrvert}^{d+1}} dy.
 \end{align*}
 Thus, both of $\pose{6.1}$ and \eqref{6.2} follow from the below lemma:
\begin{lemma}\label{prop6.1}
Define a maximal operator $M\pose{f}$ by
\begin{equation}
M\pose{f}\pose{x,x_{d+1}}=\sup_{t>0}\frac{1}{t^d}\int_{\R^d} \llvert f\pose{x-y,x_{d+1}-\langle Jx,y\rangle}\rrvert \frac{1}{\pose{1+\llvert \frac{y}{t} \rrvert}^{d+1}} dy.
\end{equation}
Then we have $\llVert M\pose{f}\rrVert_{L^p\pose{\R^{d+1}}}\leq C \llVert f\rrVert_{L^p\pose{\R^{d+1}}}$ for all $1<p\leq\infty$.
\end{lemma}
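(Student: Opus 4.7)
The case $p=\infty$ is immediate: the substitution $z = y/t$ gives $\int_{\R^d}(1+|y/t|)^{-(d+1)}\,dy = C_d\,t^d$, so $\|M(f)\|_\infty\lesssim\|f\|_\infty$. For $1<p<\infty$, the plan is to first dyadically decompose the radial weight,
\[
\frac{1}{t^d(1+|y/t|)^{d+1}}\lesssim \sum_{k=0}^{\infty}\frac{2^{-k}}{|B(0,2^kt)|}\mathbf{1}_{|y|\leq 2^kt}(y),
\]
and sum the geometric series to reduce the $L^p$-boundedness of $M$ to that of the ball-averaged variant
\[
\tilde M(f)(x,x_{d+1})=\sup_{r>0}\frac{1}{|B(0,r)|}\int_{|y|\leq r}|f|(x-y,x_{d+1}-\langle Jx,y\rangle)\,dy.
\]
Setting $\bar y_0=(y,0)\in\bH^n$, one checks that the integrand equals $|f|(\bar y_0^{-1}\cdot\bar x)$ under the Heisenberg group law, so $\tilde M$ is a left-invariant maximal operator; in particular, for each fixed $r$ the operator $f\mapsto |f|*_{\bH}\mu_r$ (with $\mu_r$ the uniform probability measure on the horizontal slice $\{(y,0):|y|\leq r\}$) has $L^p$-norm bounded by one, by Minkowski's inequality and the fact that left translation in $\bH^n$ is an $L^p$-isometry.

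To control the supremum over $r$, I would invoke the Hardy-Littlewood-type maximal function $\mathfrak M$ associated with the doubling quasi-balls $B(\bar c,r)$ of (3.1), specialized to $j=\ell=0$ so that $B(\bar c,r)=\{\bar y:|c-y|<r,\,|c_{d+1}-y_{d+1}+\langle Jc,y\rangle|<r^2\}$. The proofs of Lemmas 3.2 and 3.3 apply verbatim to this case, giving the doubling property for $B(\bar c,r)$ and a weak-type $(1,1)$ estimate for $\mathfrak M$; combined with the trivial $L^\infty$ bound, Marcinkiewicz interpolation then yields $L^p$-boundedness of $\mathfrak M$ for every $1<p\leq\infty$.

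The main technical obstacle will be to establish a pointwise bound of the form $\tilde M(f)(\bar x)\lesssim\mathfrak M(\mathfrak M_{x_{d+1}}|f|)(\bar x)$, where $\mathfrak M_{x_{d+1}}$ is the one-dimensional Hardy-Littlewood maximal function in the last variable. The geometric obstruction is a dimension mismatch: the $d$-dimensional horizontal slice over which $\tilde M(f)$ averages is a null set in $\R^{d+1}$, whereas $B(\bar x,r)$ has vertical thickness $\sim r^2$. I would bridge this gap by first pointwise a.e.\ dominating $|f|$ by its vertical maximal majorant $\mathfrak M_{x_{d+1}}|f|$ (which effectively ``thickens'' the sheet), and then matching the vertical shift $|\langle Jx,y\rangle|\leq|x|\,r$ for $|y|\leq r$ with the scale $r^2$ of $B(\bar x,r)$ scale-by-scale. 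Since both $\mathfrak M$ and $\mathfrak M_{x_{d+1}}$ are bounded on $L^p$ for $1<p\leq\infty$, their composition then yields the required estimate; if the pointwise majorization proves delicate at borderline scales, a fallback is a Sobolev-embedding-in-$r$ argument that exploits the uniform-in-$r$ bound $\|\tilde M_rf\|_p\le\|f\|_p$ together with an analogous bound for $r\,\partial_r\tilde M_rf$.
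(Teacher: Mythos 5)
Your handling of $p=\infty$ and the reduction to the ball-averaged operator $\tilde M$ are fine, and you correctly isolate the essential difficulty: $\tilde M$ averages $|f|$ over a $d$-dimensional twisted hyperplane, a Lebesgue-null subset of $\R^{d+1}$, so no direct pointwise domination by the Carnot--Carath\'eodory Hardy--Littlewood operator $\mathfrak M$ is available. But the proposal stops exactly there: the inequality $\tilde M(f)\lesssim\mathfrak M(\mathfrak M_{x_{d+1}}|f|)$ that would finish the argument is stated only as a plan, and the sketch does not establish it. Replacing $|f|$ a.e.\ by $\mathfrak M_{x_{d+1}}|f|$ is legitimate (justified by Fubini over the family of slices), but that leaves you with $\tilde M(g)$ for a general nonnegative $g$, and nothing in the sketch shows that the average of $g$ over the null slice through $\bar x$ is dominated by the average of $g$ over the thickened ball $B(\bar x,r)$ whose vertical width is $r^2$. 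That $g$ is itself a vertical maximal function does help, but only through a nontrivial argument (one must in effect choose the Carnot radius $\rho$ adaptively, commensurate with the vertical scale at which $\mathfrak M_{x_{d+1}}|f|$ is nearly attained along the slice), and the ``matching the vertical shift scale-by-scale'' remark does not supply it. The Sobolev-in-$r$ fallback is also not viable: each dyadic block $r\sim 2^k$ of $\tilde M_r$ has $L^p\to L^p$ operator norm $\approx 1$ with no decay in $k$, so the dyadic sum cannot be closed.

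The paper itself avoids all of this by simply invoking the maximal theorem for homogeneous nilpotent groups (Folland--Stein, Cor.~2.5), which is designed for exactly this class of averaging operators. Your route --- dominating $\tilde M$ by a composition of the Carnot Hardy--Littlewood operator with a one-dimensional vertical maximal function --- is genuinely different from the paper's and is a natural idea, but as written its central pointwise estimate is a conjecture rather than a proof, and that estimate is the whole ballgame.
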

\begin{proof}
See \cite[Corollary~2.5]{folland1982hardy}.
\end{proof}
 
\subsection{Proof of $L^2$ Estimates \eqref{l2tail} for the Tail Terms}
In this subsection, we prove \eqref{l2tail}:
\[\llVert \MV_{j,\ell}\rrVert_{L^2\pose{\R^{d+1}}\rightarrow L^2\pose{\R^{d+1}}}\leq C  2^{-j\pose{\frac{d-2}{2}}}2^{-\ell\pose{\frac{d}{2}}}.\]
Recall the form of the tail maximal function $\MV_{j,\ell}$ in \eqref{insideandoutside} and \eqref{outside}:
\begin{align*}
\widehat{d\nu_j}\pose{\xi}&=\chi_0^c\pose{\frac{\xi}{2^j}}\widehat{\sm_{j}^{I}}\pose{\xi}=\chi_0^c\pose{\frac{\xi}{2^j}}\widetilde{\chi}_j\pose{\xi}\widehat{\sm}\pose{\xi},
\\
\MS_{j,\ell}^c\pose{f}\pose{\ux,t}&=\int_{\R^{d+1}} e^{2\pi i\ux\cdot\uxi}\widehat{d\nu_{j}}\pose{t\pose{\xi+\xi_{d+1}Jx}}\varphi_{j,\ell}\pose{t^2\xi_{d+1}}\widehat{f}\pose{\xi,\xi_{d+1}}d\xi d\xi_{d+1}, \\
\MV_{j,\ell}\pose{f}\pose{\ux}&=\sup_{t>0}\llvert \MS_{j,\ell}^c\pose{f}\pose{\ux,t}\rrvert.
\end{align*}
We first prove \eqref{l2tail} in the case of $\ell=0$. 
\begin{proof}[\textbf{Proof of the case of $\ell=0$}] One can observe that
\begin{align}
\MS_{j,0}^c\pose{f}\pose{\ux,t}=\int_{\R^{d+1}}f\pose{\uy^{-1}\cdot\ux}\label{l=0tail}\frac{1}{t^d}d\nu_j\pose{\frac{y}{t}}\frac{2^j}{t^2}\psi^{\vee}\pose{\frac{2^j}{t^2}y_{d+1}}dydy_{d+1}.
\end{align}
Using the integration by parts, we have
\begin{align*}
\llvert d\nu_j\pose{y}\rrvert\leq\llvert\int_{\R^d}\int_{\R^d}\int_{\R^d}e^{2\pi i \pose{y-x}\cdot\xi}\chi_0^c\pose{\frac{\xi}{2^j}}\chi\pose{\frac{\xi-\eta}{2^j}}\widehat{\psi}\pose{\eta}d\xi d\eta\sm\pose{x}\rrvert\leq\frac{C_N2^{-jN}}{\pose{1+\llvert y\rrvert}^{N}}.
\end{align*}
for any sufficiently large $N>100d$.
Thus, from \eqref{l=0tail}, one can see that
\begin{align*}
\llvert\MS_{j,0}^c\pose{f}\pose{\ux,t}\rrvert&\leq\frac{2^j}{t^{d+2}}\int_{\R^{d+1}}\llvert f\pose{\uy^{-1}\cdot\ux}\rrvert\llvert d\nu_j\pose{\frac{y}{t}}\psi^{\vee}\pose{\frac{2^j}{t^2}y_{d+1}}\rrvert dydy_{d+1} \\
&\lesssim_{N}\frac{2^{-jN}}{t^{d+2}}\int_{\R^{d+1}}\llvert f\pose{\uy^{-1}\cdot\ux}\rrvert \frac{1}{\pose{1+\llvert \frac{y}{t}\rrvert}^{N}}\frac{1}{\pose{1+\llvert\frac{2^j y_{d+1}}{t^2}\rrvert}^2}dydy_{d+1}. 
\end{align*}
The maximal function of the last term in the above inequality can be treated by the similar way in the proof of Lemma \ref{weak11R}. Thus we have  \eqref{l2tail} in the case of $\ell=0$. 
\end{proof}

Now, it remains to proving the case of $\ell\geq1$.
\begin{proof}[\textbf{Proof of the case of $\ell\geq1$}] Similarly to the reduction for $L^2$ estimates of the main terms $ M_{j,\ell}$ in Section \ref{subsectionsobolev}, we begin with the standard Sobolev embedding argument. One can see that
\begin{equation*}
\llvert\MV_{j,\ell}\pose{f}\pose{\ux}\rrvert\leq\pose{\sum_{k\in\Z}\sup_{t>0}\llvert\MS_{j,\ell,k}^c\pose{f}\pose{\ux,t}\rrvert^2}^{\frac{1}{2}}
\end{equation*}
where $\MS_{j,\ell,k}^c\pose{f}$ is defined as
\begin{align}
\MS_{j,\ell,k}^c\pose{f}\pose{\ux,t}&=\chi(2^k t)\MS_{j,\ell}^c\pose{f}\pose{\ux,t}\nonumber\\
&=\chi(2^k t)\int_{\R^{d+1}} e^{2\pi i\ux\cdot\uxi}\widehat{d\nu_{j}}\pose{t\pose{\xi+\xi_{d+1}Jx}}\chi\pose{\frac{t^2\xi_{d+1}}{2^{j+\ell}}}\widehat{f}\pose{\xi,\xi_{d+1}}d\xi d\xi_{d+1}\label{tailsobolevterm}
\end{align}
for $k\in\Z$. Using the standard Sobolev embedding argument, we have
\begin{align}\label{squaresumsobolevsec6}
\int_{\R^{d+1}}\llvert\MV_{j,\ell}\pose{f}\pose{\ux}\rrvert^2 d\ux\leq 2\sum_{k\in\Z}\int_{\R^{d+1}}\int_{0}^{\infty}\llvert \pd_{t}\MS_{j,\ell,k}^c\pose{f}\pose{\ux,t}\rrvert \llvert\MS_{j,\ell,k}^c\pose{f}\pose{\ux,t}\rrvert dtd\ux.
\end{align} 
We treat the $t$ derivative of the amplitude $\chi(2^k t)\widehat{d\nu_{j}}\pose{t\pose{\xi+\xi_{d+1}Jx}}\chi\pose{\frac{t^2\xi_{d+1}}{2^{j+\ell}}}$ in \eqref{tailsobolevterm} by the same way in Subsection \ref{subsectionsobolev} with the only difference $\llvert\pd_{t}\left[\chi_0^c\pose{\frac{t\xi}{2^j}}\right]\rrvert\lesssim\frac{\llvert\xi\rrvert}{2^j}\chi_0^c\pose{\frac{t\xi}{2^j}}$.
Then, with $t\sim2^{-k}$ and $\llvert\frac{t\xi_{d+1}}{2^{j+\ell}}\rrvert\sim 2^{k}$ in \eqref{tailsobolevterm}, one can see that
\begin{align*}
\pd_{t}\pose{\chi(2^k t)\widehat{d\nu_{j}}\pose{t\pose{\xi+\xi_{d+1}Jx}}\chi\pose{\frac{t^2\xi_{d+1}}{2^{j+\ell}}}}\lesssim 2^k \chi(2^k t)&\llvert\widehat{d\nu_{j}}\pose{t\pose{\xi+\xi_{d+1}Jx}}\rrvert\chi\pose{\frac{t^2\xi_{d+1}}{2^{j+\ell}}}\\
+\llvert \xi+\xi_{d+1}Jx\rrvert\chi(2^k t)&\llvert\widehat{d\nu_{j}}\pose{t\pose{\xi+\xi_{d+1}Jx}}\rrvert\chi\pose{\frac{t^2\xi_{d+1}}{2^{j+\ell}}}.
\end{align*}
with slight modifications to $\widehat{\sm}$ and $\chi$. Also, from the support conditions of $\chi_0^c\pose{\frac{t\xi}{2^j}}$ and $\chi\pose{\frac{t\xi-\eta}{2^j}}$, one can see that $\llvert \eta\rrvert\geq\frac{1}{10}\pose{\llvert t\xi\rrvert+\llvert \eta\rrvert}$ and $\llvert \eta\rrvert\geq\frac{2^j}{2}$. Thus we have
\begin{align*}
\llvert \chi_0^c\pose{\frac{t\xi}{2^j}}\pose{\int_{\R^d}\chi\pose{\frac{t\xi-\eta}{2^j}}\widehat{\psi}\pose{\eta}d\eta}\rrvert&\leq \frac{2^{-3jd}C }{\pose{1+\llvert t\xi\rrvert}^{d+2}},
\\
\llvert \widehat{d\nu_j}\pose{t\xi}\rrvert=\llvert\chi_0^c\pose{\frac{t\xi}{2^j}}\pose{\int_{\R^d}\chi\pose{\frac{t\xi-\eta}{2^j}}\widehat{\psi}\pose{\eta}d\eta}\widehat{\sm}\pose{t\llvert \xi\rrvert}\rrvert&\lesssim\frac{2^{-3jd}}{\pose{1+\llvert t\xi\rrvert}^{d+2}} \ \text{and so}
\\
\llvert \xi+\xi_{d+1}Jx\rrvert\chi(2^k t)\llvert\widehat{d\nu_{j}}\pose{t\pose{\xi+\xi_{d+1}Jx}}\rrvert&\lesssim\frac{2^{-3jd}2^k\chi\pose{2^k t}}{\pose{1+t\llvert \xi+\xi_{d+1}Jx\rrvert}^{d+1}}. 
\end{align*}
Thus, with \eqref{tailsobolevterm}, we have the following two pointwise inequality:
\begin{align*}
\llvert\pd_{t} \MS_{j,\ell,k}^c\pose{f}\pose{\ux,t}\rrvert&\lesssim2^{-3jd}2^k\llvert W_{j,\ell,k}\pose{f}\pose{\ux,t}\rrvert\\
\llvert\MS_{j,\ell,k}^c\pose{f}\pose{\ux,t}\rrvert&\lesssim2^{-3jd}\llvert W_{j,\ell,k}\pose{f}\pose{\ux,t}\rrvert
\end{align*}
where $W_{j,\ell,k}\pose{f}$ is defined by
\begin{equation*}
W_{j,\ell,k}\pose{f}\pose{\ux,t}=\int_{\R^d} \frac{\chi\pose{2^k t}}{\pose{1+t\llvert \xi+\xi_{d+1}Jx\rrvert}^{d+1}}\chi\pose{\frac{t^2\xi_{d+1}}{2^{j+\ell}}}\llvert\widehat{f}\pose{\xi,\xi_{d+1}}\rrvert d\xi d\xi_{d+1}.
\end{equation*}
This implies a Sobolev embedding in $t$ such that
\begin{align}\label{tailsobolev}
\sup_{t>0}\llvert\MS_{j,\ell,k}^c\pose{f}\pose{\ux,t}\rrvert^2&\leq 2\int_{0}^{\infty}\llvert\pd_{t}\MS_{j,\ell,k}^c\pose{f}\pose{x,t}\rrvert\llvert \MS_{j,\ell,k}^c\pose{f}\pose{\ux,t}\rrvert dt\\
&\lesssim_d 2^{-6jd}2^k\int_{0}^{\infty}\llvert W_{j,\ell,k}\pose{f}\pose{\ux,t}\rrvert^2 dt.\nonumber
\end{align}
By \eqref{squaresumsobolevsec6} and \eqref{tailsobolev}, we have 
\begin{align*}
\llVert \MV_{j,\ell}\pose{f}\rrVert_{L^{2}\pose{\R^{d+1}}}\lesssim 2^{-3jd}\llVert \pose{\sum_{k\in\Z} \llvert 2^{\frac{k}{2}}W_{j,\ell,k}\pose{f}\rrvert^2}^{1/2} \rrVert_{L^{2}\pose{\R^{d+1}\times \R_{+}}}.
\end{align*}
Now, we shall show that
\begin{align*}
2^{-3jd}\llVert \pose{\sum_{k\in\Z} \llvert 2^{\frac{k}{2}}W_{j,\ell,k}\pose{f}\rrvert^2}^{1/2} \rrVert_{L^{2}\pose{\R^{d+1}\times \R_{+}}}\lesssim 2^{-j\pose{\frac{d-1}{2}}}2^{-\ell\pose{\frac{d}{2}}}\llVert f\rrVert_{L^2 \pose{\R^{d+1}}}.
\end{align*}
Utilizing the same method in the proof of Propositon \ref{reductionuniformL2}, it suffices to show that 
\begin{align}
\sup_{\lambda>0}\llVert \pose{\sum_{k\in\Z} \llvert 2^{\frac{k}{2}}W_{j,\ell,k}^{\lambda}\pose{f}\rrvert^2}^{1/2} \rrVert_{L^{2}\pose{\R^{d}\times \R_{+}}}\lesssim 2^{-\ell\pose{\frac{d}{2}}}\llVert f\rrVert_{L^2 \pose{\R^{d}}}\label{aaaaa2}
\end{align}
where $W_{j,\ell,k}^{\lambda}\pose{f}$ is defined by
\begin{align}
W_{j,\ell,k}^{\lambda}\pose{f}\pose{x,t}&=\int_{\R^d} \frac{\chi\pose{2^k t}}{\pose{1+t\llvert \xi+\lambda Jx\rrvert}^{d+1}}\chi\pose{\frac{t^2\lambda }{2^{j+\ell}}}\llvert\widehat{f}\pose{\xi}\rrvert d\xi.\label{aaaaa5}
\end{align}
Since we consider the $L^2$ operator norm, by utilizing Plancherel's theorem, we consider the following operator instead of \eqref{aaaaa5}:
\[W_{j,\ell,k}^{\lambda}\pose{f}\pose{x,t}=\int_{\R^d} \frac{\chi\pose{2^k t}}{\pose{1+t\llvert \xi+\lambda Jx\rrvert}^{d+1}}\chi\pose{\frac{t^2\lambda }{2^{j+\ell}}}\llvert f\pose{\xi}\rrvert d\xi.\]
For a fixed $\lambda>0$, $j\geq1$ and $\ell\geq1$, there are only finite numbers of $k$ satisfying the support condition of $\chi\pose{2^k t}\chi\pose{\frac{t^2\lambda}{2^{j+\ell}}}$. Thus, to show \eqref{aaaaa2}, it suffices to show that
\begin{align}
\sup_{\lambda>0}\sup_{k\in\Z}\llVert 2^{\frac{k}{2}}W_{j,\ell,k}^{\lambda}\rrVert_{op}&\leq C  2^{-\ell\pose{\frac{d}{2}}}.\label{aaaaa4}
\end{align}
Since the dilation structures of $\tilde{\MS}_{j,\ell,k}^{\lambda}$ in Lemma \ref{dilationop} and $2^{\frac{k}{2}}W_{j,\ell,k}^{\lambda}$ work in the same way, we derive the following relation by the same method in Lemma \ref{dilationop}:
\begin{align*}
\llVert 2^{\frac{k}{2}}W_{j,\ell,k}^{\lambda}\rrVert_{op}=\llVert W_{j,\ell,0}^{\lambda2^{-2k}}\rrVert_{op}.
\end{align*}
By the above dilation property, one can see that
\[\sup_{\lambda>0}\sup_{k\in\Z}\llVert 2^{\frac{k}{2}}W_{j,\ell,k}^{\lambda}\rrVert_{op}\leq\sup_{\lambda>0}\llVert W_{j,\ell,0}^{\lambda}\rrVert_{op}.\]
We finish by using Schur's test for $\llVert W_{j,\ell,0}^{\lambda}\rrVert_{op}$:
\begin{align*}
\int_{0}^{\infty}\int_{\R^d}\frac{\chi\pose{t}}{\pose{1+t\llvert \xi+\lambda Jx\rrvert}^{d+1}}\chi\pose{\frac{t^2\lambda }{2^{j+\ell}}}dxdt\leq C \lambda^{-d}\chi\pose{\frac{\lambda }{2^{j+\ell}}}&\lesssim2^{-jd-d\ell},\\
\int_{\R^d}\frac{\chi\pose{t}}{\pose{1+t\llvert \xi+\lambda Jx\rrvert}^{d+1}}\chi\pose{\frac{t^2\lambda }{2^{j+\ell}}}d\xi&\leq C .
\end{align*}
Thus, by Schur's test, we have $\llVert W_{j,\ell,0}^{\lambda}\rrVert_{op}\lesssim2^{-\ell\pose{\frac{d}{2}}}$ and the desired estimate \eqref{aaaaa4} holds.
\end{proof}

\newpage
\bibliographystyle{plain}
\bibliography{ref98}

\end{document}